\newtheorem{thm}{Theorem}[section]
\newtheorem{lemma}[thm]{Lemma}
\newtheorem{proposition}[thm]{Proposition}
\newtheorem{definition}[thm]{Definition}
\newtheorem{corollary}[thm]{Corollary}
\newtheorem{assumption}[thm]{Assumption}
\newtheorem*{maintheorem}{Main Theorem}
\newcommand{\p}{\mathbb{P}}
\newcommand{\q}{\mathbb{Q}}
\newcommand{\con}{\ \widehat{} \ }
\newcommand{\cf}{\mathrm{cf}}
\newcommand{\cof}{\mathrm{cof}}
\newcommand{\dom}{\mathrm{dom}}
\newcommand{\ran}{\mathrm{ran}}
\newcommand{\add}{\textrm{Add}}
\newcommand{\coll}{\textrm{Col}}
\newcommand{\lh}{\mathrm{lh}}
\newcommand{\suc}{\mathrm{Suc}}
\begin{document}

\title{Namba forcing, weak approximation, and guessing}

\author{Sean Cox and John Krueger}

\address{Sean Cox \\ Department of Mathematics and Applied Mathematics \\
Virginia Commonwealth University \\ 
1015 Floyd Avenue \\ 
PO Box 842014 \\ 
Richmond, Virginia 23284}
\email{scox9@vcu.edu}

\address{John Krueger \\ Department of Mathematics \\ 
University of North Texas \\
1155 Union Circle \#311430 \\
Denton, TX 76203}
\email{jkrueger@unt.edu}

\date{September 2016; revised May 2018}

\thanks{2010 \emph{Mathematics Subject Classification:} 
03E05, 03E35, 03E40, 03E65.}

\thanks{\emph{Key words and phrases.} Weak $\omega_1$-appoximation 
property, weak $\omega_1$-guessing, \textsf{wGMP}, 
forcing axioms, Namba forcing.}

\thanks{The work on this paper began while the authors attended the 
``High and low forcing'' workshop at the American Institute of 
Mathematics in January 2016. 
The first author was partially supported by the 
Simons Foundation grant 318467 
and the VCU Presidential Research Quest Fund. 
The second author was partially supported by the National 
Science Foundation Grant No.\ DMS-1464859.}

\begin{abstract}
We prove a variation of Easton's lemma for strongly proper forcings, 
and use it to prove that, unlike the stronger principle 
\textsf{IGMP}, \textsf{GMP} together with $2^\omega \le \omega_2$ 
is consistent with the existence of an 
$\omega_1$-distributive nowhere c.c.c.\! forcing poset of size $\omega_1$. 
We introduce the idea of a weakly guessing model, 
and prove that many of the strong consequences of the principle 
$\textsf{GMP}$ follow from the existence of stationarily many 
weakly guessing models. 
Using Namba forcing, we construct a model in which there are 
stationarily many indestructibly weakly guessing models which have a 
bounded countable subset not covered by any countable set in the model.
\end{abstract}

\maketitle

Weiss \cite{weiss} introduced the combinatorial principle $\textsf{ISP}(\kappa)$, 
which characterizes supercompactness in the case that $\kappa$ is inaccessible, 
but is also consistent for small values of $\kappa$ such as $\omega_2$. 
The principle $\textsf{ISP}(\omega_2)$ follows from \textsf{PFA}, and 
$\textsf{ISP}(\omega_2)$ implies some of the strong consequences of \textsf{PFA}, 
such as the failure of the square principle at all uncountable cardinals. 
Viale-Weiss \cite{vialeweiss} introduced the idea of an $\omega_1$-guessing 
model, and proved that $\textsf{ISP}(\omega_2)$ is equivalent to the existence 
of stationarily many $\omega_1$-guessing models in $P_{\omega_2}(H(\theta))$, 
for all cardinals $\theta \ge \omega_2$.

Viale \cite{viale} proved that the singular cardinal hypothesis 
(\textsf{SCH}) follows from the 
existence of stationarily many $\omega_1$-guessing models which are also 
internally unbounded, which means that any countable subset of the model is 
covered by a countable set in the model. 
This raises the question of whether $\textsf{ISP}(\omega_2)$ alone implies \textsf{SCH}. 
A closely related question of Viale \cite[Remark 4.3]{viale} is 
whether it is consistent to have 
$\omega_1$-guessing models which are not internally unbounded. 
Much of the work in this paper was motivated by these two questions.

In this paper we introduce a weak form of $\omega_1$-guessing. 
Let $\kappa$ be a regular uncountable cardinal. 
A model $N$ of size $\omega_1$ with $\kappa \in N$ 
is said to be \emph{weakly $\kappa$-guessing} 
if whenever $f : \sup(N \cap \kappa) \to On$ is a function such that for cofinally many 
$\alpha < \sup(N \cap \kappa)$, $f \restriction \alpha \in N$, 
then there is a function $g \in N$ with domain $\kappa$ such that 
$g \restriction \sup(N \cap \kappa) = f$. 
We say that $N$ is \emph{weakly guessing} if $N$ is weakly $\kappa$-guessing 
for all regular uncountable cardinals $\kappa \in N$. 
We will show that the existence of stationarily many weakly guessing 
models suffices to prove most of the strong consequences of 
$\textsf{ISP}(\omega_2)$, including the failure of square principles.

By passing through the idea of a weakly $\omega_1$-guessing model, 
we solve an easy special case of the problem of Viale stated above, showing that 
the existence of $\omega_1$-guessing models implies the existence of 
$\omega_1$-guessing models $N$ such that $\sup(N \cap On)$ has 
cofinality $\omega$, 
and in particular, $N$ is not internally unbounded. 
This result suggests a refined version of Viale's question: 
is every \emph{bounded} 
countable subset of an $\omega_1$-guessing model covered by a countable set in the model? 
If the answer is yes, then $\textsf{ISP}(\omega_2)$ does indeed imply SCH. 
This problem remains open.

The main result of this paper is the consistency that 
there are stationarily many 
indestructibly weakly guessing models $N \in P_{\omega_2}(H(\aleph_{\omega+1}))$ 
for which there is a countable subset of $N \cap \aleph_\omega$ which is not 
covered by any countable set in $N$. 
This result can be thought of as a first attempt towards proving that 
$\textsf{ISP}(\omega_2)$ does not imply \textsf{SCH}. 
The proof involves constructing a model in which there exists a diagonal form 
of Namba forcing which satisfies a weak version of the 
$\omega_1$-approximation property.

\begin{maintheorem}
	It is consistent relative to the existence of a supercompact cardinal 
	with infinitely many measurable cardinals above it that 
	there exist stationarily many $N \in P_{\omega_2}(H(\aleph_{\omega+1}))$ 
	such that $N$ is indestructibly weakly guessing, has uniform cofinality $\omega_1$, 
	and is not internally unbounded.	
	\end{maintheorem}

Before starting the main line of results concerning weakly guessing models 
and covering, we begin the paper by proving 
a variation of the classical Easton's lemma for strongly proper forcing: 
if $\p$ is strongly proper on a stationary set and $\q$ is 
$\omega_1$-closed, then $\p$ forces that $\q$ is $\omega_1$-distributive. 
As a corollary, we will show that a certain combinatorial principle known to follow 
from the existence of an indestructible version of an $\omega_1$-guessing model 
does not follow from the existence of $\omega_1$-guessing models.

\bigskip

We give a brief outline of the contents of the paper. 
Section 1 describes some background material 
which will be needed to understand the paper. 
Section 2 presents our strongly proper variation of Easton's lemma.

Section 3 defines weak approximation and weak guessing, and proves that 
many of the strong consequences of $\textsf{ISP}(\omega_2)$ 
follow from the existence of 
stationarily many weakly guessing models. 
Section 4 solves a problem of Viale \cite{viale} by showing 
the consistency that 
there are stationarily many $\omega_1$-guessing models which are not 
internally unbounded. 

Section 5 shows that the method of Viale-Weiss \cite{vialeweiss} for applying 
forcing axioms to prove the existence of $\omega_1$-guessing models 
can be adapted to the case of weakly guessing models. 
Section 6 develops a version of Namba forcing 
which has the weak approximation property. 
Section 7 proves the main theorem of the paper, 
showing that it is consistent that there exist 
stationarily many indestructibly weakly guessing models which have a 
bounded countable subset not covered by any countable set in the model.

\bigskip

We would like to thank Thomas Gilton for carefully proofreading 
several drafts of this paper and making many useful suggestions.

\section{Background}

We assume that the reader has a working knowledge of forcing, 
proper forcing, product forcing, the product lemma, 
finite step forcing iterations, and generalized stationarity. 
The reader should be familiar with trees of height $\omega_1$ and 
with the standard $\omega_1$-c.c.\ forcing poset for adding a specializing function 
to a tree with no uncountable chains. 

For a regular uncountable cardinal $\kappa$, we say that 
a forcing poset 
is \emph{$\kappa$-closed} if any descending sequence of conditions of 
length less than $\kappa$ has a lower bound. 
We say that a forcing poset is \emph{$\kappa$-distributive} if it does 
not add new sets of ordinals of size less than $\kappa$.

\begin{definition}
Let $N$ be a set with $|N| = \omega_1$ and 
$\omega_1 \subseteq N$. 
We say that $N$ has \emph{bounded uniform cofinality $\omega_1$} if for all 
$\alpha \in N$, if $N \models \cf(\alpha) > \omega$ then 
$\cf(\sup(N \cap \alpha)) = \omega_1$. 
If in addition we have that $\cf(\sup(N \cap On)) = \omega_1$, then we say that 
$N$ has \emph{uniform cofinality $\omega_1$}.
\end{definition}

Note that if $N \prec H(\theta)$ for some cardinal 
$\theta \ge \omega_2$, 
then $N$ has bounded uniform cofinality $\omega_1$ iff for every 
regular uncountable cardinal $\lambda \in N$, 
$\cf(\sup(N \cap \lambda)) = \omega_1$.

A set of ordinals $a$ is said to be \emph{countably closed} if 
every limit point of $a$ with countable cofinality is in $a$.

The following fact is well-known.

\begin{lemma}
Let $N$ be a set with $|N| = \omega_1$ and 
$\omega_1 \subseteq N$. 
Suppose that $N$ is an elementary substructure of $H(\theta)$ for 
some cardinal $\theta \ge \omega_2$ 
and $N$ has bounded uniform cofinality $\omega_1$. 
Then for any ordinal $\alpha \in N$ with uncountable cofinality, 
$N \cap \alpha$ is countably closed.
\end{lemma}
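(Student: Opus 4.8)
The plan is to prove this directly. Fix $\alpha \in N$ with $\cf(\alpha) > \omega$, and let $\beta$ be a limit point of $N \cap \alpha$ with $\cf(\beta) = \omega$; the goal is to show that $\beta \in N \cap \alpha$. First I would dispose of trivialities: if $\beta < \omega_1$ then $\beta \in \omega_1 \subseteq N$, so I may assume $\beta \ge \omega_1$. Since $H(\theta)$ computes the cofinality of $\alpha$ correctly and $N \prec H(\theta)$, elementarity gives $N \models \cf(\alpha) > \omega$, so bounded uniform cofinality applied to $\alpha$ yields $\cf(\sup(N \cap \alpha)) = \omega_1$. As $\beta \le \sup(N \cap \alpha) \le \alpha$ and $\cf(\beta) = \omega \ne \omega_1$, it follows that $\beta < \sup(N \cap \alpha) \le \alpha$; in particular $\beta < \alpha$ and $\sup(N \cap \beta) = \beta$.

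The heart of the argument is to examine the ``gap'' ordinal $\gamma := \min(N \setminus \beta)$, which exists since $\alpha \in N \setminus \beta$. If $\gamma = \beta$ we are done, so suppose $\beta < \gamma$. Then $N \cap [\beta,\gamma) = \emptyset$, hence $N \cap \gamma = N \cap \beta$ and therefore $\sup(N \cap \gamma) = \sup(N \cap \beta) = \beta$. I would next check that $\gamma$ is a limit ordinal: were $\gamma = \delta + 1$, elementarity would put $\delta \in N$ with $\beta \le \delta < \gamma$, contradicting the minimality of $\gamma$. Thus $N \models$ ``$\cf(\gamma)$ is a regular cardinal'', and if $N \models \cf(\gamma) > \omega$ then bounded uniform cofinality applied to $\gamma \in N$ would give $\cf(\sup(N \cap \gamma)) = \omega_1$, i.e.\ $\cf(\beta) = \omega_1$, contradicting $\cf(\beta) = \omega$. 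Hence $N \models \cf(\gamma) = \omega$.

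To finish, by elementarity there is a cofinal function $h : \omega \to \gamma$ with $h \in N$; since $\omega \subseteq N \prec H(\theta)$, each value $h(n)$ lies in $N \cap \gamma = N \cap \beta \subseteq \beta$, so $\ran(h) \subseteq \beta < \gamma$, contradicting that $\ran(h)$ is cofinal in $\gamma$. Therefore $\beta = \gamma \in N$, which is what we wanted.

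I do not expect a serious obstacle here, as the argument is elementary. The one point requiring care is the identification of $\gamma = \min(N \setminus \beta)$ together with the verification that $\gamma$ is a limit ordinal of $N$-cofinality $\omega$ — this is precisely the configuration that collides with $N$'s bounded uniform cofinality, via a cofinal $\omega$-sequence through $\gamma$ whose range is necessarily trapped below $\beta$.
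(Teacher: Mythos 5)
Your proof is correct and follows essentially the same route as the paper: isolate $\xi=\min$ of the ordinals of $N$ at or above $\beta$, note $\sup(N\cap\xi)=\beta$, and derive a contradiction by splitting on whether $N$ thinks $\cf(\xi)$ is countable (elementarity traps a cofinal $\omega$-sequence below $\beta$) or uncountable (bounded uniform cofinality forces $\cf(\beta)=\omega_1$). The only differences are cosmetic: you handle the trivial case $\beta<\omega_1$ and the successor case of $\gamma$ explicitly, and you run the two contradictions in the opposite order from the paper.
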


\begin{proof}
Let $\beta$ be a limit point of $N \cap \alpha$ with countable cofinality, 
and we will show that $\beta \in N \cap \alpha$. 
Since $\cf(\sup(N \cap \alpha)) = \omega_1$, $\beta < \sup(N \cap \alpha)$. 
Let $\xi := \min((N \cap \alpha) \setminus \beta)$. 

Suppose for a contradiction that $\beta \notin N$. 
Then $\beta < \xi$ and $N \cap \xi = N \cap \beta$. 
Clearly $\cf(\xi) > \omega$, for otherwise by elementarity 
$N \cap \xi$ would be cofinal in $\xi$. 
Since $N$ has bounded uniform cofinality $\omega_1$, 
$\cf(\sup(N \cap \xi)) > \omega$. 
But $\sup(N \cap \xi) = \beta$ and $\cf(\beta) = \omega$, which is 
a contradiction.
\end{proof}

\begin{definition}
Let $N$ be an uncountable set with $\omega_1 \subseteq N$. 
We say that $N$ is \emph{$\omega_1$-guessing} 
if for any set of ordinals 
$d \subseteq N$ such that $\sup(d) < \sup(N \cap On)$, 
if $d$ satisfies that for any 
countable set $b \in N$, 
$d \cap b \in N$, then there exists $d' \in N$ 
such that $d = d' \cap N$.
\end{definition}

\begin{definition}
Let $W_1$ and $W_2$ be transitive sets or classes 
with $W_1 \subseteq W_2$. 
We say that the pair $(W_1,W_2)$ has the 
\emph{$\omega_1$-approximation property} 
if whenever $d \in W_2$ is a bounded 
subset of $W_1 \cap On$ and satisfies that $b \cap d \in W_1$ 
for any set $b \in W_1$ which is countable in $W_1$, 
then $d \in W_1$.
\end{definition}

\begin{lemma}
Let $N$ be an elementary substructure of some transitive structure 
which satisfies $\textsf{ZFC} - \textrm{Powerset}$ and correctly 
computes $\omega_1$. 
Then the following are equivalent:
\begin{enumerate}
\item $N$ is $\omega_1$-guessing;
\item the pair $(N_0,V)$ has the $\omega_1$-approximation 
property, where $N_0$ is the transitive collapse of $N$.
\end{enumerate}
\end{lemma}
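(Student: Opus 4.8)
The plan is to prove the equivalence by carefully translating between the "external" language of $N$ as a submodel of $V$ and the "internal" language of its transitive collapse $N_0$. Let $\pi : N \to N_0$ be the transitive collapse isomorphism. The key observation is that $\pi$ fixes every ordinal below $\omega_1$ (since $\omega_1 \subseteq N$ and $\omega_1$ is computed correctly), and more generally $\pi(\alpha) = \ot(N \cap \alpha)$ for ordinals $\alpha \in N$, while for a countable set $b \in N$ we have $\pi(b) = \pi[b] = b \cap N$ — in particular $b \subseteq N$ whenever $b$ is a set which $N$ (equivalently $N_0$) thinks is countable.

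First I would prove $(1) \Rightarrow (2)$. Suppose $d \in V$ is a bounded subset of $N_0 \cap On$ with $b \cap d \in N_0$ for every $b \in N_0$ that $N_0$ thinks is countable; I want $d \in N_0$. Set $\bar d := \pi^{-1}[d] = \{\,\text{the unique } x \in N \text{ with } \pi(x) \in d\,\}$, which makes sense because $d \subseteq N_0$. Then $\bar d$ is a set of ordinals contained in $N$, and $\sup(\bar d) < \sup(N \cap On)$ follows from $d$ being bounded in $N_0 \cap On$ together with the fact that $\pi$ is an order-isomorphism. I then check the guessing hypothesis: given a countable set $c \in N$, we have $\pi(c) \in N_0$ is countable in $N_0$, so $\pi(c) \cap d \in N_0$ by assumption; pulling back through $\pi$ and using that $c \subseteq N$ for countable $c \in N$, one gets $c \cap \bar d \in N$. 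By $(1)$ there is $d' \in N$ with $\bar d = d' \cap N$, and then $\pi(d') = \pi[d' \cap N] = \pi[\bar d] = d \in N_0$ — here the second-to-last equality uses that $d' \cap N$ is exactly $\bar d$ so $\pi$ maps it onto $d$. This gives $(2)$.

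Next I would prove $(2) \Rightarrow (1)$, which is essentially the same computation run backwards. Let $d \subseteq N$ be a set of ordinals with $\sup(d) < \sup(N \cap On)$ such that $d \cap b \in N$ for every countable $b \in N$; I want $d' \in N$ with $d = d' \cap N$. Put $\hat d := \pi[d]$, a subset of $N_0 \cap On$. Boundedness of $d$ in $N \cap On$ translates to boundedness of $\hat d$ in $N_0 \cap On$ (again since $\pi$ is order-preserving and, crucially, $\sup(d) < \sup(N\cap On)$ ensures $\hat d$ is genuinely bounded inside the collapsed ordinals rather than cofinal). For the approximation hypothesis, take $b \in N_0$ countable in $N_0$; then $\pi^{-1}(b) \in N$ is countable in $N$, hence a subset of $N$, so $d \cap \pi^{-1}(b) \in N$, and applying $\pi$ gives $\hat d \cap b = \pi[d \cap \pi^{-1}(b)] \in N_0$. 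By $(2)$ we conclude $\hat d \in N_0$, so $d' := \pi^{-1}(\hat d) \in N$, and $d' \cap N = \pi^{-1}[\hat d] = d$ since $\pi$ restricted to $d$ is a bijection onto $\hat d$.

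The step I expect to be the main (though still routine) obstacle is keeping straight the bookkeeping around boundedness and around the clause "$b$ countable in $N$ versus countable in $N_0$." The definition of $\omega_1$-guessing quantifies over countable sets $b \in N$, while the $\omega_1$-approximation property quantifies over sets $b \in N_0$ that are countable \emph{in} $N_0$; the point is that $\pi$ matches these two classes because $N$ computes $\omega_1$ correctly, so "$N \models b \text{ countable}$" is equivalent to "$N_0 \models \pi(b) \text{ countable}$" is equivalent to "$\pi(b)$ is genuinely countable." One must also note that the hypothesis $\sup(d) < \sup(N \cap On)$ in the definition of $\omega_1$-guessing is exactly what corresponds to the word "bounded" in the approximation property after collapsing, since $\pi$ is an $\in$-isomorphism onto a transitive set; this is the only place the boundedness clauses of the two definitions get linked, and it is worth stating explicitly. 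Everything else is a direct verification that $\pi$ and $\pi^{-1}$ transport the relevant objects and their defining properties back and forth.
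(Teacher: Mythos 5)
Your proof is correct and is essentially the standard argument: the paper itself gives no proof of this lemma, deferring to \cite[Lemma 1.10]{jk26}, and your back-and-forth translation through the Mostowski collapse $\pi$ (with the two key bookkeeping points you isolate: matching ``countable in $V$'' for $b \in N$ with ``countable in $N_0$'' via correctness of $\omega_1$, and matching $\sup(d) < \sup(N \cap On)$ with boundedness in $N_0 \cap On$) is the same method the paper uses for the weak-guessing analogue in Lemma 3.5. One slip in your preliminary remarks: for countable $b \in N$ you do have $b \subseteq N$ and hence $\pi(b) = \pi[b]$, but the further claim $\pi[b] = b \cap N$ is false in general (take $b = \{\omega_2\}$ with $\omega_2 \in N$); this is harmless, since your actual steps only ever use $b \subseteq N$, $\pi(b) = \pi[b]$, and injectivity of $\pi$.
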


\begin{proof}
See the proof of \cite[Lemma 1.10]{jk26}.
\end{proof}

\begin{definition}
A forcing poset $\p$ is said to have the 
\emph{$\omega_1$-approximation property} if 
$\p$ forces that $(V,V^\p)$ has the $\omega_1$-approximation property.
\end{definition}

For a set $M$ and a filter $G$ on a forcing poset $\p$, we say that 
$G$ is \emph{$M$-generic} if for any dense subset $D$ of $\p$ which is 
a member of $M$, $G \cap M \cap D \ne \emptyset$.

\begin{definition}
Let $\p$ be a forcing poset and $N$ a set with $N \cap \p \ne \emptyset$. 
A condition $q$ is said to be \emph{strongly $(N,\p)$-generic} if 
for any dense subset $D$ of the forcing poset $N \cap \p$, 
$D$ is predense below $q$. 
\end{definition}

If $\p$ is understood from context, we say that $q$ is strongly $N$-generic 
if $q$ is strongly $(N,\p)$-generic.

Under some non-triviality assumptions on $N$, it is easy to check that 
$q$ is strongly $(N,\p)$-generic iff $q$ forces that $\dot G_\p \cap N$ 
is a $V$-generic filter on $N \cap \p$. 
For example, this is true if any conditions $s$ and $t$ in $N \cap \p$ 
which are compatible in $\p$ are also compatible in $N \cap \p$.

\begin{definition}
Let $\p$ be a forcing poset and $N$ a set. 
We say that $\p$ is \emph{strongly proper for $N$} if 
for all $p \in N \cap \p$, there is $q \le p$ such that 
$q$ is strongly $(N,\p)$-generic.
\end{definition}

\begin{definition}
Let $\kappa$ be a regular uncountable cardinal. 
A forcing poset $\p$ is \emph{$\kappa$-strongly proper on a stationary 
set} if for all sufficiently large cardinals $\chi \ge \kappa$ with 
$\p \in H(\chi)$, there are stationarily many $N \in P_{\kappa}(H(\chi))$ 
with $N \cap \kappa \in \kappa$ such that $\p$ is strongly proper for $N$. 
We say that $\p$ is \emph{$\kappa$-strongly proper} if the stationary set just 
described contains a club.
\end{definition}

A forcing poset  $\p$ is \emph{strongly proper on a stationary set} 
if $\p$ is $\omega_1$-strongly proper on a stationary set, and 
$\p$ is \emph{strongly proper} if $\p$ is $\omega_1$-strongly proper. 
The ideas of strong genericity and strongly proper are due to Mitchell.

As discussed in \cite[Section 2]{jk26}, letting 
$\lambda_\p$ be the first cardinal greater than or 
equal to $\kappa$ such that $\p \subseteq H(\lambda_\p)$, 
$\p$ is $\kappa$-strongly proper on a stationary set 
iff there are stationarily many $N \in P_{\kappa}(H(\lambda_\p))$ 
with $N \cap \kappa \in \kappa$ 
such that $\p$ is strongly proper for $N$.

The following well-known result is due to Mitchell; 
see \cite[Lemma 6]{mitchell2}.

\begin{thm}
Let $\kappa$ be a regular uncountable cardinal, and assume that 
$\p$ is $\kappa$-strongly proper on a stationary set. 
Then $\p$ forces that for any set of ordinals $X$, if 
$X \cap a \in V$ for any set $a \in V$ of size less than $\kappa$ in $V$, 
then $X \in V$. 

In particular, if $\p$ is strongly proper on a stationary set, 
then $\p$ has the $\omega_1$-approximation property.
\end{thm}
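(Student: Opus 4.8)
The plan is to run Mitchell's argument; see \cite[Lemma 6]{mitchell2}. First I would put the hypothetical counterexample into a normal form. Assume the conclusion fails, so some condition forces $\dot X$ to be a set of ordinals every $<\kappa$-sized $V$-approximation of which lies in $V$, yet $\dot X \notin V$. Since $\dot X^{\dot G_\p}$ is bounded in each generic extension and, for each ordinal $\gamma$, the name $\dot X \cap \check\gamma$ inherits the approximation hypothesis, I may fix an ordinal $\lambda$ and a condition $p'$ with $p' \Vdash$ ``$\dot X \subseteq \check\lambda$, and $\dot X \cap \check\gamma \in \check V$ for every $\gamma < \lambda$, but $\dot X \notin \check V$'', taking $\lambda$ least with this property; then a short check shows $\lambda$ is a limit ordinal and $p'$ forces $\dot X$ to be cofinal in $\lambda$.

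Next, fix a sufficiently large regular $\chi$ and, using that $\p$ is $\kappa$-strongly proper on a stationary set, choose $N \prec H(\chi)$ with $\{\p,\dot X,p',\lambda\} \subseteq N$, $|N| < \kappa$, $N \cap \kappa \in \kappa$, and $\p$ strongly proper for $N$; let $q \le p'$ be strongly $(N,\p)$-generic. The first point is that $q$ forces $\dot H := \dot G_\p \cap (N \cap \p)$ to be a $V$-generic filter on the forcing poset $N \cap \p$, which has size $<\kappa$: it meets every dense subset of $N \cap \p$ because $q$ is strongly generic, and a short density argument in $N \cap \p$, together with the fact (from $N \prec H(\chi)$) that two members of $N \cap \p$ compatible in $\p$ are compatible in $N \cap \p$, shows it is a directed filter. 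Now a routine pigeonhole argument shows that any forcing of size $<\kappa$ has the $\kappa$-approximation property: for each member $a$ of a cofinal family of $<\kappa$-sized subsets of the relevant bound choose a condition deciding $\dot X \cap \check a$; as there are fewer than $\kappa$ conditions and the family is $\kappa$-directed, one of them decides $\dot X$ on a cofinal subfamily, and the decided pieces cohere into a single set in $V$. Hence it suffices to show that $q$ forces $\dot X^{\dot G_\p} \in V[\dot H]$.

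For each $\alpha \in N$ the conditions of $N \cap \p$ deciding ``$\check\alpha \in \dot X$'' are dense in $N \cap \p$ --- given $u \in N \cap \p$, elementarity of $N$ (with $u,\alpha,\dot X,\p \in N$) produces an extension of $u$ inside $N$ deciding it --- hence predense below $q$; so $q$ forces that $\dot H$ decides ``$\check\alpha \in \dot X$'' consistently for each $\alpha \in N$, and therefore $q$ forces $\dot X \cap N$ to be computable from $\dot H$. If $\cf(\lambda) < \kappa$ this already finishes the proof: $N \cap \lambda$ is then cofinal in $\lambda$ (because $N$ contains a cofinal map from $\cf(\lambda)$ to $\lambda$, and $\cf(\lambda) \subseteq N$), so $\dot X^{\dot G_\p} = \bigcup\{\dot X^{\dot G_\p} \cap \gamma : \gamma \in N \cap \lambda\}$ is computed from $\dot H$, and we are done by the previous paragraph.

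The main case is $\cf(\lambda) \ge \kappa$. Then $\delta := \sup(N \cap \lambda) < \lambda$, and the normal form says exactly that $p'$ forces $\dot X^{\dot G_\p}$ to be a \emph{fresh} cofinal subset of $\lambda$, one with all proper initial segments --- but not itself --- in $V$. The hard part, and the technical heart of Mitchell's lemma, is to rule this out by showing that $q$ forces $\dot X \cap [\check\delta,\check\lambda) \in \check V$; since $\dot X \cap \check\delta$ is forced into $V$ by freshness, this yields $q \Vdash \dot X \in \check V$, contradicting $q \le p'$. This is where strong genericity is genuinely needed: one reduces to $N \cap \p$ the conditions that decide initial segments of $\dot X$ and runs a pigeonhole argument inside $N \cap \p$, exploiting that $\dot X$ is forced cofinal in $\lambda$ and fresh. (Ordinary properness does not suffice here: Namba forcing and Prikry forcing add fresh --- hence, at the relevant cofinalities, $\kappa$-approximated --- subsets of ordinals that are not in the ground model.) Finally, the ``in particular'' clause is just the instance $\kappa = \omega_1$, where $\omega_1$-strongly proper on a stationary set is strongly proper on a stationary set and the $\omega_1$-approximation property is the $\omega_1$-approximation property.
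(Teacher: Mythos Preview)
The paper does not prove this theorem; it cites it as a known result of Mitchell, so there is no in-paper proof to compare against. Your attempt, however, has a genuine gap in the case $\cf(\lambda) \ge \kappa$, which you yourself flag as the crux. You assert only that ``one reduces to $N \cap \p$ the conditions that decide initial segments of $\dot X$ and runs a pigeonhole argument inside $N \cap \p$'', but this is not an argument. Initial segments $\dot X \cap \check\gamma$ for $\gamma \in N$ reach only up to $\delta = \sup(N \cap \lambda)$ and say nothing about $\dot X \cap [\delta,\lambda)$; and reducing to $N \cap \p$ conditions that decide $\dot X \cap \check\gamma$ for $\gamma \notin N$ produces traces with no compatibility relationship to exploit --- sharing a trace does not make two conditions compatible. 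I do not see how your sketch can be completed as written. (Separately, your pigeonhole sketch that size-$<\kappa$ forcing has the $\kappa$-approximation property is also incomplete: the subfamily on which a fixed condition decides need not be $\subseteq$-cofinal in $[\lambda]^{<\kappa}$. The claim is true, but it needs a different proof.)

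The detour through a minimal $\lambda$, freshness, and a case split on $\cf(\lambda)$ is in any case unnecessary. Mitchell's argument is direct: with $N$, $a := N \cap \lambda$, and strongly $(N,\p)$-generic $q \le p$ as you set up, extend $q$ to some $q'$ deciding $\dot X \cap \check a = \check b$, and take a reduction $t \le p$ in $N \cap \p$ of $q'$ (so every $s \le t$ in $N \cap \p$ is compatible with $q'$; such $t$ exists because $q'$ is still strongly $(N,\p)$-generic). Since $p \Vdash \dot X \notin \check V$ and $t, p, \dot X \in N$, elementarity yields $t_0, t_1 \le t$ in $N \cap \p$ and $\alpha \in N$ with $t_0 \Vdash \check\alpha \in \dot X$ and $t_1 \Vdash \check\alpha \notin \dot X$. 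But $\alpha \in a$, so $q'$ has already decided whether $\alpha \in \dot X$; hence one of $t_0, t_1$ is incompatible with $q'$, contradicting the reduction property of $t$. No cofinality analysis is needed.
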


Weiss introduced principles $\textsf{ITP}(\kappa)$ and 
$\textsf{ISP}(\kappa)$ which give a combinatorial characterization 
of supercompactness in the case that $\kappa$ is inaccessible, but also 
make sense when $\kappa$ is a small cardinal such as $\omega_2$. 
We refer the reader to \cite[Section 2]{weiss} for the definitions. 
The principle $\textsf{ISP}(\kappa)$ implies $\textsf{ITP}(\kappa)$, 
and $\textsf{ISP}(\omega_2)$ follows from \textsf{PFA}.

The following theorem was proved in \cite[Section 3]{vialeweiss}.

\begin{thm}
The principle $\textsf{ISP}(\omega_2)$ is equivalent to the statement 
that for all cardinals $\chi \ge \omega_2$, there are stationarily many 
$N \in P_{\omega_2}(H(\chi))$ such that $N$ is $\omega_1$-guessing.
\end{thm}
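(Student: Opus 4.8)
The plan is to prove both directions by identifying the combinatorial content of $\textsf{ISP}(\omega_2)$ with the $\omega_1$-approximation property for nice elementary submodels. Recall from \cite{weiss} that $\textsf{ISP}(\omega_2)$ asserts that for every cardinal $\lambda \ge \omega_2$ every \emph{slender} $P_{\omega_2}(\lambda)$-list $\langle d_a : a \in P_{\omega_2}(\lambda) \rangle$ has an ineffable branch, i.e.\ a set $d \subseteq \lambda$ with $\{a : d \cap a = d_a\}$ stationary; and that slenderness means that for all sufficiently large $\theta$ there is a club of $M \prec H(\theta)$ of size $\omega_1$ such that $d_{M \cap \lambda} \cap x \in M$ for every countable $x \in M$. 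The key step, which I would isolate as a lemma, is the translation: if $N \prec H(\chi)$ for large $\chi$, $|N| = \omega_1$, $\omega_1 \subseteq N$, $N \cap \omega_2 \in \omega_2$, and $N$ contains a slender list $D$ together with a club witnessing its slenderness, then $d_{N \cap \lambda}$ is a bounded subset of $N \cap On$ which is $\omega_1$-approximated with respect to $N$ (that is, $d_{N\cap\lambda}\cap b \in N$ for every countable $b \in N$); this is just unravelling the slenderness witness and the fact that a countable member of $N$ is a countable subset of $N$. Conversely, an unguessed $\omega_1$-approximated subset of $N \cap On$ is exactly the kind of object that can serve as a list entry.

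For the forward implication, assume there are stationarily many $\omega_1$-guessing $N \in P_{\omega_2}(H(\chi))$ for all $\chi$, and fix $\lambda \ge \omega_2$ and a slender $P_{\omega_2}(\lambda)$-list $D$. Choose $\chi$ large and an $\omega_1$-guessing $N \in P_{\omega_2}(H(\chi))$ with $\lambda$, $D$, and a slenderness-witnessing club all in $N$, and with $N \cap \omega_2 \in \omega_2$. By the lemma, $d_{N \cap \lambda}$ is $\omega_1$-approximated in $N$, and since $\lambda \in N$ and $\chi > \lambda$ it is bounded below $\sup(N \cap On)$; so guessing yields $d' \in N$ with $d_{N \cap \lambda} = d' \cap N$, and we set $d := d' \cap \lambda \in N$. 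A direct computation gives $d \cap (N \cap \lambda) = d_{N \cap \lambda}$. To see $d$ is an ineffable branch, suppose not; then $H(\chi)$ has a club $C \subseteq P_{\omega_2}(\lambda)$ with $d \cap a \ne d_a$ for all $a \in C$, and by elementarity such a $C$ exists in $N$. Since $C \in N$ and $N \cap \omega_2 \in \omega_2$, the standard fact about clubs in $P_{\omega_2}(\lambda)$ gives $N \cap \lambda \in C$, so $d \cap (N \cap \lambda) \ne d_{N \cap \lambda}$, a contradiction. Hence $\textsf{ISP}(\omega_2, \lambda)$ holds, and since $\lambda$ was arbitrary, so does $\textsf{ISP}(\omega_2)$.

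For the reverse implication, assume $\textsf{ISP}(\omega_2)$ and fix a cardinal $\chi \ge \omega_2$; we may assume $\chi$ is regular, replacing it by $\chi^+$ and reflecting down through a model containing $H(\chi)$ otherwise. Suppose toward a contradiction that the $\omega_1$-guessing models are not stationary in $P_{\omega_2}(H(\chi))$, as witnessed by a structure $\mathfrak{A}$ expanding $(H(\chi), \in)$. Fix a bijection $\pi$ of $H(\chi)$ with $\lambda := |H(\chi)|$ which is the identity below $\omega_1$, add $\pi$ to $\mathfrak{A}$, and transfer everything to $P_{\omega_2}(\lambda)$. Define a list by setting $d_a$ to be the $\pi$-image of a chosen witness to the failure of guessing for $N := \pi^{-1}[a]$ (a bounded, $\omega_1$-approximated, but unguessed subset of $N \cap On$) whenever $N \prec \mathfrak{A}$ with $\omega_1 \subseteq N$ and $N \cap \omega_2 \in \omega_2$, and $d_a := \emptyset$ otherwise. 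This list is slender, essentially by the lemma read in reverse: for $M$ elementary in a large enough structure with $a := M \cap \lambda$ of the relevant form, the fact that the chosen witness is $\omega_1$-approximated in $\pi^{-1}[a]$ is precisely the slenderness condition $d_a \cap x \in a \subseteq M$ for all countable $x \in a$, and every countable $x \in M$ that is a subset of $\lambda$ lies in $a$. By $\textsf{ISP}(\omega_2)$ there is an ineffable branch $d^* \subseteq \lambda$; let $d := \pi^{-1}[d^*] \subseteq H(\chi)$. Adding $d$ and $d^*$ as constants to $\mathfrak{A}$, intersect the stationary set $\{a : d^* \cap a = d_a\}$ with the club of $a = \pi[N]$ for $N \prec \mathfrak{A}$ with $d \in N$, $\omega_1 \subseteq N$, $N \cap \omega_2 \in \omega_2$, to obtain at least one such $N$. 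For it, $d \in N$ and $d \cap N$ equals the chosen unguessed witness for $N$, so $d$ guesses that witness, contradicting the choice.

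The main obstacle, carrying essentially all of the content, is the lemma relating slenderness to the $\omega_1$-approximation property in both directions; once this is in hand, the two arguments reduce to elementarity together with the standard behaviour of clubs in $P_{\omega_2}(\lambda)$, and the bijective coding of $H(\chi)$ into an ordinal is routine. The remaining point requiring attention is ensuring that the models in play satisfy $N \cap \omega_2 \in \omega_2$, since this is exactly what makes each club-reflection step go through; one handles this by restricting the list in the reverse direction to such models and by choosing the guessing model with this property in the forward direction, noting that such models form a stationary set.
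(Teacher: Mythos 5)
The paper itself gives no proof of this theorem --- it simply cites Section 3 of Viale--Weiss --- and your plan follows exactly that route: translate between slender $P_{\omega_2}(\lambda)$-lists and the $\omega_1$-approximation property of $\omega_1$-sized elementary submodels, using a bijection of $H(\chi)$ with $\lambda = |H(\chi)|$ and the Menas-style fact that clubs in $P_{\omega_2}$ concentrate on $F$-closed sets $a$ with $a \cap \omega_2 \in \omega_2$. Your forward direction (stationarily many guessing models imply $\textsf{ISP}(\omega_2)$) is essentially correct as sketched, as is the construction of the list from non-guessing witnesses and the verification of slenderness in the reverse direction.

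The reverse direction, however, has a genuine gap at the final step. The ineffable branch $d^* \subseteq \lambda$ is in general unbounded in $\lambda = |H(\chi)|$, so $d := \pi^{-1}[d^*]$ is a subset of $H(\chi)$ of cardinality $|H(\chi)|$ and hence is \emph{not} an element of $H(\chi)$; consequently there is no $N \prec \mathfrak{A}$ with $d \in N$, and ``adding $d$ as a constant'' is not available (at best $d$ can be added as a unary predicate, which does not place it inside any model). Since a guess must be an \emph{element} of $N$, the identity $d \cap N = e_N$ alone yields no contradiction with $e_N$ being unguessed. This is exactly where the boundedness of the witnesses --- which you record when defining the list but never use --- must enter. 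For each $a$ in the stationary set $S$ of places where the branch is guessed, with $N_a := \pi^{-1}[a]$, the witness $e_{N_a}$ is contained in some $\beta_a \in N_a \cap On$; the map $a \mapsto \pi(\beta_a)$ is regressive on $S$ (intersected with the relevant club), so Fodor's lemma for $P_{\omega_2}(\lambda)$ gives a stationary $S' \subseteq S$ and a fixed $\beta$ with $e_{N_a} = (d \cap \beta) \cap N_a$ and $\beta \in N_a$ for all $a \in S'$. Now $d \cap \beta$ is a subset of $\beta$, hence hereditarily of size less than $\chi$, hence an element of $H(\chi)$; using stationarity of $S'$ once more, choose $a \in S'$ with $\pi(d \cap \beta) \in a$, i.e.\ $d \cap \beta \in N_a$. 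Then $d \cap \beta$ guesses $e_{N_a}$ inside $N_a$, contradicting the choice of the witness. Without this pressing-down argument (or some equivalent use of boundedness and a second appeal to stationarity), the reverse implication is not established.
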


For a cardinal $\lambda \ge \omega_2$, let $\textsf{GMP}(\lambda)$ 
be the statement that there exist stationarily many 
sets in $P_{\omega_2}(H(\lambda))$ which are $\omega_1$-guessing. 
Let $\textsf{GMP}$ be the statement that 
$\textsf{GMP}(\lambda)$ holds for all cardinals $\lambda \ge \omega_2$. 
By Theorem 1.11, $\textsf{GMP}$ is equivalent to 
$\textsf{ISP}(\omega_2)$. 

For a cardinal $\lambda \ge \omega_2$, let 
$\textsf{IGMP}(\lambda)$ be the statement that there 
exist stationarily many 
sets in $P_{\omega_2}(H(\lambda))$ which are $\omega_1$-guessing 
in any generic extension by an $\omega_1$-preserving forcing poset. 
Let $\textsf{IGMP}$ be the statement that 
$\textsf{IGMP}(\lambda)$ holds for all cardinals $\lambda \ge \omega_2$. 
We refer the reader to \cite{jk28} for more information about 
indestructible guessing models.

\section{A variation of Easton's lemma}

Before starting the main topic of the paper, we will prove in this 
section a strongly proper variation of Easton's lemma, together with 
a corollary which further distinguishes the principles 
\textsf{GMP} and \textsf{IGMP}.

By \emph{{Todor\v cevi\' c}'s maximality principle} we will 
mean the statement that any forcing poset which adds a new subset of 
$\omega_1$ whose proper initial segments are in the ground model 
collapses either $\omega_1$ or $\omega_2$. 
This statement was introduced by {Todor\v cevi\' c} \cite{todorcevic} 
and shown to follow from some combinatorial assumptions about trees.

In \cite[Theorem 3.9]{jk28} we proved that the principle 
\textsf{IGMP} together with 
$2^\omega \le \omega_2$ imply {Todor\v cevi\' c}'s maximality principle. 
We will show below that \textsf{IGMP} cannot be weakened to 
\textsf{GMP} in this result. 
The proof will use a new variation of the classical Easton's lemma.

\begin{thm}[Easton's lemma]
Suppose that $\kappa$ is a regular uncountable cardinal, $\p$ is 
$\kappa$-c.c., and $\q$ is $\kappa$-closed. 
Then $\p$ forces that $\q$ is $\kappa$-distributive.
\end{thm}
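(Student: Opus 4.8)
The plan is to argue in $V^\p$ that $\q$ remains $\kappa$-distributive by contradiction, using the $\kappa$-c.c.\ of $\p$ to reflect a potential new small sequence of ordinals back into the ground model. So suppose some $p \in \p$ forces that a $\q$-name $\dot f$ is a function from some $\delta < \kappa$ into the ordinals that is not in $V$. First I would fix, in $V$, a condition $q_0 \in \q$ below which this is forced (pulling $p$ and a name for $q_0$ together via the product $\p \times \q$, or equivalently working below a single condition in the two-step sense), and then build a descending sequence $\langle q_\xi : \xi < \delta \rangle$ in $\q$ deciding more and more of $\dot f$: at stage $\xi$ choose $q_{\xi+1} \le q_\xi$ deciding the value $\dot f(\xi)$, and at limit stages use $\kappa$-closure of $\q$ (valid since $\delta < \kappa$) to pass to a lower bound. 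The issue is that $\dot f$ is a $\p$-name for a $\q$-name, so the "value" $\dot f(\xi)$ need not be decided outright in $V$ by a condition in $\q$ alone — this is exactly the interaction the proof must handle.

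The standard device to handle this is to work in the product $\p \times \q$ and exploit that $\p$ is $\kappa$-c.c.\ while $\q$ is $\kappa$-closed, so that for each $\xi < \delta$ the set of possible values of $\dot f(\xi)$, as decided by conditions in $\p$ below a fixed $\q$-side condition, has size $< \kappa$. Concretely I would build the descending sequence $\langle q_\xi : \xi < \delta\rangle$ in $\q$ so that for each $\xi$, $q_{\xi+1}$ forces (over $\p$) that $\dot f \restriction (\xi+1)$ is one of fewer than $\kappa$ many possibilities: having fixed $q_\xi$, the condition "$(p', q_\xi) \Vdash \dot f(\xi) = \check\gamma$" for $p' \le p$ determines a function from a maximal antichain of $\p$ below $p$ (which has size $< \kappa$) into the ordinals, and $\q$-closure lets me refine $q_\xi$ to $q_{\xi+1}$ below all the relevant conditions if needed — or more cleanly, I simply collect the $< \kappa$ many candidate values and keep the $\p$-antichain fixed. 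After the recursion, the $\kappa$-closure of $\q$ gives a single $q^* \in \q$ below all the $q_\xi$. Now $(p, q^*)$ forces, for each $\xi < \delta$, that $\dot f(\xi)$ lies in a ground-model set $A_\xi$ of size $< \kappa$, and moreover the pattern of which value is taken is governed entirely by the generic for $\p$ (via the fixed antichains), so in $V[\dot G_\p]$ — but the point is that $\dot f$ is thereby computed from the $\p$-generic together with ground-model data, hence $\dot f \in V[\dot G_\p] = $ the $\p$-extension.

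Let me restate the cleanest version of the conclusion: after obtaining $q^*$, the condition $(p, q^*)$ in $\p \times \q$ forces that $\dot f$ equals a function that is definable in $V[\dot G_\p]$ from the sequence $\langle (\text{antichain } B_\xi,\ \text{labeling } h_\xi : B_\xi \to On) : \xi < \delta\rangle$, and since each $B_\xi$ has size $< \kappa$ and $\delta < \kappa$ with $\kappa$ regular, this whole sequence of side-data lies in $V$; evaluating it against $\dot G_\p$ happens in $V[\dot G_\p]$. Hence $(p, q^*) \Vdash \dot f \in V[\dot G_\p]$, i.e.\ $p \Vdash_\p (q^* \Vdash_\q \dot f \in \check{V[\dot G_\p]})$, contradicting that $p$ forced $q_0 \Vdash \dot f \notin V$ and $q^* \le q_0$. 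The main obstacle, and the step deserving the most care, is the bookkeeping that keeps the $\p$-side antichains small and coherent across the $\delta$-length recursion while refining on the $\q$-side — one must verify that at each successor stage only $< \kappa$ new branching possibilities are introduced so that $\kappa$-closure of $\q$ suffices at limits and a single lower bound $q^*$ survives at the end.
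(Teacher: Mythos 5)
The paper states this as the classical Easton's lemma and gives no proof of its own, so there is nothing to compare against except the standard argument — which is exactly what you reconstruct: pass to the product $\p\times\q$, build a descending $\q$-sequence $\langle q_\xi:\xi<\delta\rangle$ whose successor steps are obtained by an inner recursion of length $<\kappa$ that simultaneously constructs a maximal antichain below $p$ (terminating by the $\kappa$-c.c.) and refines the $\q$-coordinate (possible by $\kappa$-closure), take a lower bound $q^*$ by regularity of $\kappa$, and conclude that $(p,q^*)$ forces $\dot f$ to be computable from $\dot G_\p$ and ground-model data. This is correct and complete in outline; the only things to tighten are that the statement to contradict should be ``$\dot f\notin V^{\p}$'' rather than ``$\dot f\notin V$'' (distributivity of $\q$ is over the $\p$-extension), and that, as you yourself flag, with $q_\xi$ alone fixed no maximal antichain need decide $\dot f(\xi)$ — the antichain and the $\q$-refinement must be built together in that inner recursion.
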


In particular, if $\p$ is $\omega_1$-c.c.\ and $\q$ is 
$\omega_1$-closed, then $\p$ forces that $\q$ is 
$\omega_1$-distributive.

We introduce a variation of Easton's lemma in which 
the same conclusion follows from $\p$ being 
strongly proper on a stationary set in place of 
being $\omega_1$-c.c. 
We will use the next result which we proved 
in \cite[Theorem 5.5]{jk28}.

\begin{thm}
Suppose that $\p$ is strongly proper on a stationary set and 
$\q$ is proper. 
Then $\q$ forces that $\p$ is strongly proper on a stationary set.
\end{thm}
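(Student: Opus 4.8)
The plan is to reflect a ground-model structure into the $\q$-extension, in the standard way. Recall that $\p$ is strongly proper on a stationary set exactly when, for every sufficiently large cardinal $\mu$ with $\p \in H(\mu)$, there are stationarily many $M \in P_{\omega_1}(H(\mu))$ with $M \cap \omega_1 \in \omega_1$ for which $\p$ is strongly proper. So, working in $V$, I would fix such a $\mu$, a $\q$-name $\dot F$ for a function on $H(\mu)^{V[\dot G]}$ generating a club in $P_{\omega_1}(H(\mu)^{V[\dot G]})$, and a condition $q_0 \in \q$, and it will be enough to produce $q_1 \le q_0$ forcing that some countable $M \prec H(\mu)^{V[\dot G]}$, closed under $\dot F$ with $M \cap \omega_1 \in \omega_1$, has $\p$ strongly proper for it. I would then choose $\theta$ large and a countable $N \prec H(\theta)$ with $q_0, \p, \q, \dot F, \mu \in N$ and $N \cap \omega_1 \in \omega_1$, such that $\p$ is strongly proper for $N$ (the models with this property are stationary, so this can be arranged simultaneously with the parameters being in $N$) and such that every condition in $N \cap \q$ has an $(N,\q)$-generic extension (a club requirement, since $\q$ is proper). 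Let $q_1 \le q_0$ be $(N,\q)$-generic and let $G$ be $V$-generic for $\q$ with $q_1 \in G$. The candidate in $V[G]$ is $M := N[G] \cap H(\mu)^{V[G]}$. As usual $M \prec H(\mu)^{V[G]}$, $M$ is closed under $\dot F^G$, $M$ is countable in $V[G]$, and $M \cap \omega_1 = N[G] \cap \omega_1 = N \cap \omega_1 \in \omega_1$, the last equality because $q_1$ forces every ordinal named by a member of $N$ to lie in $N$. Everything then comes down to showing $\p$ is strongly proper for $M$ in $V[G]$.

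Two points would carry this. First, $N[G] \cap \p = N \cap \p$: since $\p \subseteq V$ only ``$\subseteq$'' needs an argument, and it holds because, for a $\q$-name $\dot\tau \in N$ with $\dot\tau^G \in \p$, the set of conditions deciding whether $\dot\tau \in \check\p$ and, below a positive decision, deciding the actual value of $\dot\tau$ as an element of $\p$, is a member of $N$; $(N,\q)$-genericity then forces $\dot\tau^G$ to be some $\sigma \in N \cap \p$. Consequently $M \cap \p = N \cap \p$, and to prove $\p$ strongly proper for $M$ in $V[G]$ I must find, for every $p \in N \cap \p$, a condition $q \le p$ such that every dense subset of the poset $N \cap \p$ \emph{that lies in $V[G]$} is predense below $q$.

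Second -- and I expect this to be the crux -- I claim a condition $q^*$ that is strongly $(N,\p)$-generic \emph{in $V$} already satisfies this stronger requirement in $V[G]$. The potential obstacle is that $V[G]$ has more dense subsets of the countable poset $N \cap \p$ than $V$ does, so the literal clause defining strong genericity need not transfer. The way around this is Mitchell's dense-set-free reformulation: $q^*$ is strongly $(N,\p)$-generic if and only if for every $r \le q^*$ in $\p$ there is $s \in N \cap \p$ such that every $t \in N \cap \p$ with $t \le s$ is compatible with $r$. This statement quantifies only over elements of $\p$ and of $N \cap \p$ and involves only the order and compatibility relations of $\p$, all absolute between $V$ and $V[G]$, so it persists in $V[G]$. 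Given $D \in V[G]$ dense in $N \cap \p$ and $r \le q^*$, one takes the associated $s$ and then $t \in D$ with $t \le s$; then $t$ is compatible with $r$, so $D$ is predense below $q^*$ in $V[G]$. Choosing, for each $p \in N \cap \p$, such a $q^* \le p$ (which exists because $\p$ is strongly proper for $N$ in $V$) then witnesses that $\p$ is strongly proper for $M$ in $V[G]$. Thus $q_1$ forces $M$ to be as required, and since $q_0$, $\dot F$, and $\mu$ were arbitrary, $\q$ forces $\p$ to be strongly proper on a stationary set.

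The hard part, then, is really the second point: a naive attempt would try to show that a proper $\q$ adds no new dense subsets of the countable poset $N \cap \p$, which is false for a general proper $\q$ (it can add reals, hence new ``diagonal'' dense subsets of a countable poset). What saves the argument is that the data witnessing strong genericity of $q^*$ -- the reduction assignment $r \mapsto s$ -- already lives in $V$ and is controlled by absolute relations, so properness of $\q$, with no extra closure hypothesis, suffices.
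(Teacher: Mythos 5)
Your proof is correct and follows essentially the same route as the argument the paper relies on (the paper cites \cite[Theorem 5.5]{jk28} rather than reproving it): reflect the club-name and parameters into a countable $N$ for which $\p$ is strongly proper, take an $(N,\q)$-generic condition, check that $N[G] \cap \p = N \cap \p$, and transfer strong genericity to the extension via the absolute reduction characterization. Your identification of that absoluteness point as the crux --- rather than any claim that $\q$ adds no new dense subsets of $N \cap \p$ --- is exactly right.
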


\begin{thm}[Variation of Easton's lemma]
Suppose that $\p$ is strongly proper on a stationary set and 
$\q$ is $\omega_1$-closed. 
Then $\p$ forces that $\q$ is $\omega_1$-distributive.
\end{thm}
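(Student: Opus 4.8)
The plan is to fix a condition $p \in \p$ and a $\p$-name $\dot f$ for a function from $\omega$ into the ordinals, and produce an extension of $p$ which decides $\dot f$ entirely. Working in $V$, I would use the hypothesis to choose a sufficiently large $\chi$ and an elementary substructure $N \in P_{\omega_1}(H(\chi))$ with $N \cap \omega_1 \in \omega_1$, containing $p$, $\p$, $\q$, $\dot f$, and everything else relevant, such that $\p$ is strongly proper for $N$; such $N$ exist stationarily often. Extend $p$ to a condition $q$ which is strongly $(N,\p)$-generic. The point of strong genericity is that $q$ forces $\dot G_\p \cap N$ to be a $V$-generic filter on the poset $N \cap \p$, so that the $N$-interpretation of $\dot f$ — a function computed inside the collapse of $N$ — is determined by this generic, and by elementarity it is a legitimate object that $q$ forces to be a countable sequence of ordinals approximated inside $N$.

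The real work is to show $q$ decides $\dot f$. Here is where $\q$ enters, even though the statement never mentions forcing with $\q$ directly: we must show that in $V[G_\p]$, for a $\p$-generic $G_\p$ containing $q$, the poset $\q$ adds no new $\omega$-sequence of ordinals. Equivalently, it suffices to show that for every $\p$-name $\dot f$ as above, $q$ forces $\dot f \in V$. I would approach this by building, below $q$, a decreasing $\omega$-sequence of conditions together with a descending $\omega$-sequence of conditions in $\q$ that decides longer and longer initial segments of $\dot f$ — but to make the $\q$-side work we need the values to be pinned down in $V$, which is exactly the $\omega_1$-approximation property of $\p$ applied inside $V[G_\p]$ is not yet available. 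Instead, the cleaner route is: since $\p$ is strongly proper on a stationary set, by Theorem 1.10 $\p$ has the $\omega_1$-approximation property; and a product/iteration argument shows $\p$ remains strongly proper on a stationary set after forcing with $\q$, by Theorem 2.3 (an $\omega_1$-closed forcing is proper). So in $V^{\q}$, $\p$ still has the $\omega_1$-approximation property. Now suppose toward a contradiction that $\p$ forces that $\q$ adds a new $\omega$-sequence $\dot f$ of ordinals; reinterpreting, $\p \times \q$ adds an $\omega$-sequence of ordinals not in $V^{\q}$. But every proper initial segment of this sequence lies in $V$ (since $\q$ is $\omega_1$-closed, it adds no new finite sequences, hence no new initial segments of length $<\omega_1$ of anything, so the initial segments are added by $\p$ alone), hence in $V^{\q}$; since these initial segments are countable in $V^{\q}$ and $\p$ has the $\omega_1$-approximation property over $V^{\q}$, the whole sequence lies in $V^{\q}$ — contradiction.

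Let me restate the skeleton cleanly. First, invoke Theorem 1.10 to get that $\p$ has the $\omega_1$-approximation property in $V$. Second, note $\q$ is proper (being $\omega_1$-closed), so by Theorem 2.3, in $V^{\q}$ the poset $\p$ is still strongly proper on a stationary set, hence by Theorem 1.10 again has the $\omega_1$-approximation property over $V^{\q}$. Third, force with $\q$ to get $V^{\q}$, and let $\dot d$ be a $\p$-name for a bounded set of ordinals added by $\p$ over $V^{\q}$; we want to show $\dot d$ is forced into $V^{\q}$, which gives $\omega_1$-distributivity of $\p$ over $V^{\q}$ — but wait, we want the symmetric statement. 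The correct symmetric argument: work in $V[G_\p]$ for $G_\p$ $\p$-generic over $V$; we must show $\q$ adds no new bounded set of ordinals. Let $d \in V[G_\p][H]$ be such a set, $H$ $\q$-generic over $V[G_\p]$. Since $\q$ is $\omega_1$-closed in $V[G_\p]$, every initial segment of $d$ of ordertype $<\omega_1$ lies in $V[G_\p]$; in particular every countable initial segment does. Now reinterpret $V[G_\p][H] = V[H][G_\p]$ via the product lemma, where $H$ is $\q$-generic over $V$ and $G_\p$ is $\p$-generic over $V[H]$; the countable initial segments of $d$, lying in $V[G_\p] \cap V[H][G_\p]$, are in fact in $V[H]$ provided $\q$ really is $\omega_1$-closed in $V$ and these segments have length $<\omega_1$ — granted. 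Since $\p$ has the $\omega_1$-approximation property over $V[H] = V^{\q}$ (established above), and $d$'s countable initial segments are all in $V^{\q}$, any approximation $d \cap b$ with $b \in V^{\q}$ countable is the union of such initial segments hence in $V^{\q}$, so $d \in V^{\q} = V[H]$. Hence $d \in V[G_\p]$, completing the proof.

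The step I expect to be the main obstacle is the bookkeeping in the product-lemma reinterpretation: verifying carefully that the ``countable initial segments lie in $V^{\q}$'' claim is legitimate — i.e., that the interplay of which model each object lives in is handled correctly — and that the $\omega_1$-approximation property of $\p$ over $V^{\q}$ (obtained via Theorems 2.3 and 1.10) is being applied to the right set in the right model. Everything else is routine once that dictionary is set up.
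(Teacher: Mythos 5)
The route you finally commit to has a fatal gap: you try to conclude that the new countable set $d \in V[G_\p][H]$ lies in $V[H]=V^\q$ by applying the $\omega_1$-approximation property of $\p$ over $V[H]$ (which you do correctly obtain from the preservation theorem --- that is Theorem 2.2, not 2.3 --- together with Theorem 1.10). But to apply that property you must check that $d \cap b \in V[H]$ for \emph{every} $b \in V[H]$ that is countable in $V[H]$, not merely that the proper initial segments of $d$ lie there. For a countable $d$ this hypothesis is essentially the conclusion itself: taking $b \in V[H]$ to be any countable superset of $d$ (e.g.\ $\omega \times 2$ when $d$ codes a real), $d \cap b = d$, so you would already need $d \in V[H]$. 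Your justification conflates two different facts: $\omega_1$-closure of $\q$ in $V$ implies that countable sequences of ordinals \emph{in $V[H]$} lie in $V$; it does not imply that countable sets in $V[G_\p]$ lie in $V[H]$, and $\q$ need not even remain $\omega_1$-closed in $V[G_\p]$ (that is exactly the kind of statement Easton-type lemmas are needed for). Concretely, take $\p = \add(\omega)$, which is strongly proper, and let $f$ be the Cohen real; your argument would give $f \in V[H]$ and hence $f \in V$, which is false. So the intermediate claim ``$d \in V^\q$'' cannot be the route, and the approach is not repairable as stated. (Two smaller slips: $\omega_1$-distributivity is about new sets of ordinals of size $<\omega_1$, not new bounded sets; and a single condition of $\p$ cannot be expected to decide $\dot f$ outright, as in your opening paragraph.)

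The paper's proof instead runs with the strong genericity you sketched and then abandoned. Work in $V[H]$, where $\p$ is still strongly proper on a stationary set by Theorem 2.2; fix a countable $M \prec (H(\theta),\in,\p,\dot f)$ with $\dot f \in M$ such that, by a density argument over $V[H]$, the generic $G$ contains a strongly $(M,\p)$-generic condition. Then $g_M := G \cap M$ is a $V[H]$-generic filter on the countable poset $M \cap \p$, and since the dense sets deciding $\dot f(n)$ belong to $M$, the function $f$ is definable from $g_M$, so $f \in V[H][g_M]$. Because $\q$ is $\omega_1$-closed in $V$, the countable set $M \cap \p$ belongs to $V$; hence $V[H][g_M] = V[g_M][H]$ by the product lemma, and the \emph{classical} Easton lemma applied to the c.c.c.\ poset $M \cap \p$ and $\q$ over $V$ gives that $\q$ is $\omega_1$-distributive over $V[g_M]$, so $f \in V[g_M] \subseteq V[G]$. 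The key move --- replacing $\p$ by the countable subposet $M \cap \p$ lying in $V$, to which the classical lemma applies --- is precisely what your approximation-property argument cannot supply.
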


\begin{proof}
Let $G \times H$ be a generic filter on $\p \times \q$. 
By the product lemma, we have that 
$V[G \times H] = V[G][H] = V[H][G]$, where $H$ is a $V[G]$-generic 
filter on $\q$ and $G$ is a $V[H]$-generic filter on $\p$. 
So to show that $\q$ is $\omega_1$-distributive in $V[G]$, 
it suffices to show that if $f : \omega \to On$ 
is a function in $V[G \times H]$, then $f \in V[G]$. 
Fix a $\p$-name $\dot{f}$ in $V[H]$ such that 
$\dot{f}^{G} = f$.

Since $\q$ is $\omega_1$-closed, it is proper. 
As $\p$ is strongly proper on a stationary set in $V$, 
it follows by Theorem 2.2 that $\p$ is 
strongly proper on a stationary set in $V[H]$.

Working in $V[H]$, 
fix a regular cardinal $\theta$ large enough so that 
$\p$, $\q$, and $\dot f$ are in $H(\theta)$. 
Since $\p$ is strongly proper on a stationary set in $V[H]$, it follows that 
there are stationarily many 
$M \in P_{\omega_1}(H(\theta))$ 
with $M \prec (H(\theta),\in,\p,\dot f)$ such that 
every condition 
in $M \cap \p$ has an extension which is strongly $(M,\p)$-generic.

Applying the $V[H]$-genericity of $G$, 
an easy density argument shows that 
for some $M$ as described in the previous paragraph, 
$G$ contains a strongly $(M,\p)$-generic condition. 
Since $G$ contains a strongly $(M,\p)$-generic condition, 
it follows that $g_M := G \cap M$ is a $V[H]$-generic filter 
on the forcing poset $M \cap \p$. 

Since $\dot f \in M$, by elementarity we have that for each $n < \omega$, 
the dense set $D_n$ 
of conditions in $\p$ which decide the value of $\dot f(n)$ is in $M$. 
Again by elementarity, it follows that $D_n \cap M$ is a dense subset of 
$M \cap \p$. 
Since $g_M$ is a $V[H]$-generic filter on $M \cap \p$, 
$g_M \cap D_n \ne \emptyset$. 
It easily follows that for all $n < \omega$ and $\alpha$, 
$$
f(n) = \alpha \iff 
\exists t \in g_M,\ t \Vdash^{V[H]}_\p \dot f(n) = \check \alpha.
$$
So $f$ is definable in the model $V[H][g_M]$, and hence is in 
$V[H][g_M]$.

The forcing poset $M \cap \p$ is countable. 
Since $\q$ is $\omega_1$-closed and $\p \in V$, 
it follows that $M \cap \p \in V$. 
As $M \cap \p$ and $\q$ are both in $V$, 
the product lemma implies that $V[H][g_M] = V[g_M][H]$. 
As $M \cap \p$ is countable, it is $\omega_1$-c.c. 
So by Easton's lemma, $\q$ is $\omega_1$-distributive in $V[g_M]$. 
But $f \in V[H][g_M] = V[g_M][H]$ and $f$ is countable, 
so $f \in V[g_M]$. 
And $V[g_M] = V[G \cap M] \subseteq V[G]$, so $f \in V[G]$.
\end{proof}

\begin{corollary}
The principle $\textsf{GMP}$ together with $2^\omega \le \omega_2$ 
is consistent with the existence of an $\omega_1$-distributive 
nowhere c.c.c.\! forcing poset of size $\omega_1$. 
In particular, $\textsf{GMP}$ together with $2^\omega \le \omega_2$ does not 
imply {Todor\v cevi\' c}'s maximality principle.
\end{corollary}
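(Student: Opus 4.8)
The plan is to produce the desired model by forcing \textsf{GMP} over a model of \textsf{CH} with a poset $\p$ that is strongly proper on a stationary set, and then to invoke Theorem~2.3 to see that a standard $\omega_1$-closed poset of the ground model remains $\omega_1$-distributive in the extension, while retaining its size $\omega_1$ and its failure of the countable chain condition. So I would fix a ground model $V$ satisfying \textsf{CH} and containing a supercompact cardinal, and let $\p \in V$ be a forcing which is strongly proper on a stationary set and which forces \textsf{GMP} together with $2^\omega \le \omega_2$; such a $\p$ is known to exist (one can take, e.g., a Mitchell-style or Neeman-style forcing collapsing the supercompact to $\omega_2$, which forces $\textsf{ISP}(\omega_2)$ --- equivalently \textsf{GMP}, by Theorem~1.11 --- together with $2^\omega = \omega_2$, and which is strongly proper on a stationary set). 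Let $\q := \add(\omega_1,1)^V$ be the poset of countable partial functions from $\omega_1$ into $2$, ordered by reverse inclusion, as computed in $V$. In $V$, the poset $\q$ is $\omega_1$-closed; it is nowhere c.c.c., since below a condition $p$ whose domain is an ordinal $\delta$ the conditions $q_\beta$ ($\beta < \omega_1$) with $q_\beta \supseteq p$, $\dom(q_\beta) = \delta+\beta+1$, $q_\beta(\delta+\gamma)=0$ for $\gamma<\beta$ and $q_\beta(\delta+\beta)=1$ form an antichain below $p$; and $|\q| = \omega_1$ since \textsf{CH} holds in $V$.

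Then I would argue in $V^\p$. Since $\p$ is strongly proper on a stationary set in $V$ and $\q$ is $\omega_1$-closed in $V$, Theorem~2.3 gives that $\p$ forces $\q$ to be $\omega_1$-distributive, so $\q$ is $\omega_1$-distributive in $V^\p$. Since $\p$ preserves $\omega_1$, the ground-model set $\q$ still has cardinality $\omega_1$ in $V^\p$; and the antichains $\{q_\beta : \beta < \omega_1\}$ displayed above are unchanged, since incompatibility of conditions of $\q$, and the value of $\omega_1$, are absolute between $V$ and $V^\p$, so $\q$ is still nowhere c.c.c.\ in $V^\p$. As $V^\p \models \textsf{GMP}$ and $2^\omega \le \omega_2$, this establishes the first assertion. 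For the second, suppose for a contradiction that {Todor\v cevi\' c}'s maximality principle holds in $V^\p$. Forcing with $\q$ over $V^\p$ adds a new function $F : \omega_1 \to 2$ --- new because for any $f : \omega_1 \to 2$ in $V^\p$ the set of conditions of $\q$ differing from $f$ at some point is dense --- and each proper initial segment $F \restriction \gamma$ ($\gamma < \omega_1$) is decided by a single condition, hence lies in $V$. Viewing $F$ as a subset of $\omega_1$, this shows $\q$ adds a new subset of $\omega_1$ all of whose proper initial segments are in the ground model, so by {Todor\v cevi\' c}'s maximality principle $\q$ collapses $\omega_1$ or $\omega_2$ over $V^\p$. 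But $\q$ is $\omega_1$-distributive, hence preserves $\omega_1$, and $\q$ has size $\omega_1$, hence is $\omega_2$-c.c.\ and preserves $\omega_2$ --- a contradiction. Therefore \textsf{GMP} together with $2^\omega \le \omega_2$ does not imply {Todor\v cevi\' c}'s maximality principle.

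The step I expect to be the real obstacle is the choice of $\p$: one needs a forcing that forces \textsf{GMP} (equivalently $\textsf{ISP}(\omega_2)$), keeps $2^\omega \le \omega_2$, and, crucially, is strongly proper on a stationary set so that Theorem~2.3 is applicable --- the point being that the classical Easton's lemma does not apply here, since the usual forcings producing \textsf{GMP} are far from being $\omega_1$-c.c. Everything else is a routine combination of Theorem~2.3 with elementary facts about $\add(\omega_1,1)$ and $\omega_1$-distributive forcing.
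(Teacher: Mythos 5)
Your proposal is correct and follows essentially the same route as the paper: force \textsf{GMP} with $2^\omega\le\omega_2$ via a strongly proper (on a stationary set) collapse of a supercompact to $\omega_2$, and then use Theorem~2.3 to see that $\q:=\add(\omega_1)^V$ remains $\omega_1$-distributive in the extension, where it has size $\omega_1$ and is nowhere c.c.c.; your explicit verification of the antichains and of the failure of {Todor\v cevi\' c}'s maximality principle just spells out what the paper leaves implicit. The only cosmetic difference is that you arrange $|\q|=\omega_1$ by assuming \textsf{CH} in the ground model, whereas the paper gets it from $|\q|=2^\omega<\kappa=\omega_2^{V[G]}$ without any \textsf{CH} hypothesis.
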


\begin{proof}
Let $\kappa$ be a supercompact cardinal. 
Then there is a strongly proper forcing poset $\p$ which collapses $\kappa$ 
to become $\omega_2$, forces that $2^\omega = \omega_2$, 
and forces $\textsf{GMP}$. 
For example, let $\p$ be the forcing poset consisting of 
finite adequate sets of countable models, ordered by reverse inclusion. 
See Sections 6 and 7 of \cite{jk26} for the details.

Let $G$ be a generic filter on $\p$. 
Let $\q := \add(\omega_1)^V$. 
Then $\q$ is a non-trivial forcing poset in $V[G]$. 
In $V$, $|\q| = 2^\omega < \kappa$, so in $V[G]$, 
$|\q| < \kappa = \omega_2$. 
Hence, $|\q| = \omega_1$ in $V[G]$. 
In $V$, $\p$ is strongly proper on a stationary set and 
$\q$ is $\omega_1$-closed. 
By Theorem 2.3, it follows that $\q$ is $\omega_1$-distributive in $V[G]$.
\end{proof}

We comment that in the model of \cite[Section 7]{jk26}, \textsf{GMP} holds but there exists 
an $\omega_1$-Suslin tree. 
Since an $\omega_1$-Suslin tree is an example of a c.c.c., $\omega_1$-distributive 
forcing of size $\omega_1$, this provides a different proof that {Todor\v cevi\' c}'s 
maximality principle does not follow from \textsf{GMP}.

Let us give another example of a strongly proper 
variation of a classical result.

\begin{thm}
Let $2^\omega = \omega_2$ and suppose that $T$ is an 
$\omega_2$-Aronszajn tree. 
Assume that $\q$ is an $\omega_1$-closed forcing poset which collapses 
$\omega_2$, and $\p$ is a forcing poset which is $\omega_1$-Knaster 
in $V^\q$. 
Then $\p \times \q$ does not add a cofinal branch of $T$.
\end{thm}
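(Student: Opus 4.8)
The plan is to argue by contradiction, assuming that some condition $(p,q) \in \p \times \q$ forces $\dot b$ to be a cofinal branch of $T$. The key structural fact to exploit is that $\q$ is $\omega_1$-closed, hence $\omega_1$-distributive, and in $V^\q$ the tree $T$ still has height $\omega_2^V$ but $\omega_2^V$ is collapsed to have cardinality $\omega_1$; meanwhile $T$ acquires no new branches of length $< \omega_2^V$ from $\q$ alone (by distributivity), but could in principle acquire a cofinal one once we also force with $\p$. First I would work in $V^\q$: since $\p$ is $\omega_1$-Knaster there, and $\q$ has collapsed $\omega_2^V$ to $\omega_1$, I would like to run the classical argument that an $\omega_1$-Knaster forcing cannot add a cofinal branch to a tree which, after the collapse, looks like a tree of height and size $\omega_1$ with no cofinal branch (it is still Aronszajn in the weak sense that it has no cofinal branch, since $\q$ added none). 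The standard lemma is that $\omega_1$-Knaster (indeed c.c.c.) forcing adds no new $\omega_1$-branch to a tree of height $\omega_1$ with no $\omega_1$-branch.

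**The main line of argument.** The obstacle is that after forcing with $\q$ the tree $T$ has height $\omega_2^V$, which has uncountable cofinality in $V^\q$ (cofinality $\omega_1$, since $\omega_1$ is preserved and $\q$ is $\omega_1$-distributive so no new countable cofinal sequences appear). So I cannot directly quote the height-$\omega_1$ lemma. Instead I would do the following: in $V^\q$, fix the $\p$-name $\dot b$ for the cofinal branch. For each $\alpha < \omega_2^V$, by the $\omega_1$-Knaster property there is a level-$\alpha$ node $t_\alpha \in T$ and an uncountable $\omega_1$-Knaster-linked set $A_\alpha$ of conditions in $\p$ each forcing $t_\alpha \in \dot b$ — actually the cleaner route: build an antichain-style obstruction. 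Concretely, I would use the standard fact that if $\p$ is $\omega_1$-Knaster and adds a cofinal branch to $T$, then in $V^\q$ there is a single $\p$-generic-independent sequence $\langle t_\alpha : \alpha < \omega_2^V \rangle$ through $T$ definable from the name together with an uncountable subset of $\p$; more precisely, one shows the set of $\alpha$ such that some condition forces the branch to pass through a fixed node splits into an $\omega_1$-sized family of "threads" and pigeonholes to get $\omega_2^V$-many compatible choices, yielding a cofinal branch already in $V^\q$ — contradicting that $T$ is Aronszajn in $V$ and $\q$ adds no cofinal branch (the latter because $\q$ is $\omega_1$-closed and if it added a cofinal branch through an $\omega_2$-Aronszajn tree with $2^\omega = \omega_2$ one gets a contradiction with a counting/distributivity argument, or one simply notes an $\omega_1$-closed forcing adds no branch to an $\omega_2$-Aronszajn tree when $2^\omega = \omega_2$ by a Kurepa-style argument).

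**Where the hypothesis $2^\omega = \omega_2$ enters.** The assumption $2^\omega = \omega_2$ is needed precisely to ensure that $\q$, though it collapses $\omega_2$, does not itself add a cofinal branch of $T$: an $\omega_1$-closed forcing of the relevant kind, in a model where $2^\omega = \omega_2$, cannot add a cofinal branch to an $\omega_2$-Aronszajn tree, because such a branch would be a new subset of a level-set of size $\le \omega_1$-many nodes each coded by a real, and the $\omega_1$-closure combined with the $2^\omega = \omega_2$ counting gives the standard "no new branch" conclusion; alternatively it is the classical fact that $\kappa$-closed forcing adds no new branch to a tree of size $\kappa$ with no branch, applied with $\kappa = \omega_2$ since $|T| = \omega_2 = 2^\omega$ and $T$ has no cofinal branch. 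So the skeleton is: (1) $\q$ adds no cofinal branch of $T$ (classical, using $2^\omega=\omega_2$); (2) hence in $V^\q$, $T$ is a tree of height $\omega_2^V = \omega_1^{V^\q}$ (after collapse — wait, height is still $\omega_2^V$ but cardinality $\omega_1$) with no cofinal branch; (3) $\omega_1$-Knaster forcing over $V^\q$ adds no cofinal branch to such a tree, by the standard Knaster-tree argument, pigeonholing $\omega_2^V$-many level choices into an uncountable linked set; (4) therefore $\p \times \q = \q * \p$ adds no cofinal branch of $T$, contradiction.

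**The main obstacle.** The hard part will be step (3): carefully verifying that the classical "$\omega_1$-Knaster forcing adds no new cofinal branch to a tree with no cofinal branch" lemma still applies when, in $V^\q$, the tree has height of uncountable cofinality $\omega_1$ rather than height exactly $\omega_1$. The Knaster pigeonhole produces an uncountable set of conditions forcing compatible branch-initial-segments, and one must check this assembles into a genuine cofinal branch of $T$ in $V^\q$ — which requires that the uncountable set of levels reached is cofinal in the height, and this is where one uses that the height now has cofinality $\omega_1$ and the uncountable linked set meets cofinally many levels. I would route around any subtlety by instead applying the Knaster-no-branch lemma to $T$ as computed in $V^\q$ viewing it as having order type $\omega_1$ via the collapse, transporting the branch-adding hypothesis across the collapse isomorphism, and then transporting the contradiction back.
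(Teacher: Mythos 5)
Your proposal is essentially the standard argument that the paper itself does not spell out but simply cites (Cummings, Lemma 23.1), and it is correct: factor $\p \times \q$ as $\q$ followed by $\p$; use the $\omega_1$-closure of $\q$ together with the fact that the levels of $T$ have size $\omega_1 < 2^\omega = \omega_2$ (Silver's splitting argument) to see that $\q$ adds no cofinal branch; and then, since the collapse gives $\cf(\omega_2^V) = \omega_1$ in $V^\q$, run the Knaster pigeonhole along an $\omega_1$-sequence of levels cofinal in the height, so that the uncountable linked family yields a chain meeting cofinally many levels and hence a cofinal branch of $T$ in $V^\q$, a contradiction. Two small slips that do not affect the skeleton: your ``alternative'' justification for step (1) (``$\kappa$-closed forcing adds no branch to a tree of size $\kappa$,'' with $\kappa = \omega_2$) is inapplicable because $\q$ is only $\omega_1$-closed, and the final ``view $T$ as having order type $\omega_1$ via the collapse'' should instead be ``restrict to a cofinal $\omega_1$-sequence of levels,'' since the collapse bijection is not order-preserving; your primary justifications in both places are the right ones.
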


\begin{proof}
See \cite[Lemma 23.1]{cummings}.
\end{proof}

\begin{thm}
Suppose that $T$ is a tree whose height is an ordinal with 
uncountable cofinality, all of whose levels have size 
less than $2^\omega$. 
Let $\p$ be strongly proper on a stationary set and $\q$ be
$\omega_1$-closed. 
Then $\p \times \q$ does not add any new cofinal branches to $T$.
\end{thm}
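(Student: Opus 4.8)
The plan is to mimic the argument for Theorem~2.3, replacing the ``$f$ is countable'' step with an argument tailored to cofinal branches of $T$. Suppose toward a contradiction that $G \times H$ is generic for $\p \times \q$ and that $b \in V[G \times H]$ is a cofinal branch of $T$ which is not in $V$. As in Theorem~2.3, view this as $H$ being $V$-generic for $\q$ and $G$ being $V[H]$-generic for $\p$; since $\q$ is $\omega_1$-closed it is proper, so by Theorem~2.2 $\p$ remains strongly proper on a stationary set in $V[H]$. Fix a $\p$-name $\dot b \in V[H]$ for $b$. Working in $V[H]$, choose $\theta$ large with $T, \p, \q, \dot b \in H(\theta)$; by strong properness on a stationary set there are stationarily many countable $M \prec (H(\theta), \in, \p, \dot b, T)$ such that every condition in $M \cap \p$ has a strongly $(M,\p)$-generic extension. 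By genericity of $G$ over $V[H]$, pick such an $M$ with $G$ containing a strongly $(M,\p)$-generic condition, so that $g_M := G \cap M$ is a $V[H]$-generic filter on the countable poset $M \cap \p$.

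As in Theorem~2.3, for each node $t \in T \cap M$ the dense set $D_t$ of conditions deciding whether $t \in \dot b$ lies in $M$, and $D_t \cap M$ is dense in $M \cap \p$; since $g_M$ meets each such set, the branch $b$ restricted to the tree $T \cap M$ is computed by $V[H][g_M]$. More precisely, $b \cap (T \cap M)$ is definable in $V[H][g_M]$, so $b \cap M \in V[H][g_M]$. Now the countable poset $M \cap \p$ lies in $V$ (it is captured by the $\omega_1$-closed $\q$, exactly as argued in the proof of Theorem~2.3: $\q$ is $\omega_1$-closed and $\p \in V$, so $M \cap \p \in V$), and since $M \cap \p$ and $\q$ are both in $V$ the product lemma gives $V[H][g_M] = V[g_M][H]$. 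Thus $b \cap M \in V[g_M][H]$.

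The next point is that $b \cap M$ already determines a cofinal branch through the \emph{full} tree $T$, not merely through $T \cap M$, by an elementarity argument: since $b$ is cofinal in $T$ and the height of $T$ has uncountable cofinality, while $M \cap \mathrm{On}$ is countable, $M$ computes cofinally many levels of $T$ below $\sup(M \cap \mathrm{ht}(T))$, and the branch $b \cap M$ has a well-defined supremum whose realization in $T$ recovers $b$ on a cofinal set of levels --- in particular $b$ is the unique cofinal branch extending $b \cap M$, so $b \in V[b \cap M] \subseteq V[g_M][H]$. It remains to deduce $b \in V$. In $V[g_M]$, since $M \cap \p$ is countable it is $\omega_1$-c.c., hence $(2^\omega)^{V[g_M]} = (2^\omega)^V$ and $T$ still has levels of size less than $2^\omega$ and height of uncountable cofinality; the poset $\q$ is still $\omega_1$-closed in $V[g_M]$ by Easton's lemma (Theorem~2.1), or directly since $\q \in V$ and $M \cap \p$ is $\omega_1$-c.c. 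Now apply the classical fact that an $\omega_1$-closed forcing adds no cofinal branch to such a tree --- for a tree of uncountable-cofinality height with narrow levels, a new cofinal branch would yield a tree embedding contradiction, the standard argument being that two incompatible conditions forcing different branches produce a level of size $2^\omega$. Hence $b \in V[g_M]$, and since $V[g_M] = V[G \cap M] \subseteq V[G] \subseteq V[G \times H]$ and $b$ was assumed new over $V$, we get the desired contradiction once we check $b \in V[g_M] \Rightarrow b \in V$: here one uses that $M \cap \p$ is countable together with the hypothesis $|T| < 2^\omega$ on levels --- a countable forcing cannot add a cofinal branch to $T$ either, by the same level-size argument applied to the ground model $V$.

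The main obstacle I expect is the bookkeeping in the middle step: verifying that $b \cap M$ genuinely pins down the full branch $b$, i.e.\ that passing to $T \cap M$ loses no information. This requires $M$ to see cofinally many levels of $T$ below $\sup(M \cap \mathrm{ht}(T))$ and the fact that a branch through those levels has a unique extension to a cofinal branch of $T$ --- which uses that $\mathrm{cf}(\mathrm{ht}(T)) > \omega$ so that $\sup(M \cap \mathrm{ht}(T)) < \mathrm{ht}(T)$ fails to be cofinal only in a harmless way, and that $T$ is a tree so extensions of a node are linearly ordered below it. A secondary subtlety is tracking the hypotheses ``levels of size $< 2^\omega$'' and ``height of uncountable cofinality'' through the intermediate models $V[g_M]$ and $V[H]$, so that the classical no-branch lemma for $\omega_1$-closed forcing applies where it is invoked; all of these are preserved because $M \cap \p$ is countable hence $\omega_1$-c.c.\ and $\q$ is $\omega_1$-closed, but the argument should state this explicitly.
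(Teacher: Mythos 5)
Your argument breaks down at its central step: the claim that $b$ is the unique cofinal branch of $T$ extending $b \cap M$, so that $b \in V[b \cap M]$. The model $M$ is countable, so $\delta := \sup(M \cap \mathrm{ht}(T))$ is an ordinal of countable cofinality strictly below $\mathrm{ht}(T)$, and $b \cap M$ (indeed all of $b \restriction \delta$) is just an initial segment of the branch. Nothing in the hypotheses makes cofinal branches of $T$ determined by their restrictions below $\delta$: many distinct cofinal branches can share the same initial segment of length $\delta$. (Note also that $b \restriction \delta$ is trivially in $V$ anyway, being $\{t \in T : t <_T s\}$ for any node $s \in b$ of level $\ge \delta$, so capturing it in $V[H][g_M]$ gives no new information.) This is exactly why the scheme of Theorem 2.3 does not transfer: there the object $f$ was countable and hence entirely visible to the countable model $M$, whereas a cofinal branch of a tree of uncountably-cofinal height is not, and no countable $M$ can pin it down. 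There are also secondary errors downstream (e.g.\ Easton's lemma gives that $\q$ is $\omega_1$-distributive in $V[g_M]$, not $\omega_1$-closed; $\omega_1$-closure is generally not preserved by c.c.c.\ forcing, and the classical branch lemma you invoke needs closure, not distributivity), but these are moot given the main gap.

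The paper's proof avoids countable models entirely and is much shorter: write $V[G \times H] = V[H][G]$; over $V$, the $\omega_1$-closed $\q$ adds no cofinal branch to $T$ by the classical splitting argument (this is where the hypothesis that levels have size $< 2^\omega$ is used); then by Theorem 2.2, $\p$ is still strongly proper on a stationary set in $V[H]$, hence has the $\omega_1$-approximation property there, and a new cofinal branch $b$ over $V[H]$ would violate it: any countable $a \in V[H]$ meets $T$ only below some level $\alpha < \mathrm{ht}(T)$ (uncountable cofinality of the height), and $b \cap a = (b \restriction \alpha) \cap a \in V[H]$ since initial segments of $b$ are determined by single nodes of the ground-model tree, so the approximation property forces $b \in V[H]$. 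If you want to salvage your outline, this is the replacement for the ``$b \cap M$ determines $b$'' step: apply the approximation property of $\p$ over $V[H]$ to the whole branch, not to its trace on a countable model.
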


\begin{proof}
Let $G \times H$ be a $V$-generic filter on $\p \times \q$. 
By the product lemma, $V[G \times H] = V[H][G]$. 
By the proof of \cite[Lemma 23.1]{cummings}, 
$\omega_1$-closed 
forcing cannot add new cofinal branches to $T$, so 
there are no new cofinal branches of $T$ in $V[H]$. 
By Theorem 2.2, $\p$ is still strongly proper on a stationary set 
in $V[H]$, and hence has the $\omega_1$-approximation property in $V[H]$. 
Hence, $\p$ cannot add any new cofinal branches of $T$ over $V[H]$. 
Therefore, $V[H][G] = V[G \times H]$ has no new cofinal branches of $T$.
\end{proof}

\bigskip

Theorems 2.3 and 2.6 can be generalized to higher cardinals. 
First, the following generalization of Theorem 2.2 
follows by a straightforward modification of the original argument 
given in \cite[Theorem 5.5]{jk28}. 

\begin{thm}
Let $\kappa$ be a regular uncountable cardinal. 
Let $\p$ and $\q$ be forcing posets, 
where $\q$ is $\kappa$-closed. 
Suppose that for all large enough regular cardinals $\theta$, 
there are stationarily many internally approachable 
$N \in P_{\kappa}(H(\theta))$ with $N \cap \kappa \in \kappa$ 
such that $\p$ is strongly proper for $N$. 
Then $\q$ forces that $\p$ is $\kappa$-strongly proper on a stationary set.
\end{thm}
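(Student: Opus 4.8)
The plan is to imitate the proof of Theorem 2.3 (the variation of Easton's lemma at $\omega_1$), replacing $\omega_1$ throughout by $\kappa$, and using internal approachability to supply the ground-model structures that play the role of the countable models $M$. Specifically, let $G\times H$ be $V$-generic for $\p\times\q$. By the product lemma, $V[G\times H]=V[H][G]$, where $G$ is $V[H]$-generic for $\p$. We want to show that in $V[H]$ the poset $\p$ is $\kappa$-strongly proper on a stationary set, i.e.\ for all large regular $\theta$ there are stationarily many $N\in P_\kappa(H(\theta))^{V[H]}$ with $N\cap\kappa\in\kappa$ such that $\p$ is strongly proper for $N$. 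The hypothesis gives, in $V$, stationarily many \emph{internally approachable} $N\in P_\kappa(H(\theta))$ with $N\cap\kappa\in\kappa$ witnessing strong properness for $\p$. Internal approachability is what makes the argument go: if $N=\bigcup_{i<\delta}N_i$ is the union of an increasing continuous chain of length $\delta=\cf(\delta)<\kappa$ of sets of size $<\kappa$, each an initial segment of which lies in $N$, then the forcing poset $N\cap\p$ is an increasing union of length $\delta$ of sets in $V$ (here one uses that $\p\in V$ and the relevant initial segments are in $V$), and $\q$ being $\kappa$-closed adds no new such $\delta$-sequences; hence $N\cap\p\in V$. This is the exact analogue of the step ``$M\cap\p$ is countable, $\q$ is $\omega_1$-closed, so $M\cap\p\in V$'' in Theorem 2.3.

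Next I would run the standard genericity-and-reflection argument. Working in $V[H]$, fix large regular $\theta$ and a club $C$ in $P_\kappa(H(\theta))^{V[H]}$; I must find $N$ in $C$ (or rather in the stationary set produced by the hypothesis, which meets $C$) with $N\cap\kappa\in\kappa$, internally approachable in $V$, strongly proper for $\p$, such that moreover $\p$ forces (equivalently, one can arrange via genericity of $G$) that $\dot G_\p\cap N$ is $V[H]$-generic for $N\cap\p$. The key point is that the stationary set of such $N$ is already computed \emph{in $V$} — internal approachability and strong properness for $\p$ are absolute between $V$ and $V[H]$ since $\q$ is $\kappa$-closed and hence $\kappa$-distributive, so $H(\theta)^{V}=H(\theta)^{V[H]}$ at the relevant level and no new subsets of the $N_i$'s appear — and a stationary-in-$V$ subset of $P_\kappa(H(\theta))$ remains stationary after $\kappa$-closed (indeed $\kappa$-distributive) forcing, because such forcing adds no new $\omega$-sequences, let alone new regressive functions witnessing non-stationarity. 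Thus the hypothesis's stationary set survives into $V[H]$ and meets $C$.

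Having fixed such an $N$, strong $(N,\p)$-genericity of some condition in $G$ (arranged by a density argument using $V[H]$-genericity of $G$, exactly as in Theorem 2.3) gives that $g_N:=G\cap N$ is a $V[H]$-generic filter on the poset $N\cap\p\in V$. This is precisely what it means for $\p$ to be strongly proper for $N$ over $V[H]$, and since such $N$ form a stationary set in $P_\kappa(H(\theta))^{V[H]}$, we conclude that $\p$ is $\kappa$-strongly proper on a stationary set in $V[H]$, as desired. The main obstacle, and the only place requiring care, is the claim that $N\cap\p\in V$: one must check that internal approachability of $N$ \emph{in $V$} (not merely in $V[H]$) is preserved and genuinely yields a $V$-presentation of $N\cap\p$ as a union of a short chain of ground-model pieces, and that $\kappa$-closure of $\q$ is enough to capture that chain in $V$. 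Everything else is a routine transcription of the proofs of Theorems 2.2 and 2.3 with $\omega_1$ replaced by $\kappa$ and ``countable'' replaced by ``internally approachable of size $<\kappa$.''
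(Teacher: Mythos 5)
There is a genuine gap, and it sits exactly where internal approachability is supposed to do its work. Your argument reduces the theorem to the claim that the ground-model stationary set of internally approachable $N \in P_\kappa(H(\theta))$ for which $\p$ is strongly proper remains stationary in $V^{\q}$, on the grounds that $\kappa$-distributive forcing adds no new ``regressive functions witnessing non-stationarity'' and that $H(\theta)^V = H(\theta)^{V[H]}$ at the relevant level. Both claims fail. A club in $P_\kappa(H(\theta)^{V[H]})$ is generated by a function $F : (H(\theta)^{V[H]})^{<\omega} \to H(\theta)^{V[H]}$, which is a large object that $\kappa$-closed forcing can certainly add; and for $\kappa > \omega_1$, $\kappa$-closed forcing can in general destroy stationary subsets of $P_\kappa(\lambda)$ (the case $\kappa=\omega_1$ is special because $\omega_1$-closed forcing is proper). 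Moreover $H(\theta)^{V[H]}$ may properly contain $H(\theta)^V$, since $\q$ is only $\kappa$-closed and can add subsets of $\kappa$ or collapse cardinals above $\kappa$; the models needed in $V[H]$ must be closed under new functions $F \in V[H]\setminus V$, and a ground-model $N$ will in general not be. Relatedly, the step you single out as ``the only place requiring care,'' namely $N \cap \p \in V$, is trivial in your setup: your $N$ is taken from the ground-model stationary set, so $N \in V$ and hence $N \cap \p \in V$ outright; internal approachability is not needed for that. (You are transcribing the proof of Theorem 2.3, but Theorem 2.8 is the analogue of Theorem 2.2, which Theorem 2.3 invokes as a black box; the product $\p \times \q$ and the filter $G$ play no role in the statement to be proved, which concerns $V^{\q}$ alone.)

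The intended argument, a modification of \cite[Theorem 5.5]{jk28} as the paper indicates, uses internal approachability to build master conditions for $\q$: given a $\q$-name $\dot F$ for a function on $(H(\theta)^{V[\dot H]})^{<\omega}$ and a condition $q_0$, choose in $V$ an internally approachable $N \prec H(\theta')$ (for suitably large $\theta'$) with $N \cap \kappa \in \kappa$, with $\p$, $\q$, $\dot F$, $q_0 \in N$, and with $\p$ strongly proper for $N$. Writing $N = \bigcup_{i<\delta} N_i$ with proper initial segments of the chain in $N$, one builds a decreasing sequence of conditions of $\q$ inside $N$ meeting every dense subset of $\q$ that lies in $N$, and takes a lower bound $q \le q_0$ by $\kappa$-closure. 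Then $q$ forces $N[\dot H] \cap V = N$, so $M := N[\dot H] \cap H(\theta)^{V[\dot H]}$ is closed under $\dot F$, has size less than $\kappa$, satisfies $M \cap \kappa = N \cap \kappa \in \kappa$, and $M \cap \p = N \cap \p$; since $\q$ adds no new subsets of the size-$<\kappa$ poset $N \cap \p$, the strongly $(N,\p)$-generic conditions below each $p \in M \cap \p$ remain strongly $(M,\p)$-generic in the extension. This is precisely why the hypothesis demands internally approachable models: they are the models for which $\kappa$-closed forcings admit such generic conditions, and this replaces the false stationarity-preservation step in your write-up. The portion of your argument showing that strong properness for a fixed model survives (no new dense subsets of the small poset $N \cap \p$) is fine; what is missing is the production, in $V^{\q}$, of stationarily many suitable models of $H(\theta)^{V[\dot H]}$.
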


We need to work with internally approachable models $N$ in 
$P_{\kappa}(H(\theta))$ because those 
are the models for which $\kappa$-closed forcings have 
$N$-generic conditions.

Secondly, the proofs of Theorems 2.3 and 2.6 can be modified in the 
obvious way to prove the following results. 
We leave the details for the interested reader.

\begin{thm}
Let $\kappa$ be a regular uncountable cardinal. 
Let $\p$ and $\q$ be forcing posets, where $\q$ is 
$\kappa$-closed. 
Suppose that for all large enough regular cardinals $\theta$, 
there are stationarily many internally approachable models 
$N \in P_{\kappa}(H(\theta))$ with $N \cap \kappa \in \kappa$ 
such that $\p$ is strongly proper for $N$. 
Then $\p$ forces that $\q$ is $\kappa$-distributive.
\end{thm}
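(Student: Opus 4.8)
The plan is to follow the template of the proof of Theorem 2.3 verbatim, replacing $\omega_1$ by $\kappa$ throughout and substituting the hypothesis of stationarily many internally approachable models admitting strongly generic conditions, with Theorem 2.7 playing the role that Theorem 2.2 played before. Concretely, let $G \times H$ be $V$-generic on $\p \times \q$. By the product lemma, $V[G \times H] = V[G][H] = V[H][G]$, where $H$ is $V[G]$-generic on $\q$ and $G$ is $V[H]$-generic on $\p$. To show $\q$ is $\kappa$-distributive in $V[G]$, it suffices to show that every function $f : \lambda \to On$ in $V[G \times H]$ with $\lambda < \kappa$ lies in $V[G]$. Fix a $\p$-name $\dot f$ in $V[H]$ with $\dot f^G = f$.

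Next I would move to $V[H]$ and apply Theorem 2.7: since $\q$ is $\kappa$-closed, and by hypothesis $V$ has stationarily many internally approachable $N \in P_\kappa(H(\theta))$ with $N \cap \kappa \in \kappa$ for which $\p$ is strongly proper for $N$, it follows that $\q$ forces $\p$ to be $\kappa$-strongly proper on a stationary set. So in $V[H]$, fixing a large regular $\theta$ with $\p, \q, \dot f \in H(\theta)$, there are stationarily many $M \in P_\kappa(H(\theta))$ with $M \cap \kappa \in \kappa$, $M \prec (H(\theta), \in, \p, \dot f)$, such that every condition in $M \cap \p$ has a strongly $(M,\p)$-generic extension. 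By a density argument using the $V[H]$-genericity of $G$, for some such $M$ the filter $G$ contains a strongly $(M,\p)$-generic condition, so $g_M := G \cap M$ is a $V[H]$-generic filter on $M \cap \p$. Since $\dot f \in M$, for each $\xi < \lambda$ the dense set $D_\xi$ of conditions deciding $\dot f(\xi)$ is in $M$, hence $D_\xi \cap M$ is dense in $M \cap \p$ and is met by $g_M$; it follows that $f$ is definable over $V[H][g_M]$ from $\dot f$ and $g_M$, so $f \in V[H][g_M]$.

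The final step is the size bookkeeping, which is where the choice of ``internally approachable'' and $M \cap \kappa \in \kappa$ matters. Since $M \cap \kappa \in \kappa$, the poset $M \cap \p$ has size less than $\kappa$; and since $\q$ is $\kappa$-closed, $\kappa$-closed forcing adds no new subsets of ordinals of size less than $\kappa$, so $M \cap \p \in V$ — indeed $M \cap \p$ is a set of ordinals (after coding) of size less than $\kappa$ computed in $V[H]$, hence in $V$. As $M \cap \p$ and $\q$ both lie in $V$, the product lemma gives $V[H][g_M] = V[g_M][H]$. Now $M \cap \p$ has size less than $\kappa$, so it is $\kappa$-c.c., and classical Easton's lemma (Theorem 2.1) shows $\q$ is $\kappa$-distributive in $V[g_M]$. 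Since $f \in V[g_M][H]$ and $f : \lambda \to On$ with $\lambda < \kappa$, we conclude $f \in V[g_M]$. Finally $V[g_M] = V[G \cap M] \subseteq V[G]$, so $f \in V[G]$, as desired.

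The main obstacle, as in Theorem 2.3, is verifying that $M \cap \p \in V$: one must be careful that $\kappa$-closed forcing over $V$ cannot introduce a new subposet of $\p$ of size less than $\kappa$, which uses both $M \cap \kappa \in \kappa$ (to bound the size) and $\kappa$-distributivity of $\q$ over $V$ (to pull the coded set back down to $V$). A secondary point to check is that Theorem 2.7 genuinely applies here — namely that the internally approachable models supplied by hypothesis are exactly the ones for which the conclusion ``$\q$ forces $\p$ to be $\kappa$-strongly proper on a stationary set'' holds; this is built into the statement of Theorem 2.7, so no new work is required, but it is the reason the hypothesis of this theorem is phrased in terms of internally approachable models rather than arbitrary ones.
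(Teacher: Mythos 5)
Your proof is correct and is precisely the modification of the proof of Theorem 2.3 that the paper intends (the paper leaves this as an ``obvious modification,'' with Theorem 2.7 replacing Theorem 2.2, which is exactly how you proceed). The only point left implicit is that $M$ should be chosen with $\lambda \in M$, so that $M \cap \kappa \in \kappa$ yields $\lambda \subseteq M$ and hence each dense set $D_\xi$, $\xi < \lambda$, lies in $M$; this is easily arranged by stationarity.
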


\begin{thm}
Let $\mu < \kappa$ be regular cardinals with $2^{<\mu} < \kappa$. 
Let $T$ be a tree whose height is an ordinal with 
cofinality at least $\kappa$ such that 
the levels of $T$ all have size less than $2^\mu$. 
Suppose that $\p$ and $\q$ are forcing posets, where $\q$ is 
$\kappa$-closed, and assume that 
for all large enough regular cardinals $\theta$, 
there are stationarily many internally approachable models 
$N \in P_{\kappa}(H(\theta))$ with $N \cap \kappa \in \kappa$ 
such that $\p$ is strongly proper for $N$. 
Then $\p \times \q$ does not add new cofinal branches to $T$.
\end{thm}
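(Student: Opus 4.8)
The plan is to mirror the proof of Theorem 2.6 verbatim, replacing each ingredient with its higher-cardinal analogue from the hypotheses. The setup: let $G \times H$ be $V$-generic on $\p \times \q$, and by the product lemma write $V[G \times H] = V[H][G]$, viewing $G$ as $V[H]$-generic on $\p$ and $H$ as $V$-generic on $\q$. The goal is to show $V[G\times H]$ has no cofinal branch of $T$ that is not already in $V$. I would argue in two stages: first that $V[H]$ has no new cofinal branches, and then that $\p$ adds none over $V[H]$.

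\emph{Stage one.} Since $\q$ is $\kappa$-closed and $\cf(\ht(T)) \ge \kappa$, a standard argument (the tree-of-conditions / fusion argument underlying \cite[Lemma 23.1]{cummings}, generalized from $\omega_1$ to $\kappa$) shows $\q$ adds no cofinal branch to $T$ — one builds a binary tree of conditions of height $< \kappa$ forcing incompatible values at levels of $T$, then uses $\kappa$-closure to extend past the point where the $2^{<\mu} < \kappa$ many branchings would force the branch; the bound $|{\rm Lev}_\alpha(T)| < 2^\mu$ together with $2^{<\mu} < \kappa$ is exactly what makes the counting work. So $V[H]$ has no cofinal branch of $T$ outside $V$.

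\emph{Stage two.} Now apply Theorem 2.7: since $\q$ is $\kappa$-closed and there are stationarily many internally approachable $N \in P_\kappa(H(\theta))$ with $N \cap \kappa \in \kappa$ for which $\p$ is strongly proper, Theorem 2.7 tells us $\q$ forces that $\p$ is $\kappa$-strongly proper on a stationary set. Hence in $V[H]$, $\p$ is $\kappa$-strongly proper on a stationary set, so by Theorem 1.9 (the Mitchell result) $\p$ forces over $V[H]$ that any set of ordinals all of whose $<\kappa$-sized pieces lie in $V[H]$ is itself in $V[H]$. A cofinal branch $b$ of $T$ of length $\ht(T)$ has, for each $\alpha < \ht(T)$, an initial segment $b \restriction \alpha$ which is a set of size $< \kappa$ (indeed of size $|\alpha|$, and we may harmlessly arrange $\ht(T)$ to be handled by considering initial segments indexed by a cofinal set of $\alpha$ of size $< \kappa$ each); in $V[H][G]$ each $b \restriction \alpha$ is a $<\kappa$-approximation, and if $b$ were genuinely new, some $b \restriction \alpha$ would have to be new as well — but $T \restriction \alpha$ has size $< \kappa$, so $b \restriction \alpha$ is a subset of a $V[H]$-set of size $< \kappa$ and hence lies in $V[H]$ by the approximation property. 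Thus $b \in V[H]$, contradicting stage one unless $b \in V$.

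\emph{Main obstacle.} The delicate point is not the forcing-axiom machinery — Theorems 2.7 and 1.9 do the heavy lifting cleanly — but rather the bookkeeping in stage two ensuring that "new cofinal branch" really reduces to "new $<\kappa$-sized initial segment." One must be careful that $\ht(T)$ may have cofinality exactly $\kappa$ (not larger), so the branch is determined by a $\kappa$-sequence of its initial segments, each of size $< \kappa$; since $\p$ preserves that $V[H]$ contains all the $<\kappa$-approximations, and a branch is the union of such approximations along a cofinal sequence, the branch is coded by a set whose every $<\kappa$-piece is in $V[H]$ — and here one should note that the relevant "set of ordinals $X$" in Theorem 1.9 is the branch itself coded as a subset of $T$, using that each level has size $< 2^\mu < \kappa$ so that proper initial segments are $<\kappa$-sized. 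Once this coding is set up correctly, the two stages combine to give the result, exactly as in Theorem 2.6. The analogous modification of Theorem 2.6's proof, with $\omega_1$ replaced by $\kappa$ and "countable" by "of size $< \kappa$," is routine, and I would leave the detailed verification to the reader as the statement itself suggests.
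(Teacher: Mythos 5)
Your two-stage skeleton (product lemma; a generalized no-new-branch lemma for $\kappa$-closed forcing; then Theorem 2.7 followed by Theorem 1.9) is exactly the modification of Theorem 2.6 that the paper intends, and stage one is essentially right. The gap is in your verification, in stage two, of the hypothesis of Theorem 1.9. You use ``each level has size $<2^\mu<\kappa$,'' but $2^\mu<\kappa$ is not among the hypotheses --- only $2^{<\mu}<\kappa$ is (in the motivating case $\mu=\omega$, $\kappa=\omega_1$ your assumption amounts to CH). Likewise, an initial segment of the branch, $b\cap T\restriction\alpha$, has size $|\alpha|$, which need not be $<\kappa$: only the cofinality of the height of $T$ is assumed to be at least $\kappa$, not the height itself, so restricting to a cofinal set of levels does not help. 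Most seriously, the inference ``$b\restriction\alpha$ is a subset of a $V[H]$-set of size $<\kappa$ and hence lies in $V[H]$ by the approximation property'' is a non sequitur: the approximation property concludes $X\in V[H]$ from the assumption that all traces $X\cap a$ with $a\in V[H]$, $|a|<\kappa$, lie in $V[H]$; it does not assert that subsets of small $V[H]$-sets lie in $V[H]$ (a new subset of $\omega$ is a subset of the small ground-model set $\omega$). For the same reason, ``if $b$ were genuinely new, some $b\restriction\alpha$ would have to be new as well'' is false for cofinal branches; ruling out exactly this situation is the point of the approximation property.

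The missing idea, which replaces all of this, is that since $b$ is a \emph{cofinal} branch of the ground-model tree $T$, for every $\alpha$ below the height of $T$ the set $b\cap\{t\in T:\mathrm{lev}(t)<\alpha\}$ equals the set of $T$-predecessors of level $<\alpha$ of any node of $b$ of level $\ge\alpha$, and hence lies in $V$, regardless of its size. Now, coding the nodes of $T$ by ordinals and given any $a\in V[H]$ with $|a|<\kappa$, the levels of the nodes in $a$ form fewer than $\kappa$ ordinals below the height of $T$, which has cofinality at least $\kappa$, so they are bounded by some $\alpha$ below the height; therefore $b\cap a=(b\cap\{t:\mathrm{lev}(t)<\alpha\})\cap a\in V[H]$. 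This verifies the hypothesis of Theorem 1.9, applied in $V[H]$ where $\p$ is $\kappa$-strongly proper on a stationary set by Theorem 2.7, so $b\in V[H]$, and stage one then gives $b\in V$. Note that $\mu$, the bound $2^\mu$ on the levels, and $2^{<\mu}<\kappa$ are needed only in stage one; stage two uses only $\cf$ of the height $\ge\kappa$ and the $\kappa$-approximation property.
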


\section{Weak approximation and guessing}

We now begin the main topic of the paper by introducing a 
weak form of guessing. 
We will show that the existence of stationarily 
many weakly guessing 
models implies many of the same strong consequences as 
the existence of stationarily many $\omega_1$-guessing models.

For the remainder of the paper, we will say that $N$ is an 
\emph{elementary substructure} to mean that $N$ is an elementary 
substructure of some transitive set which models 
{\textsf{ZFC} - \textrm{Powerset}} and correctly computes $\omega_1$.

\begin{definition}
Let $N$ be set of size $\omega_1$ with $\omega_1 \subseteq N$.

Let $\kappa \in N$ be an ordinal such that $N$ models that 
$\kappa$ is regular uncountable. 
We say that $N$ is 
\emph{weakly $\kappa$-guessing} 
if whenever $f : \sup(N \cap \kappa) \to On$ is a function 
such that for cofinally many $\alpha < \sup(N \cap \kappa)$, 
$f \restriction \alpha \in N$, then there is a function 
$g \in N$ with domain $\kappa$ such that $g \restriction \sup(N \cap \kappa) = f$. 

We say that $N$ is \emph{weakly guessing} if for any $\kappa \in N$ which 
$N$ models is regular uncountable, 
$N$ is weakly $\kappa$-guessing.
\end{definition}

Note that in the case that $\kappa = \omega_1$, $N$ is weakly $\omega_1$-guessing 
iff whenever $f : \omega_1 \to On$ is a function such that for all 
$\alpha < \omega_1$, $f \restriction \alpha \in N$, then $f \in N$.

\begin{lemma}
Let $N$ be an elementary substructure of size $\omega_1$ with 
$\omega_1 \subseteq N$. 
Let $\kappa \in N$ be a regular uncountable cardinal, and assume 
that $N$ is weakly $\kappa$-guessing. 
Then $\cf(\sup(N \cap \kappa)) = \omega_1$.
\end{lemma}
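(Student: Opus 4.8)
The plan is to show that $\sup(N \cap \kappa)$ cannot have cofinality $\omega$. Write $\gamma := \sup(N \cap \kappa)$. Since $N$ has size $\omega_1$ and $\omega_1 \subseteq N$, we have $|N \cap \kappa| = \omega_1$, so $\cf(\gamma) \le \omega_1$. It therefore suffices to rule out $\cf(\gamma) = \omega$. (We implicitly use that $N \cap \kappa$ is cofinal in $\gamma$, so $\cf(\gamma) = \cf(\ot(N \cap \kappa))$ is either $\omega$ or $\omega_1$; if $\gamma \in N$ then $N$ would model $\cf(\gamma) = \kappa$ and elementarity gives a cofinal map, contradicting $|N \cap \gamma| = \omega_1 < \kappa$ once $\kappa > \omega_2$, but in general it is cleanest to argue directly against $\cf(\gamma)=\omega$.)

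So suppose toward a contradiction that $\cf(\gamma) = \omega$. First I would fix, inside $N$ (using elementarity and that $N$ models $\kappa$ is regular uncountable, hence in particular a cardinal and an ordinal of size $\kappa$ in $N$), a bijection $e : \kappa \to \kappa \times \kappa$ in $N$, or more simply just work with a fixed surjection. The key move is to build a function $f : \gamma \to On$ that has all proper initial segments in $N$ but is \emph{not} the restriction of any $g \in N$ with domain $\kappa$, contradicting that $N$ is weakly $\kappa$-guessing. To do this, pick an increasing cofinal $\omega$-sequence $\langle \alpha_n : n < \omega \rangle$ in $N \cap \kappa$ with supremum $\gamma$ (possible since $\cf(\gamma) = \omega$ and $N \cap \kappa$ is cofinal; note the sequence itself need not be in $N$). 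Enumerate the functions in $N$ with domain $\kappa$: since $|N| = \omega_1$, list them as $\langle g_\xi : \xi < \omega_1 \rangle$. Now diagonalize: define $f$ on the block $[\alpha_n, \alpha_{n+1})$ so as to differ from $g_n \restriction \gamma$ somewhere in that block — concretely, since $\alpha_n < \gamma$ and each $g_n$ is a fixed function, choose some $\delta_n \in [\alpha_n,\alpha_{n+1})$ and set $f(\delta_n) \ne g_n(\delta_n)$, filling in the rest of $f$ by copying some fixed element of $N$ (say $f(\beta) = 0$ elsewhere).

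The difficulty is arranging that every \emph{proper} initial segment $f \restriction \beta$ (for $\beta < \gamma$) lies in $N$, while $f$ itself disagrees with all of $g_0, g_1, \dots$ — but a countable diagonalization only kills countably many $g_\xi$, whereas $N$ has $\omega_1$ many such functions. This is the real obstacle, and it forces a correction to the naive plan: one should diagonalize not against an enumeration of all of $N$'s functions, but rather use weak guessing against \emph{one carefully chosen} $f$. The cleaner approach: given $\cf(\gamma) = \omega$, fix $\langle \alpha_n : n<\omega\rangle$ cofinal in $\gamma$ with each $\alpha_n \in N \cap \kappa$, and let $f : \gamma \to \{0,1\}$ be the indicator of $\{\alpha_n : n < \omega\}$. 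Each proper initial segment $f \restriction \beta$ with $\beta < \gamma$ is the indicator of $\{\alpha_n : \alpha_n < \beta\}$, a \emph{finite} set of ordinals below $\beta < \gamma$, all of whose members are in $N$; since $N$ contains all its finite subsets and is closed under this coding, $f \restriction \beta \in N$. By weak $\kappa$-guessing, there is $g \in N$ with domain $\kappa$ and $g \restriction \gamma = f$. Then $g^{-1}(\{1\}) \in N$, and by elementarity $N$ thinks $g^{-1}(\{1\})$ is some subset of $\kappa$; but $g^{-1}(\{1\}) \cap \gamma = \{\alpha_n : n < \omega\}$ is a cofinal $\omega$-sequence in $\gamma = \sup(N \cap \kappa)$. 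Now $A := g^{-1}(\{1\}) \in N$, so by elementarity $\sup(A)$, if $A$ is bounded in $\kappa$, is an element of $N \cap \kappa$, hence $\sup(A) < \gamma$ — but $\sup(A) \ge \sup(A \cap \gamma) = \gamma$, contradiction; and if $A$ is unbounded in $\kappa$ then $N$ models $A$ is unbounded, so by elementarity $A \cap \delta$ is unbounded in $\delta$ for a club (in $N$) of $\delta$ — in particular we can find $\delta \in N \cap \kappa$ with $\delta > \alpha_0$ such that $A \cap \delta$ is cofinal in $\delta$, forcing infinitely many $\alpha_n < \delta < \gamma$, which is fine, but then $\sup(A \cap \delta) = \delta \in N$ while $A \cap \delta \subseteq \{\alpha_n : n<\omega\}$ — so the $\alpha_n$ below $\delta$ are cofinal in $\delta$, meaning $\delta$ is an $\omega$-limit of the $\alpha_n$'s that is $<\gamma$; choosing the $\alpha_n$ to be, say, a strictly increasing sequence with no interior $\omega$-limit points in $N\cap\kappa$ (e.g. by thinning so that between consecutive $\alpha_n$ there is always an element of $N \cap \kappa$ that is a successor in $\ot$, which is automatic), this too is contradictory. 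I expect the bookkeeping in this last paragraph to be the main obstacle; the right packaging is probably to observe directly that $g^{-1}(\{1\}) \cap \gamma$ being an $\omega$-cofinal subset of $\gamma$ with $g^{-1}(\{1\}) \in N$ contradicts the fact that any member of $N$ which is a subset of $\kappa$ has $N$-computable sup behavior on $N \cap \kappa$, and then conclude $\cf(\gamma) = \omega_1$.
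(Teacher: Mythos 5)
Your core construction is exactly the paper's: assuming $\cf(\gamma)=\omega$ for $\gamma:=\sup(N\cap\kappa)$, you guess the characteristic function $f$ of an increasing sequence $\langle \alpha_n : n<\omega\rangle$ of elements of $N\cap\kappa$ cofinal in $\gamma$, noting that $f\restriction\beta\in N$ for the cofinally many $\beta\in N\cap\kappa$ (which is all the definition requires; for $\beta\notin N$ it in fact fails, since $\beta=\dom(f\restriction\beta)$ would then lie in $N$), and you correctly discarded your first idea of diagonalizing against all $\omega_1$ many functions of $N$. Where you differ is the endgame. The paper finishes in one elementarity step: for every $\beta\in N\cap\kappa$, $g\restriction\beta=f\restriction\beta$ is the characteristic function of a finite set, so since $g,\kappa\in N$, elementarity gives this for every $\beta<\kappa$, in particular for $\beta=\gamma$, making $f$ code a finite set --- an immediate contradiction, with no case analysis. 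Your route through $A:=g^{-1}(\{1\})\in N$ also works: the bounded case is fine as written, and the unbounded case is correct in outcome but garbled in justification. There the contradiction is immediate: since $\langle\alpha_n\rangle$ is strictly increasing with supremum $\gamma$, only \emph{finitely} many $\alpha_n$ lie below any $\delta<\gamma$, so once you produce $\delta\in N\cap\kappa$ with $\delta>\alpha_0$ and $A\cap\delta$ cofinal in $\delta$ (hence infinite, yet contained in $\{\alpha_n:n<\omega\}\cap\delta$), you are done. Your remark that ``infinitely many $\alpha_n<\delta<\gamma$ ... is fine'' is exactly backwards, and the subsequent thinning of the sequence is unnecessary (an increasing $\omega$-sequence converging to $\gamma$ has no limit points below $\gamma$); the thinning condition you propose does not accomplish anything. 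An alternative clean finish for the unbounded case: the increasing enumeration of $A$ lies in $N$ and maps the $\omega_1$ many ordinals of $N\cap\kappa$ into $A\cap N\subseteq A\cap\gamma$, contradicting $|A\cap\gamma|=\omega$. In short: same key idea as the paper, with a clumsier extraction of the contradiction; the paper's transfer-by-elementarity is the ``right packaging'' you were looking for in your last sentence.
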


\begin{proof}
Suppose for a contradiction that $b$ is a cofinal subset of $N \cap \kappa$ 
with order type $\omega$. 
Let $f : \sup(N \cap \kappa) \to 2$ be the function such that 
$f(\gamma) = 1$ iff $\gamma \in b$. 
Then easily for all $\alpha \in N \cap \kappa$, $f \restriction \alpha \in N$, 
since $f \restriction \alpha$ is the characteristic function of the finite 
set $b \cap \alpha$.

Since $N$ is weakly $\kappa$-guessing, there is a function 
$g : \kappa \to On$ in $N$ such that $g \restriction \sup(N \cap \kappa) = f$. 
For all $\alpha \in N \cap \kappa$, 
$g \restriction \alpha = f \restriction \alpha$ is the characteristic 
function of a finite set. 
So by the elementarity of $N$, for all $\alpha \in \kappa$, 
$g \restriction \alpha$ is the characteristic function of a finite set. 
In particular, $g \restriction \sup(N \cap \kappa) = f$ is the 
characteristic function of a finite set, which contradicts the 
fact that $b$ is infinite.
\end{proof}

As with the property of being $\omega_1$-guessing, we can characterize 
the property of a set being weakly guessing 
in terms of an approximation property 
of its transitive collapse.

\begin{definition}
Let $W_1$ and $W_2$ be transitive sets or classes 
with $W_1 \subseteq W_2$, and let $\lambda \in W_1$ be an ordinal. 
We say that the pair $(W_1,W_2)$ has the 
\emph{weak $\lambda$-approximation property} if whenever 
$f : \lambda \to On$ is a function in $W_2$ such that for all 
$\alpha < \lambda$, $f \restriction \alpha \in W_1$, then $f \in W_1$.
\end{definition}

\begin{lemma}
If $(W_1,W_2)$ has the $\omega_1$-approximation property, 
where $W_1$ is closed under intersections, 
then $(W_1,W_2)$ has the weak $\lambda$-approximation property for 
all ordinals $\lambda \in W_1$ with uncountable cofinality in $W_1$.
\end{lemma}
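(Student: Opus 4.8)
The plan is to argue by induction on $\lambda$, using the hypothesis that $W_1$ is closed under intersections together with the $\omega_1$-approximation property to break a function $f : \lambda \to On$ in $W_2$ (with all proper initial segments in $W_1$) into manageable pieces. First I would dispose of the base case: if $\lambda$ has uncountable cofinality in $W_1$ and $\cf^{W_1}(\lambda) = \omega_1$, then fix in $W_1$ a strictly increasing cofinal sequence $\langle \alpha_i : i < \omega_1 \rangle$ in $\lambda$. Each $f \restriction \alpha_i$ lies in $W_1$ by hypothesis, so I would like to thread these together. The natural move is to consider, for each $i < \omega_1$, the restriction $f \restriction [\alpha_i, \alpha_{i+1})$, reindex it as a function on a bounded ordinal, and use it to build a single function $F : \omega_1 \to W_1$ by $F(i) = f \restriction \alpha_i$ (or a suitable code thereof). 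Since $W_1$ is closed under intersections and $\langle \alpha_i \rangle \in W_1$, the family $\{F(i) : i < \omega_1\}$ consists of elements of $W_1$; the task is then to see that $F$ itself — a single $\omega_1$-indexed object — is in $W_1$.

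To run that last step I would encode $F$ as a bounded set of ordinals $d \subseteq W_1 \cap On$: using a pairing function in $W_1$ and the sequence $\langle \alpha_i \rangle \in W_1$, let $d$ be the set of pairs $\langle i, \beta \rangle$ with $\beta < \lambda$ and $f(\beta) \in$ (some fixed bound), arranged so that $d$ codes $f$. For any $b \in W_1$ countable in $W_1$, the set $b \cap d$ only involves countably many coordinates $i$, hence is determined by $f \restriction \alpha_i$ for $i$ below some countable ordinal $\delta < \omega_1$, hence by $F \restriction \delta$; and $F \restriction \delta$ is recoverable inside $W_1$ from $\{f \restriction \alpha_i : i < \delta\}$, which is the range of a function that — being countable in $W_1$ and having all its initial segments in $W_1$ — lies in $W_1$ by a second application of the $\omega_1$-approximation property (here one uses that $W_1$ is transitive and closed under intersections to see $F \restriction \delta \in W_1$). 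Thus $b \cap d \in W_1$ for all such $b$, so the $\omega_1$-approximation property yields $d \in W_1$, whence $f \in W_1$ by decoding. For the general case, when $\cf^{W_1}(\lambda) = \mu > \omega_1$, I would reduce to the previous case: fix in $W_1$ a continuous increasing cofinal sequence $\langle \gamma_\xi : \xi < \mu \rangle$; each $f \restriction \gamma_\xi$ has uncountable cofinality index and all its proper initial segments in $W_1$, so by induction on $\xi$ (or directly, since $\gamma_\xi < \lambda$ so this is the inductive hypothesis applied at a smaller ordinal) we get $f \restriction \gamma_\xi \in W_1$; then repeat the coding argument above with $\langle \gamma_\xi \rangle$ in place of $\langle \alpha_i \rangle$, noting that a set $b \in W_1$ countable in $W_1$ still meets only countably many blocks.

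The main obstacle I anticipate is the bookkeeping in the coding step: one must be careful that the code $d$ for $f$ is genuinely a \emph{bounded} subset of $W_1 \cap On$ (so that the $\omega_1$-approximation property of $(W_1,W_2)$ applies — recall that property is stated only for bounded $d$), and that intersecting $d$ with a $W_1$-countable set really does localize to countably many of the blocks in a way that $W_1$ can see. This is where closure of $W_1$ under intersections is used twice — once to form the relevant countable subfamilies and once implicitly to know $W_1$ is closed under the decoding operations — and getting the two nested applications of the approximation property to fit together cleanly (the inner one at level $\omega_1$, the outer one for the bounded set $d$) is the delicate part. I do not expect any genuinely hard idea beyond this; the rest is routine coding and an induction on $\cf^{W_1}(\lambda)$.
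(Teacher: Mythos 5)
Your proposal contains the right germ --- a set $b \in W_1$ that is countable in $W_1$ can only meet $f$ in a part whose domain is bounded below $\lambda$, and that part is covered by an initial segment of $f$, hence lies in $W_1$ --- but the machinery you build around it both obscures this and introduces steps that do not work as written. The paper's proof is two lines: to apply the $\omega_1$-approximation property to $f$ itself, it suffices to check that $a \cap f \in W_1$ for every $a \in W_1$ countable in $W_1$; since $\lambda$ has uncountable cofinality in $W_1$, the first coordinates below $\lambda$ of elements of $a$ are bounded by some $\beta < \lambda$, so $a \cap f = a \cap (f \restriction \beta)$, which is in $W_1$ because $f \restriction \beta \in W_1$ by hypothesis and $W_1$ is closed under intersections. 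No cofinal $\omega_1$-sequence, no reindexing into blocks, no induction on $\cf^{W_1}(\lambda)$, and no coding of $f$ as a set of ordinals is needed.

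Two of your steps are genuine gaps. First, the claim that $F \restriction \delta \in W_1$ ``by a second application of the $\omega_1$-approximation property'' because it is ``countable in $W_1$ and has all its initial segments in $W_1$'' is circular and misapplies the property: being countable \emph{in} $W_1$ already presupposes membership in $W_1$, and the approximation property provides no such criterion for countable objects --- its hypothesis (that $b \cap F\restriction\delta \in W_1$ for all $W_1$-countable $b \in W_1$) is exactly what would remain to be verified. The fact you want is simply that $F \restriction \delta$ is computable from $f \restriction \sup_{i<\delta}\alpha_i$ (in $W_1$ by hypothesis) together with $\langle \alpha_i : i<\omega_1\rangle$; but even that computation, like your choice of a cofinal sequence of type $\omega_1$ (or a continuous one of type $\mu$) and your pairing/decoding apparatus, requires $W_1$ to satisfy a substantial fragment of set theory, whereas the lemma assumes only that $W_1$ is transitive and closed under intersections. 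So as written your argument establishes the lemma only under extra closure hypotheses on $W_1$ that the direct proof never needs. Second, the boundedness problem you yourself flag for the code $d$ is never resolved (the range of $f$ may well be cofinal in $W_1 \cap On$, so a Gödel-pair code of the graph need not be a bounded subset of $W_1 \cap On$); the paper avoids manufacturing any such code by applying the approximation property to $f$ directly. Finally, the induction in your ``general case'' is vacuous: every $f \restriction \gamma_\xi$ with $\gamma_\xi < \lambda$ is already in $W_1$ by hypothesis, and the $\gamma_\xi$ of countable cofinality would not even fall under your inductive statement.
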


\begin{proof}
Let $f : \lambda \to On$ be a function in $W_2$, 
where $W_1 \models \cf(\lambda) \ge \omega_1$, and 
suppose that for all $\alpha < \lambda$, $f \restriction \alpha \in W_1$. 
We claim that $f$ is in $W_1$. 
Since $(W_1,W_2)$ has the $\omega_1$-approximation property, it suffices 
to show that if $a$ is countable in $W_1$, then $a \cap f \in W_1$. 
Since $\lambda$ has uncountable cofinality in $W_1$, 
there is $\beta < \lambda$ such that $\dom(a) \cap \lambda \subseteq \beta$. 
Then $a \cap f = a \cap (f \restriction \beta)$. 
Since $a$ and $f \restriction \beta$ are in $W_1$, so is $a \cap f$.
\end{proof}

\begin{lemma}
Let $N$ be an elementary substructure of size $\omega_1$ such that 
$\omega_1 \subseteq N$. 
Let $\kappa \in N$ be a regular uncountable cardinal. 
Then $N$ is weakly $\kappa$-guessing iff 
the pair $(N_0,V)$ has the weak $\pi(\kappa)$-approximation property, 
where $\pi : N \to N_0$ is the transitive collapse of $N$.
\end{lemma}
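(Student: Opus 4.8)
The plan is to unwind the definitions on both sides and check that the transitive collapse $\pi$ converts the ``$N$-membership'' witnesses into ``$N_0$-membership'' witnesses, and conversely. Let me write $\delta := \sup(N \cap \kappa)$ and observe first that, since $\kappa$ is regular uncountable in $N$, $\pi$ maps $N \cap \kappa$ cofinally into $\pi(\kappa)$, and in fact $\pi \restriction (N \cap \kappa)$ is the unique order isomorphism of $N \cap \kappa$ onto $\pi(\kappa)$; thus $\pi(\kappa) = \ot(N \cap \kappa)$, and for $\alpha \in N \cap \kappa$ we have $\pi(\alpha) = \ot(N \cap \alpha)$, which is less than $\pi(\kappa)$. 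The key translation point is that ``$f \restriction \alpha \in N$ for cofinally many $\alpha < \delta$'' should correspond to ``$h \restriction \beta \in N_0$ for all $\beta < \pi(\kappa)$'', where $h$ is the function on $\pi(\kappa)$ obtained from $f$ by precomposing with the inverse of the collapse on the domain side.

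For the forward direction, assume $N$ is weakly $\kappa$-guessing; I want the weak $\pi(\kappa)$-approximation property for $(N_0, V)$. So let $h : \pi(\kappa) \to \mathrm{On}$ be in $V$ with $h \restriction \beta \in N_0$ for every $\beta < \pi(\kappa)$; I must show $h \in N_0$. Define $f : \delta \to \mathrm{On}$ by $f(\xi) = h(\pi(\xi))$ for $\xi \in N \cap \kappa$ and extend arbitrarily (say by $0$) on $\delta \setminus N$; actually it is cleaner to note that $N \cap \kappa$ need not be an ordinal, so I should instead work directly with the restriction structure. For each $\alpha \in N \cap \kappa$, $h \restriction \pi(\alpha) \in N_0$, so its preimage $\pi^{-1}(h \restriction \pi(\alpha))$ is a function in $N$ whose domain is $N \cap \alpha$ and which agrees with $f$ there; I can use this and the coherence to build a genuine $f : \delta \to \mathrm{On}$ with $f \restriction \alpha \in N$ for cofinally many $\alpha$ — the point being that $f \restriction \alpha$ for $\alpha \in N \cap \kappa$ is the transitive-collapse preimage of $h \restriction \pi(\alpha)$, which lies in $N$. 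Then weak $\kappa$-guessing gives $g \in N$ with domain $\kappa$ and $g \restriction \delta = f$; applying $\pi$, the function $\pi(g)$ lies in $N_0$, has domain $\pi(\kappa)$, and agrees with $h$ on $\pi(\kappa)$, so $h = \pi(g) \in N_0$. The converse direction is entirely symmetric: given $f : \delta \to \mathrm{On}$ with $f \restriction \alpha \in N$ cofinally, transfer via $\pi$ to get $h : \pi(\kappa) \to \mathrm{On}$ in $V$ with all proper initial segments in $N_0$, apply the weak $\pi(\kappa)$-approximation property to get $h \in N_0$, and pull back under $\pi^{-1}$ to a $g \in N$ with domain $\kappa$ extending $f$.

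The main obstacle — really the only subtle point — is the bookkeeping around the fact that $N \cap \kappa$ is generally not an initial segment of $\kappa$, so ``$f \restriction \alpha$'' for $\alpha \in N \cap \kappa$ has domain $\alpha$ (a bona fide ordinal), not $N \cap \alpha$. I need to verify carefully that $\pi(f \restriction \alpha)$ is exactly $h \restriction \pi(\alpha)$, where $h$ is defined on the ordinal $\pi(\kappa)$; this rests on the observation that $f \restriction \alpha \in N$ together with elementarity forces $f \restriction \alpha$ to be a function with domain the ordinal $\alpha$, and $\pi$ restricted to $\alpha \cap N$ is the collapse onto $\pi(\alpha)$. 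One should also record that $N$ being weakly $\kappa$-guessing already implies $\cf(\delta) = \omega_1$ by Lemma~3.3, so $\delta$ is a genuine limit of $N \cap \kappa$ of the expected shape; this is not strictly needed for the equivalence but clarifies why the cofinal-initial-segment hypothesis is the right one. Once the collapse-translation of initial segments is pinned down, the rest is a direct application of the two definitions in each direction, with $\pi$ and $\pi^{-1}$ moving witnesses back and forth.
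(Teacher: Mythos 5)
Your proposal is correct and takes essentially the same route as the paper's proof: in each direction you form the coherent union of the collapses (or preimages) of the initial segments $f \restriction \alpha$ for $\alpha \in N \cap \kappa$, apply the hypothesis on the other side to get a total function, and pull it back through $\pi$, using that $\pi$ maps $N \cap \kappa$ onto the ordinals below $\pi(\kappa)$. The only blemish is the passing claim that $\pi^{-1}(h \restriction \pi(\alpha))$ has domain $N \cap \alpha$ --- its domain is the ordinal $\alpha$ itself, as you correctly note later --- and this slip does not affect the argument.
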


\begin{proof}
Assume that $N$ is weakly $\kappa$-guessing, and we will prove 
that $(N_0,V)$ has the weak $\pi(\kappa)$-approximation property. 
By Lemma 3.2, $\cf(\sup(N \cap \kappa)) = \omega_1$. 
Let $f : \pi(\kappa) \to On$ be a function such that for all 
$\alpha < \pi(\kappa)$, $f \restriction \alpha \in N_0$. 
We will show that $f \in N_0$.

Define 
$$
f^* := \bigcup \{ \pi^{-1}(f \restriction \alpha) : \alpha < \pi(\kappa) \}.
$$
It is easy to check that $f^*$ is an ordinal-valued function with 
domain $\sup(N \cap \kappa)$. 
For all $\beta \in N \cap \kappa$, 
$f^* \restriction \beta = \pi^{-1}(f \restriction \pi(\beta))$, so for 
cofinally many $\alpha < \sup(N \cap \kappa)$, $f^* \restriction \alpha \in N$. 
Since $N$ is weakly $\kappa$-guessing, there is 
$g : \kappa \to On$ in $N$ such that 
$g \restriction \sup(N \cap \kappa) = f^*$. 
It is easy to check that $\pi(g) = f$, and hence $f \in N_0$.

Conversely, assume that $(N_0,V)$ has the weak 
$\pi(\kappa)$-approximation property, and we will prove that $N$ 
is weakly $\kappa$-guessing. 
Assume that $f : \sup(N \cap \kappa) \to On$ is a function such that for 
cofinally many $\alpha < \sup(N \cap \kappa)$, $f \restriction \alpha \in N$. 
Note that for all $\beta \in N \cap \kappa = N \cap \sup(N \cap \kappa)$ , 
$f \restriction \beta \in N$.

Let 
$$
f^* := \bigcup \{ \pi(f \restriction \alpha) : \alpha \in N \cap \kappa \}.
$$
It is easy to check that $f^*$ is an ordinal-valued function with domain 
$\pi(\kappa)$, and for all 
$\beta < \pi(\kappa)$, 
$f^* \restriction \beta = \pi(f \restriction \pi^{-1}(\beta)) \in N_0$. 
Since $(N_0,V)$ has the weak $\pi(\kappa)$-approximation property, 
$f^* \in N_0$. 
Then $\pi^{-1}(f^*) \in N$, and it is easy to check that 
$\pi^{-1}(f^*) \restriction \sup(N \cap \kappa) = f$.
\end{proof}

\begin{corollary}
Let $N$ be an elementary substructure of size $\omega_1$ such that 
$\omega_1 \subseteq N$. 
If $N$ is $\omega_1$-guessing, then $N$ is weakly 
$\kappa$-guessing for all regular uncountable cardinals $\kappa \in N$.
\end{corollary}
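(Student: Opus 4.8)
The plan is to derive this corollary directly from the two preceding lemmas, namely Lemma 3.5 and Lemma 3.4, together with the characterization of $\omega_1$-guessing in terms of the transitive collapse (Lemma 1.6). Let $N$ be an elementary substructure of size $\omega_1$ with $\omega_1 \subseteq N$, and suppose $N$ is $\omega_1$-guessing. Fix a regular uncountable cardinal $\kappa \in N$; I want to show $N$ is weakly $\kappa$-guessing.

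First I would let $\pi : N \to N_0$ be the transitive collapse. Since $N$ is $\omega_1$-guessing, Lemma 1.6 gives that the pair $(N_0, V)$ has the $\omega_1$-approximation property. Next I need that $N_0$ is closed under intersections: this is immediate since $N_0$ is a transitive set modeling $\textsf{ZFC} - \textrm{Powerset}$ (being the transitive collapse of such an $N$), so it is closed under finite intersections of its elements. Now apply Lemma 3.5 with $\lambda = \pi(\kappa)$: since $\kappa$ is regular uncountable in $N$ and $\pi$ is an isomorphism, $\pi(\kappa)$ has uncountable cofinality in $N_0$, so Lemma 3.5 yields that $(N_0, V)$ has the weak $\pi(\kappa)$-approximation property.

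Finally, I would invoke Lemma 3.6, which states precisely that $N$ is weakly $\kappa$-guessing if and only if $(N_0, V)$ has the weak $\pi(\kappa)$-approximation property. Combining this with the previous paragraph gives that $N$ is weakly $\kappa$-guessing. Since $\kappa$ was an arbitrary regular uncountable cardinal in $N$, this completes the proof.

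I do not expect any real obstacle here; the corollary is essentially a bookkeeping composition of Lemmas 1.6, 3.5, and 3.6. The only point requiring a moment's attention is verifying the hypotheses of Lemma 3.5: that $N_0$ is closed under intersections (which follows from transitivity and the ambient theory) and that $\pi(\kappa)$ has uncountable cofinality in $N_0$ (which follows from elementarity of $\pi^{-1}$, since $N$ models $\kappa$ is regular uncountable). One could alternatively give a direct proof mimicking the argument of Lemma 3.2, showing that if $f : \sup(N \cap \kappa) \to On$ has all proper initial segments in $N$, then using regularity of $\kappa$ to bound the domain of any countable $b \in N$ below some element of $N \cap \kappa$, one gets $f \cap b = (f \restriction \beta) \cap b \in N$, and then applies the $\omega_1$-guessing property of $N$ to the set of ordinals coding $f$; but routing through the collapse lemmas is cleaner.
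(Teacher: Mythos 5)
Your proof is correct and follows essentially the same route as the paper's: collapse $N$, use the characterization of $\omega_1$-guessing via the $\omega_1$-approximation property of $(N_0,V)$, pass to the weak $\pi(\kappa)$-approximation property since $\pi(\kappa)$ has uncountable cofinality in $N_0$, and then apply the collapse characterization of weak $\kappa$-guessing (your lemma references are off by one, but the cited content is the right one in each case). Your explicit check that $N_0$ is closed under intersections is a detail the paper leaves implicit, and is verified correctly.
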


\begin{proof}
Let $\pi : N \to N_0$ be the transitive collapse of $N$. 
Suppose that $N$ is $\omega_1$-guessing. 
Then by Lemma 1.5, $(N_0,V)$ has the $\omega_1$-approximation 
property. 
Let $\kappa \in N$ be regular uncountable. 
Then $N_0$ models that $\pi(\kappa)$ is regular uncountable, and 
hence has uncountable cofinality. 
So $(N_0,V)$ has the weak $\pi(\kappa)$-approximation property 
by Lemma 3.4. 
By Lemma 3.5, $N$ is weakly $\kappa$-guessing.
\end{proof}

\begin{definition}
Let $\kappa$ be a regular uncountable cardinal. 
A forcing poset $\p$ is said to have the 
\emph{weak $\kappa$-approximation property} if 
$\p$ forces that the pair $(V,V^\p)$ has the weak 
$\kappa$-approximation property.
\end{definition}

\begin{lemma}
Let $\kappa$ be a regular uncountable cardinal. 
If $\p$ has the weak $\kappa$-approximation property, then 
$\p$ forces that $\cf(\kappa) > \omega$. 
In particular, if $\p$ has the weak $\omega_1$-approximation property, 
then $\p$ preserves $\omega_1$.
\end{lemma}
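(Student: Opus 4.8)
The plan is to argue by contradiction using a single explicit function, in the spirit of the proof of Lemma 3.2. Suppose $\p$ has the weak $\kappa$-approximation property but some condition forces $\cf(\kappa) = \omega$. Fixing a $V$-generic filter $G$ below that condition, we have a strictly increasing cofinal function $e : \omega \to \kappa$ in $V[G]$. I would then define $f : \kappa \to 2$ in $V[G]$ by letting $f(\gamma) = 1$ iff $\gamma \in \ran(e)$, and check that $f$ violates the weak $\kappa$-approximation property of the pair $(V,V[G])$.

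First, for each $\alpha < \kappa$, since $e$ is strictly increasing and cofinal in $\kappa$, the set $\ran(e) \cap \alpha$ is finite; as $\alpha \in V$ and every ordinal below $\kappa$ lies in $V$, this finite set, and hence the function $f \restriction \alpha$ (its characteristic function on $\alpha$), belongs to $V$. Second, $f \notin V$: otherwise $\ran(e) = \{\gamma < \kappa : f(\gamma) = 1\}$ would be a member of $V$, giving a cofinal subset of $\kappa$ of order type $\omega$ in $V$, contradicting that $\kappa$ is regular and uncountable in $V$. Thus $f \in V[G]$ is an ordinal-valued function with domain $\kappa$, all of whose proper initial segments lie in $V$ but which is not itself in $V$, contradicting that $\p$ forces $(V,V^\p)$ to have the weak $\kappa$-approximation property. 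Hence $\p$ forces $\cf(\kappa) > \omega$.

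For the ``in particular'' clause, I would apply the above with $\kappa = \omega_1$. In any generic extension $V[G]$ by $\p$, ordinals that are countable in $V$ remain countable, so $\omega_1^{V[G]} \le \omega_1^V$. If this inequality were strict, then $\omega_1^V$ would be a countable limit ordinal in $V[G]$, hence of cofinality $\omega$ there, contradicting that $\p$ forces $\cf(\omega_1^V) > \omega$. Therefore $\omega_1^{V[G]} = \omega_1^V$, i.e., $\p$ preserves $\omega_1$.

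There is essentially no serious obstacle here; the only point requiring care is the verification that the proper initial segments of $f$ genuinely land in $V$, which reduces to the elementary observation that a strictly increasing $\omega$-sequence cofinal in $\kappa$ meets each proper initial segment of $\kappa$ in a finite set.
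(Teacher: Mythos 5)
Your proof is correct and follows essentially the same route the paper intends: the paper's proof is just the remark ``Similar to the proof of Lemma 3.2,'' i.e., the characteristic function of the range of a cofinal $\omega$-sequence, whose proper initial segments are characteristic functions of finite sets of ground-model ordinals and hence lie in $V$, exactly as you argue. Your explicit treatment of the ``in particular'' clause (via $\omega_1^{V[G]} \le \omega_1^V$ and the cofinality of a countable limit ordinal) is a fine way to fill in what the paper leaves implicit.
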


\begin{proof}
Similar to the proof of Lemma 3.2.
\end{proof}

We can also define an indestructible version of a weakly guessing model.

\begin{definition}
Let $N$ be a set of size $\omega_1$ with $\omega_1 \subseteq N$. 

Let $\kappa \in N$ be a regular uncountable cardinal. 
We say that $N$ is \emph{indestructibly weakly $\kappa$-guessing} 
if $N$ is weakly $\kappa$-guessing in any generic extension by an 
$\omega_1$-preserving forcing poset.

We say that $N$ is \emph{indestructibly weakly guessing} if for all regular uncountable 
cardinals $\kappa \in N$, $N$ is indestructibly weakly $\kappa$-guessing.
\end{definition}

\bigskip

With the idea of a weakly guessing model at hand, we 
can now introduce principles which weaken $\textsf{GMP}$ and 
$\textsf{IGMP}$.

\begin{definition}
For a cardinal $\lambda \ge \omega_2$, 
let $\textsf{wGMP}(\lambda)$ be the statement that 
there exist stationarily many sets $N \in P_{\omega_2}(H(\lambda))$ such that 
$\omega_1 \subseteq N$ and 
$N$ is weakly guessing. 
Let $\textsf{wGMP}$ be the statement that 
$\textsf{wGMP}(\lambda)$ holds for all cardinals $\lambda \ge \omega_2$.
\end{definition}

\begin{definition}
For a cardinal $\lambda \ge \omega_2$, 
let $\textsf{wIGMP}(\lambda)$ be the statement that 
there exist 
stationarily many sets $N \in P_{\omega_2}(H(\lambda))$ such that 
$\omega_1 \subseteq N$ and 
$N$ is indestructibly weakly guessing. 
Let $\textsf{wIGMP}$ be the statement that 
$\textsf{wIGMP}(\lambda)$ holds for all cardinals $\lambda \ge \omega_2$.
\end{definition}

\begin{corollary}
Let $\lambda \ge \omega_2$. 
Then $\textsf{GMP}(\lambda)$ implies $\textsf{wGMP}(\lambda)$, and 
$\textsf{IGMP}(\lambda)$ implies $\textsf{wIGMP}(\lambda)$. 
Thus, $\textsf{GMP}$ implies $\textsf{wGMP}$, and 
$\textsf{IGMP}$ implies $\textsf{wIGMP}$. 
\end{corollary}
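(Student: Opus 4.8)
The plan is to simply unwind the definitions and apply Corollary~3.6. Fix a cardinal $\lambda \ge \omega_2$ and assume $\textsf{GMP}(\lambda)$; we must produce stationarily many $N \in P_{\omega_2}(H(\lambda))$ with $\omega_1 \subseteq N$ and $N$ weakly guessing. By $\textsf{GMP}(\lambda)$ there are stationarily many $N \in P_{\omega_2}(H(\lambda))$ which are $\omega_1$-guessing. First I would note that, since we are free to intersect a stationary set with a club, we may assume in addition that each such $N$ satisfies $\omega_1 \subseteq N$ and $N \prec (H(\lambda),\in)$: the set of elementary substructures containing $\omega_1$ as a subset is club in $P_{\omega_2}(H(\lambda))$ (here we use that $\lambda \ge \omega_2$ so $\omega_1 \in H(\lambda)$). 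For such an $N$, the hypotheses of Corollary~3.6 are met — $N$ is an elementary substructure of size $\omega_1$ with $\omega_1 \subseteq N$ — so $N$ is weakly $\kappa$-guessing for every regular uncountable cardinal $\kappa \in N$, i.e.\ $N$ is weakly guessing by Definition~3.1. Hence these stationarily many $N$ witness $\textsf{wGMP}(\lambda)$.

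For the indestructible version, assume $\textsf{IGMP}(\lambda)$ and again, after intersecting with the appropriate club, fix stationarily many elementary substructures $N \in P_{\omega_2}(H(\lambda))$ with $\omega_1 \subseteq N$ that are $\omega_1$-guessing in every $\omega_1$-preserving generic extension. The point is that the argument of Corollary~3.6 relativizes to any such extension $V[G]$: in $V[G]$, $N$ is still a set of size $\omega_1$ (as $\omega_1$ is preserved) with $\omega_1 \subseteq N$ which is $\omega_1$-guessing, and $N$ remains an elementary substructure of the structure $N$ collapses to, so Corollary~3.6 applied in $V[G]$ gives that $N$ is weakly $\kappa$-guessing in $V[G]$ for every $\kappa \in N$ that $N$ models to be regular uncountable. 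Since this holds in every $\omega_1$-preserving extension, $N$ is indestructibly weakly guessing in the sense of Definition~3.9. Thus these $N$ witness $\textsf{wIGMP}(\lambda)$.

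The final sentences of the corollary, that $\textsf{GMP}$ implies $\textsf{wGMP}$ and $\textsf{IGMP}$ implies $\textsf{wIGMP}$, are then immediate by quantifying over all $\lambda \ge \omega_2$.

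I do not anticipate any genuine obstacle; the only mild care needed is the observation that ``$\omega_1$-guessing'' as used in Corollary~3.6 is applied to models $N$ that are elementary substructures of a suitable transitive model correctly computing $\omega_1$, so one wants the stationarily many witnesses supplied by $\textsf{GMP}(\lambda)$ and $\textsf{IGMP}(\lambda)$ to be taken (without loss of generality) to be elementary in $(H(\lambda),\in)$ with $\omega_1$ as a subset — which costs nothing since that requirement is club. For $\textsf{wIGMP}$ one should also remark that the class of $\omega_1$-preserving forcings is the same in the sense relevant here, so "indestructibly weakly guessing" is being tested against exactly the family of extensions against which "indestructibly guessing" was assumed.
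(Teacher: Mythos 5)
Your proof is correct and follows essentially the same route as the paper, which derives the corollary immediately from Corollary 3.6; your extra remarks about passing to the club of elementary substructures containing $\omega_1$ and about applying Corollary 3.6 inside each $\omega_1$-preserving extension are just the routine details the paper leaves implicit.
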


\begin{proof}
Immediate from Corollary 3.6.
\end{proof}

\bigskip

We now explore strong consequences of the existence of weakly 
guessing models. 
The main point is that almost all of the known consequences of the 
existence of $\omega_1$-guessing models follow from
the existence of weakly guessing models.\footnote{A 
possible exception 
is the result of Viale \cite[Section 7.2]{viale} 
that the existence of stationarily many $\omega_1$-guessing 
models which are internally unbounded implies \textsf{SCH}. 
We do not know whether \textsf{SCH} 
follows from stationarily many internally unbounded 
weakly guessing models.}

It is straightforward to check that the following three 
consequences of the existence of $\omega_1$-guessing models 
follow from the existence of weakly $\omega_1$-guessing models, 
by slight modifications 
of the proofs given in \cite{jk28}.

\begin{proposition}
Suppose that there are stationarily many 
$N$ in $P_{\omega_2}(H(\omega_3))$ such that $N$ is weakly  
$\omega_1$-guessing and 
$N \cap \omega_2$ has cofinality $\omega_1$. 
Then $\neg \textsf{AP}_{\omega_1}$.
\end{proposition}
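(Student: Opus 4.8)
The plan is to argue by contradiction, deriving $\bot$ from $\textsf{AP}_{\omega_1}$ together with the hypothesis. The underlying idea: under $\textsf{AP}_{\omega_1}$ one produces, for a well-chosen weakly $\omega_1$-guessing model $N$, a function $f : \omega_1 \to \sup(N \cap \omega_2)$ all of whose proper initial segments lie in $N$ but which is cofinal in $\sup(N \cap \omega_2) \notin N$; then weak $\omega_1$-guessing forces $f \in N$ and hence $\sup(N \cap \omega_2) \in N$, a contradiction.

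In detail, I would first invoke the standard characterization of the approachability ideal: $\textsf{AP}_{\omega_1}$ (i.e.\ $\omega_2 \in I[\omega_2]$) yields a coherent sequence $\vec C = \langle C_\alpha : \alpha < \omega_2 \rangle$ with each $C_\alpha$ a closed subset of $\alpha$ of order type at most $\omega_1$, satisfying $\gamma \in \mathrm{acc}(C_\alpha) \Rightarrow C_\gamma = C_\alpha \cap \gamma$, together with a club $C^* \subseteq \omega_2$ on which $C_\alpha$ is cofinal in $\alpha$; note that every accumulation point of a $C_\alpha$ has cofinality $\omega$. Since the internally approachable models of length $\omega_1$ — unions $N = \bigcup_{i<\omega_1} N_i$ of continuous $\subseteq$-increasing chains of countable $N_i \prec H(\omega_3)$ with $N_i \in N_{i+1}$ — that contain $\vec C$ and $C^*$ form a club in $P_{\omega_2}(H(\omega_3))$, the stationarity hypothesis lets me fix one such $N$ that is also weakly $\omega_1$-guessing with $\cf(\sup(N \cap \omega_2)) = \omega_1$.

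Writing $\delta := \sup(N \cap \omega_2)$ and $\delta_i := \sup(N_i \cap \omega_2)$, one has $N \cap \delta = N \cap \omega_2$, the $\delta_i$ form a continuous strictly increasing sequence cofinal in $\delta$ (hence a club $E \subseteq \delta$) with each $\delta_i \in N$ of cofinality $\omega$, and $\delta \notin N$. Since $C^* \in N$ is club, $\delta \in C^*$, so, using $\cf(\delta) = \omega_1$, $C_\delta$ is cofinal in $\delta$ with $\ot(C_\delta) = \omega_1$. As $E$ and $\mathrm{acc}(C_\delta)$ are clubs in the ordinal $\delta$ of uncountable cofinality, their intersection is cofinal in $\delta$; for $\delta_i$ in it, coherence gives $C_\delta \cap \delta_i = C_{\delta_i}$, which lies in $N$ because $\delta_i, \vec C \in N$. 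Hence, letting $f$ be the increasing enumeration of $C_\delta$, the restriction $f \restriction \ot(C_\delta \cap \delta_i)$ is the increasing enumeration of the set $C_\delta \cap \delta_i \in N$ and so lies in $N$; since the ordinals $\ot(C_\delta \cap \delta_i)$ are cofinal in $\omega_1$, this gives $f \restriction \alpha \in N$ for every $\alpha < \omega_1$, and weak $\omega_1$-guessing then yields $f \in N$, so $\ran(f) = C_\delta \in N$ and $\delta = \sup(C_\delta) \in N$ — contradiction.

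The step I expect to demand the most care is the selection of $N$: one must work with internally approachable $N$ precisely so that $N \cap \omega_2$ contains a club $E$ in $\delta$ — this is what makes $E \cap \mathrm{acc}(C_\delta)$ cofinal in $\delta$ and hence lets the coherence of $\vec C$ be exploited at cofinally many points of $N$ — and one must take the pieces $N_i$ countable so that the $\delta_i$ have cofinality $\omega$ and can lie in $\mathrm{acc}(C_\delta)$. One should also check that these internally approachable models do form a club, so that the stationary set provided by the hypothesis contains one of them. Everything else is bookkeeping, and the whole argument is a routine adaptation of the analogous result for $\omega_1$-guessing models established in \cite{jk28}.
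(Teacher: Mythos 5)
Your overall strategy is the paper's: produce $f : \omega_1 \to \sup(N \cap \omega_2)$ cofinal in $\sup(N \cap \omega_2)$ with every proper initial segment in $N$, apply weak $\omega_1$-guessing to get $f \in N$, and contradict $\sup(N \cap \omega_2) \notin N$. The gap is in how you extract $f$ from $\textsf{AP}_{\omega_1}$. What you call the ``standard characterization of the approachability ideal'' is not what $\textsf{AP}_{\omega_1}$ provides: approachability gives a sequence $\langle a_\alpha : \alpha < \omega_2 \rangle$ of bounded subsets of $\omega_2$ and a club $C^*$ such that every $\delta \in C^*$ admits \emph{some} cofinal $A_\delta \subseteq \delta$ of order type $\cf(\delta)$ all of whose proper initial segments occur among $\{ a_\beta : \beta < \delta \}$; there is no coherence requirement whatsoever between the witnesses at different points. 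The sequence you posit, with $C_\gamma = C_\alpha \cap \gamma$ for $\gamma \in \mathrm{acc}(C_\alpha)$ and $C_\alpha$ cofinal in $\alpha$ on a club, is a partial-square-type strengthening that you would have to justify separately (and which is not provable from $\textsf{AP}_{\omega_1}$ by any standard argument), and your proof uses the coherence essentially --- it is exactly how you place cofinally many initial segments of $C_\delta$ into $N$. A secondary problem: the internally approachable models of length $\omega_1$ do \emph{not} in general form a club in $P_{\omega_2}(H(\omega_3))$; under \textsf{PFA}, and indeed under the present hypothesis (see Section 4), there are stationarily many suitable models which are not even internally unbounded, so your selection of an $N$ that is simultaneously weakly guessing and internally approachable is also unjustified.

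Both problems vanish if you use the actual approachability witness, which is what the paper's appeal to \cite[Proposition 2.6]{jk28} amounts to. Since $\omega_1 \subseteq N \prec H(\omega_3)$ and $|N| = \omega_1$, the set $N \cap \omega_2$ is an ordinal $\delta$, so every $\beta < \delta$ lies in $N$; no club of points of $N$ inside $\delta$, hence no internal approachability, is needed. Choosing $N$ from the given stationary set with $\vec a, C^* \in N$ and $\cf(\delta) = \omega_1$, we get $\delta \in C^*$ (it is a limit point of $C^*$), the witnessing set $A_\delta$ has order type $\omega_1$, and each proper initial segment of $A_\delta$ equals some $a_\beta$ with $\beta < \delta$, hence lies in $N$. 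Therefore every proper initial segment of the increasing enumeration $f : \omega_1 \to \delta$ of $A_\delta$ is in $N$, and your final step runs verbatim: weak $\omega_1$-guessing gives $f \in N$, and elementarity gives $\sup(\ran(f)) = \delta \in N$, a contradiction.
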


\begin{proof}
The argument of \cite[Proposition 2.6]{jk28} shows that there exists a 
function $f : \omega_1 \to N$ which is cofinal in $N \cap \omega_2$ all of 
whose initial segments are in $N$. 
Since $N$ is weakly $\omega_1$-guessing, $f \in N$. 
By elementarity, $\sup(\ran(f)) = N \cap \omega_2 \in N$, which is impossible.
\end{proof}

\begin{proposition}
Suppose that 
there are stationarily many $N$ in $P_{\omega_2}(H(\omega_2))$ 
which are weakly $\omega_1$-guessing. 
Then there does not exist a weak $\omega_1$-Kurepa tree. 
In particular, \textsf{CH} fails.
\end{proposition}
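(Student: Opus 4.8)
The plan is to suppose toward a contradiction that a weak $\omega_1$-Kurepa tree $T$ exists --- that is, a tree of height $\omega_1$, each of whose levels has size at most $\omega_1$, with at least $\omega_2$ many cofinal branches --- and to derive a contradiction from a single weakly $\omega_1$-guessing $N \prec H(\omega_2)$ with $T \in N$. First I would observe that since $T$ has height $\omega_1$ and all levels of size at most $\omega_1$, we have $|T| = \omega_1$, so $T \in H(\omega_2)$; replacing $T$ by an isomorphic copy, its underlying set may be taken to be $\omega_1$ itself, with tree order $<_T$. Let $B$ be the set of cofinal branches of $T$, so $|B| \ge \omega_2$. The family of $N \in P_{\omega_2}(H(\omega_2))$ with $N \prec H(\omega_2)$ and $T \in N$ contains a club, so intersecting with the given stationary set, I may fix such an $N$ which is moreover weakly $\omega_1$-guessing; in particular $|N| = \omega_1$ and $\omega_1 \subseteq N$, hence $N \cap \omega_1 = \omega_1$, and since the underlying set of $T$ is $\omega_1 \subseteq N$ and $<_T \in N$, the whole tree $T$ is a subset of $N$.

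Because $|N| = \omega_1 < \omega_2 \le |B|$, I can choose a cofinal branch $b \in B \setminus N$. Identify $b$ with the function on $\omega_1$ sending each $\alpha < \omega_1$ to the unique node of $b$ on level $\alpha$ of $T$; this is a function $b : \omega_1 \to On$. The key step is to check that $b \restriction \alpha \in N$ for every $\alpha < \omega_1$. Fix $\alpha < \omega_1$ and note that $b(\alpha) \in T \subseteq N$ while $\alpha \in \omega_1 \subseteq N$. Since $b \restriction \alpha$ is precisely the $<_T$-increasing enumeration of the set $\{ t \in T : t <_T b(\alpha) \}$ --- a set definable in $H(\omega_2)$ from the parameters $T$, $b(\alpha)$, and $\alpha$, all lying in $N$ --- elementarity of $N$ in $H(\omega_2)$ gives $b \restriction \alpha \in N$. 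As this holds for every $\alpha < \omega_1$ and $\omega_1 \subseteq N$, the reformulation of weak $\omega_1$-guessing recorded just after Definition 3.1 yields $b \in N$, contradicting $b \notin N$. Hence no weak $\omega_1$-Kurepa tree exists.

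For the final clause, I would recall that under \textsf{CH} the full binary tree ${}^{<\omega_1}2$ of height $\omega_1$ is a weak $\omega_1$-Kurepa tree: its level at an infinite ordinal $\alpha$ has size $2^{\aleph_0} = \aleph_1$, and it has $2^{\aleph_1} \ge \aleph_2$ cofinal branches. Therefore the nonexistence of weak $\omega_1$-Kurepa trees implies $\neg \textsf{CH}$.

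I expect the only point requiring care to be the verification that $b$ is genuinely captured by the weak guessing property of $N$: this needs both that $T$ together with its tree order, and all of its nodes, actually lie in $N$ --- which is why one first transfers $T$ onto $\omega_1$ and uses $\omega_1 \subseteq N$ --- and that each proper initial segment $b \restriction \alpha$ is definable from parameters in $N$, so that elementarity places it in $N$. This is exactly where weak $\omega_1$-guessing is invoked, and it works because $b$ is coded as an honest function on $\omega_1$ whose restrictions to ordinals $\alpha < \omega_1$ are members of $N$, rather than merely as an unbounded subset of $\omega_1$.
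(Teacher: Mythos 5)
Your proof is correct and is essentially the argument the paper intends: the paper itself only cites \cite[Proposition 2.8]{jk28} and remarks that the $\omega_1$-guessing proof adapts with slight modification, and your write-up is exactly that adaptation (put $T$ on underlying set $\omega_1$ so $T \subseteq N$, code a branch $b \notin N$ as a function on $\omega_1$ whose initial segments lie in $N$ by elementarity, apply weak $\omega_1$-guessing to get $b \in N$, and use the binary tree ${}^{<\omega_1}2$ under \textsf{CH} for the last clause). No gaps.
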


\begin{proof}
See \cite[Proposition 2.8]{jk28}.
\end{proof}

\begin{proposition}
Assume that $2^\omega \le \omega_2$ and there are cofinally many 
sets $N$ in $P_{\omega_2}(H(\omega_2))$ which are 
indestructibly weakly $\omega_1$-guessing. 
Then {Todor\v cevi\' c}'s maximality principle holds.
\end{proposition}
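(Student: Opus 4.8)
The plan is to prove {Todor\v cevi\' c}'s maximality principle by contradiction. Suppose $\q$ is a forcing poset witnessing its failure, so that $\q$ adds a new subset $X$ of $\omega_1$ all of whose proper initial segments lie in $V$, while $\q$ preserves both $\omega_1$ and $\omega_2$. I would identify $X$ with its characteristic function $\omega_1 \to 2$, fix a condition $q$ and a $\q$-name $\dot X$ with $q$ forcing all of this, and then, working in $V$, build the set $T$ of all functions $t \colon \alpha \to 2$ with $\alpha < \omega_1$ such that some $r \le q$ forces $\dot X \restriction \alpha = \check t$. Since $q$ forces every proper initial segment of $X$ to lie in $V$, for each $\alpha < \omega_1$ there are at most $2^\omega$ such functions $t$ with domain $\alpha$, so $|T| \le \omega_1 \cdot 2^\omega \le \omega_2$ by the hypothesis $2^\omega \le \omega_2$; fix an injection $h \colon T \to \omega_2$ in $V$. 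Now let $G$ be generic with $q \in G$ and $X = \dot X^G$: then $Y := \{ X \restriction \alpha : \alpha < \omega_1 \}$ is a subset of $T$ of size $\omega_1$, lying in $V[G]$ but not in $V$.

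The key step is a covering observation. In $V[G]$ the set $h[Y] \subseteq \omega_2$ has size $\omega_1$, and since $\q$ preserves $\omega_2$ the ordinal $\omega_2$ is still regular in $V[G]$, so $h[Y]$ is bounded, say by some $\gamma < \omega_2$. Then $B := \{ t \in T : h(t) < \gamma \}$ is a set in $V$ of size at most $\omega_1$, all of whose elements are countable functions; hence $B$ belongs to $P_{\omega_2}(H(\omega_2))$ as computed in $V$, and $Y \subseteq B$. So although $Y$ itself is not in $V$, it is covered by a ground-model set in $P_{\omega_2}(H(\omega_2))$.

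With this in hand I would invoke the hypothesis to fix, in $V$, an indestructibly weakly $\omega_1$-guessing model $N \in P_{\omega_2}(H(\omega_2))$ with $B \subseteq N$. Since $\q$ preserves $\omega_1$, the model $N$ is still weakly $\omega_1$-guessing in $V[G]$. For every $\alpha < \omega_1$ we have $X \restriction \alpha \in Y \subseteq B \subseteq N$, so $X$ is a function from $\omega_1$ to the ordinals all of whose proper initial segments belong to $N$, and so weak $\omega_1$-guessing of $N$ in $V[G]$, in the form recorded after Definition 3.1, yields $X \in N$. But $N$ is a set of elements of $H(\omega_2)^V \subseteq V$, so $X \in V$, contradicting the fact that $X$ is new. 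This contradiction completes the argument.

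The step I expect to be the crux, and the one that is easy to overlook, is the covering observation of the second paragraph: a priori one might worry that $\q$ could scatter the initial segments of $X$ through $T$ so that no single ground-model set of size $\omega_1$ contains them all, which would prevent us from placing the relevant data inside a guessing model. The point is that the regularity of $\omega_2$ in $V[G]$ supplies the needed pigeonhole, and this is exactly where both the preservation of $\omega_2$ and the hypothesis $2^\omega \le \omega_2$ (used to guarantee $|T| \le \omega_2$, so that $T$ embeds into $\omega_2$) are used. One small technical remark: the hypothesis only provides guessing models in $P_{\omega_2}(H(\omega_2))$ rather than in $P_{\omega_2}(H(\theta))$ for large $\theta$, but this is harmless here, since the covering set $B$, the relevant initial segments of $X$, and $X$ itself all already lie in $H(\omega_2)$.
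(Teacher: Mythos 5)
Your proof is correct, and it is essentially the argument the paper has in mind: the paper simply cites \cite[Theorem 3.9]{jk28}, and your write-up is the natural adaptation of that proof to weak guessing, with the key step being exactly the one you isolate — using $2^\omega \le \omega_2$ to embed the tree of ground-model initial segments into $\omega_2$, and the preservation of $\omega_2$ to bound $\{ \dot X^G \restriction \alpha : \alpha < \omega_1 \}$ inside a ground-model set of size $\omega_1$ that can be placed in an indestructibly weakly $\omega_1$-guessing model. The remaining steps (indestructibility under the $\omega_1$-preserving $\q$, and $X \in N \subseteq H(\omega_2)^V$ contradicting freshness) are handled correctly.
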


\begin{proof}
See \cite[Theorem 3.9]{jk28}.
\end{proof}

Weiss \cite[Section 2]{weiss} proved that the principle $\textsf{ITP}(\omega_2)$ 
implies the non-existence of $\omega_2$-Aronszajn trees.

\begin{proposition}
Suppose that there exist stationarily many weakly 
$\omega_2$-guessing models in $P_{\omega_2}(H(\omega_3))$. 
Then there does not exist an $\omega_2$-Aronszajn tree.
\end{proposition}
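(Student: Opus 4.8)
The plan is to suppose toward a contradiction that $T$ is an $\omega_2$-Aronszajn tree, and to exploit a weakly $\omega_2$-guessing model $N$ to produce a cofinal branch through $T$, contradicting Aronszajnness. Without loss of generality $T$ is a tree on $\omega_2$ whose $\alpha$-th level is a subset of $\{\alpha\}\times\omega_2$ (or some similar coding so that $T$ and its levels are definable from parameters in $H(\omega_3)$), and we may arrange $T \in N$ for a stationary set of weakly $\omega_2$-guessing $N \in P_{\omega_2}(H(\omega_3))$; fix one such $N$ which is moreover an elementary substructure of $H(\omega_3)$ with $\omega_1 \subseteq N$. Since $N$ is weakly $\omega_2$-guessing, by Lemma 3.2 (applied with $\kappa = \omega_2$) we have $\cf(\sup(N \cap \omega_2)) = \omega_1$; write $\delta := \sup(N \cap \omega_2)$, an ordinal of cofinality $\omega_1$ and of size $\omega_1$.

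The first real step is to build a ``generic'' branch of $T$ of length $\delta$ coded as a function $f : \delta \to \omega_2$, where $f(\alpha)$ picks out a node on level $\alpha$, such that the nodes $\{f(\alpha) : \alpha < \delta\}$ are pairwise $T$-comparable and every proper initial segment $f \restriction \beta$ lies in $N$. To do this I would enumerate $N \cap \omega_2$ in increasing order as $\langle \alpha_\xi : \xi < \omega_1\rangle$ with supremum $\delta$, and recursively choose nodes: at stage $\xi$, having a comparable set of nodes below level $\alpha_\xi$ all lying in $N$, pick (using elementarity of $N$, since $T$ is not $\omega_2$-Aronszajn ``below'' — here one uses that $T$ has no cofinal branch but does have nodes on unboundedly many levels of $N$, so one can always extend) a node on level $\alpha_\xi$ above everything chosen so far, with the choice made inside $N$. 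The key point is that each initial segment of the resulting $f$ is a bounded-length sequence of elements of $N$ chosen ``within $N$'', hence belongs to $N$; a bookkeeping/closure argument (using that $N$ is closed enough and $\omega_1 \subseteq N$) shows $f \restriction \beta \in N$ for all $\beta < \delta$. Then, transfer $f$ through the transitive collapse: letting $\pi : N \to N_0$ be the collapse, by Lemma 3.5 the pair $(N_0, V)$ has the weak $\pi(\omega_2)$-approximation property, and $\pi \circ f$ (suitably restricted and re-indexed so its domain is $\pi(\omega_2) = \ot(N\cap\omega_2)$, using that $\delta = \sup(N\cap\omega_2)$) has all proper initial segments in $N_0$; hence it belongs to $N_0$, so $f$ — or rather the branch it codes — is an element of $N$.

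Once the branch $b := \{f(\alpha) : \alpha < \delta\}$ (or its image under $\pi^{-1}$) is in $N$, elementarity finishes the contradiction: $N$ thinks there is a cofinal branch of $T$ of length $\delta$, but since $b \in N$ and $b$ is cofinal in $N \cap \omega_2$ while $N \models$ ``$T$ has height $\omega_2$'', either $b$ extends to a genuine cofinal branch of $T$ (contradicting Aronszajnness directly) or one argues that $\sup$ of the levels hit by $b$ equals $\delta$ and, by elementarity applied to ``$T$ has a node extending $b$ on level $\gamma$'' for $\gamma \in N \cap \omega_2$ beyond the support of any putative bound, one builds an honest branch through all of $T$ inside $V$. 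Either way $T$ has a cofinal branch, so $T$ is not Aronszajn — contradiction.

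The main obstacle I anticipate is the recursive construction of $f$ with \emph{all} initial segments in $N$: one must choose the nodes coherently and inside $N$ at every stage of an $\omega_1$-length recursion, which requires either that $N$ already contains a suitable ``scale'' or cofinal map witnessing $\cf(\delta) = \omega_1$ in a definable way, or a more careful Skolem-hull bookkeeping ensuring the partial branch never leaves $N$. This is essentially the same delicate point that appears in the analogous arguments of \cite{jk28} and in Weiss's original proof that $\textsf{ITP}(\omega_2)$ kills $\omega_2$-Aronszajn trees; the weakly-guessing hypothesis is tailored precisely so that only initial-segment-guessing (not full countable-approximation guessing) is needed, which is what makes the construction go through. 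A secondary, more routine, obstacle is the collapse/re-indexing bookkeeping needed to match the domain of $\pi\circ f$ with $\pi(\omega_2)$ so that Lemma 3.5 applies cleanly.
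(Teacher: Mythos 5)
There is a genuine gap, and it sits exactly where you flag it: the recursive construction of the length-$\delta$ branch $f$ with all proper initial segments in $N$. Your plan is to run an $\omega_1$-length recursion along an enumeration of $N\cap\omega_2$, "picking a node on level $\alpha_\xi$ above everything chosen so far." But at limit stages of this recursion nothing guarantees that the chain chosen so far has \emph{any} upper bound in $T$ on (or above) level $\alpha_\xi$ --- the impossibility of extending branches through limit levels is precisely what makes $\omega_2$-Aronszajn trees exist, so "one can always extend" is unjustified, and no amount of Skolem-hull bookkeeping inside $N$ repairs it (keeping the choices in $N$ is a separate issue from the chain having an extension at all). The paper avoids the construction entirely: since $T$ has height $\omega_2$ and $\alpha := N\cap\omega_2$ is an ordinal below $\omega_2$ (it is an ordinal because $\omega_1\subseteq N\prec H(\omega_3)$), one simply fixes in $V$ a node $y$ on level $\alpha$ of $T$ and lets $f:\alpha\to\omega_2$ enumerate the $<_T$-predecessors of $y$ by level. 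Every node of height $<\alpha$ lies in $N$ (its level is in $N$ and has size $\le\omega_1\subseteq N$), and $f\restriction\gamma$ is definable from the single node $f(\gamma)\in N$ and $T\in N$, so all proper initial segments of $f$ are in $N$ for free. That is the missing idea.

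A secondary confusion: weak $\omega_2$-guessing (equivalently, the weak approximation property of the collapse via Lemma 3.5) does \emph{not} give you that $f$, or the branch of length $\delta$ it codes, is an element of $N$ --- that is impossible, since it would place $\sup(N\cap\omega_2)$ in $N$. What it gives is a function $g\in N$ with domain $\omega_2$ such that $g\restriction\sup(N\cap\omega_2)=f$, and the endgame is then a direct elementarity argument: since $g(\gamma)<_T g(\beta)$ with $g(\gamma)$ on level $\gamma$ for all $\gamma<\beta$ in $N\cap\omega_2$, elementarity of $N$ yields the same for all $\gamma<\beta<\omega_2$, so $\ran(g)$ is a cofinal branch of $T$, and $T$ is not Aronszajn. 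Your "either $b$ extends to a genuine cofinal branch or one argues\,\dots" alternative is not needed and, as written, rests on the incorrect claim that the length-$\delta$ branch lies in $N$. Also note the collapse detour is unnecessary: one can apply the definition of weak $\omega_2$-guessing directly to $f$.
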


\begin{proof}
Let $T$ be a tree of height $\omega_2$, all of whose levels have 
cardinality less than $\omega_2$. 
We will prove that there is a branch of $T$ with order type $\omega_2$. 
Without loss of generality, assume that $T$ has underlying set $\omega_2$. 
Then $T \in H(\omega_3)$. 
Fix $N$ in $P_{\omega_2}(H(\omega_3))$ such that 
$N \prec H(\omega_3)$, $T \in N$, 
and $N$ is weakly $\omega_2$-guessing. 
Let $\alpha := N \cap \omega_2$. 
By the elementarity of $N$, any node in $T$ of height less than $\alpha$ 
is in $N$.

Fix a node $y$ on level $\alpha$ of $T$. 
Define a function $f : \alpha \to \omega_2$ by letting 
$f(\beta)$ be the member of the set $\{ x \in T : x <_T y \}$ 
which is on level $\beta$ of $T$. 
Then by elementarity, 
for all $\gamma < \alpha$, $f \restriction \gamma$ is in $N$, since 
this function just enumerates in $<_T$-increasing order the elements of 
$T$ which are $<_T$-below $f(\gamma)$. 
Since $N$ is weakly $\omega_2$-guessing, there is a function 
$g : \omega_2 \to On$ in $N$ such that $g \restriction \alpha = f$.

We claim that the range of $g$ is a cofinal branch of $T$ of order 
type $\omega_2$, which 
completes the proof. 
For all $\gamma < \beta$ in $N \cap \omega_2 = \alpha$, 
$g(\gamma) <_T g(\beta)$, where $g(\gamma)$ is on level $\gamma$ 
of $T$ and $g(\beta)$ is on level $\beta$ of $T$. 
By elementarity, the same statement holds for all $\gamma < \beta$ 
in $\omega_2$. 
It follows that the range of $g$ is a chain of $T$ which meets each 
level of $T$, and thus is a cofinal branch of $T$ 
of order type $\omega_2$.
\end{proof}

The square principle described in the next proposition was originally 
shown to fail under $\textsf{ITP}(\omega_2)$ by Weiss \cite[Section 4]{weiss}.

\begin{proposition}
Let $\kappa \ge \omega_2$ be regular, $\theta > \kappa$, and assume that 
there are stationarily 
many $N \in P_{\omega_2}(H(\theta))$ with bounded uniform cofinality 
$\omega_1$ which are weakly $\kappa$-guessing. 
Then there does not exist a sequence 
$$
\langle c_\alpha : \alpha \in \kappa \cap \cof(\omega_1) \rangle
$$
satisfying:
\begin{enumerate}
\item $c_\alpha$ is a club subset of $\alpha$;
\item for any ordinal $\gamma < \kappa$, the set 
$$
E_\gamma := \{ c_\alpha \cap \gamma : 
\alpha \in \kappa \cap \cof(\omega_1), \ 
\gamma \in \lim(c_\alpha) \}
$$
has size less than $\omega_2$;
\item there is no club set $D \subseteq \kappa$ such that 
for every $\gamma \in \lim(D) \cap \kappa$, 
$D \cap \gamma \in E_\gamma$.
\end{enumerate}
\end{proposition}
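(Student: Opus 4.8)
The plan is to argue by contradiction, adapting the standard argument that guessing models refute square principles. Suppose $\vec c = \langle c_\alpha : \alpha \in \kappa \cap \cof(\omega_1) \rangle$ satisfies (1)--(3). Since $\vec c \in H(\theta)$, intersecting the stationary set from the hypothesis with the club of elementary substructures of $H(\theta)$ containing $\vec c$ and $\kappa$, I fix $N \prec H(\theta)$ with $|N| = \omega_1$, $\omega_1 \subseteq N$, $\vec c, \kappa \in N$, such that $N$ has bounded uniform cofinality $\omega_1$ and is weakly $\kappa$-guessing. Put $\delta := \sup(N \cap \kappa)$; then $\delta < \kappa$ (as $|N \cap \kappa| < \kappa$) and $N \cap \kappa = N \cap \delta$, while $\delta \notin N$ since $N \cap \kappa$ has no greatest element. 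By Lemma 3.2, $\cf(\delta) = \omega_1$, so $c_\delta$ is defined and is club in $\delta$; and by Lemma 1.3 applied to $\kappa \in N$, the set $N \cap \kappa$ is countably closed.

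First I would show that $\lim(c_\delta) \cap N$ is cofinal in $\delta$. Given $\mu_0 \in N \cap \delta$, build $\mu_0 < e_0 < \mu_1 < e_1 < \cdots$ with $\mu_n \in N \cap \delta$ and $e_n \in c_\delta$, using that both $N \cap \delta$ and $c_\delta$ are cofinal in $\delta$; since $\cf(\delta) = \omega_1 > \omega$ the supremum $\gamma$ of this sequence lies below $\delta$, it is a limit point of $c_\delta$, and by countable closure of $N \cap \kappa$ it belongs to $N$. Now for each $\gamma \in \lim(c_\delta) \cap N$, condition (2) applied with the index $\delta$ (valid since $\delta \in \kappa \cap \cof(\omega_1)$ and $\gamma \in \lim(c_\delta)$) gives $c_\delta \cap \gamma \in E_\gamma$; since $E_\gamma \in N$ and $N \models |E_\gamma| < \omega_2$, we get $E_\gamma \subseteq N$, hence $c_\delta \cap \gamma \in N$. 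Letting $f : \delta \to 2$ be the characteristic function of $c_\delta$, it follows that $f \restriction \gamma \in N$ (being definable from $\gamma$ and $c_\delta \cap \gamma$) for cofinally many $\gamma < \delta = \sup(N \cap \kappa)$. Since $N$ is weakly $\kappa$-guessing, there is $g : \kappa \to On$ in $N$ with $g \restriction \delta = f$; as $g$ takes values in $\{0,1\}$ on $N \cap \kappa$, elementarity gives that $g$ is $\{0,1\}$-valued on all of $\kappa$, so $g$ is the characteristic function of some $D \in N$ with $D \cap \delta = c_\delta$.

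Finally I would check that $D$ threads $\vec c$, contradicting (3). Since $D \cap \delta = c_\delta$ is club in $\delta$, an elementarity argument that verifies statements about $D$ only at ordinals of $N \cap \kappa$ (all of which lie below $\delta$) shows $N \models$ ``$D$ is a club subset of $\kappa$'': unboundedness, because an $N$-witness $\beta < \kappa$ to boundedness of $D$ would lie below $\delta$ while $c_\delta \subseteq D$ is cofinal in $\delta$; closedness, because any $\gamma \in N \cap \kappa$ which $N$ sees as a limit point of $D$ is a genuine limit point of $c_\delta$, hence lies in $c_\delta \subseteq D$. Likewise, for each $\gamma \in N \cap \kappa$ with $N \models \gamma \in \lim(D)$ we have $\gamma < \delta$, $\gamma \in \lim(c_\delta)$, and $D \cap \gamma = c_\delta \cap \gamma \in E_\gamma$; since $D \cap \gamma$ and $E_\gamma$ both lie in $N$, this membership reflects into $N$. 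As this covers all ordinals of $N$ below $\kappa$, $N$ models ``$D$ is a club subset of $\kappa$ such that $D \cap \gamma \in E_\gamma$ for every $\gamma \in \lim(D) \cap \kappa$'', so by elementarity $H(\theta)$ does too, contradicting (3). The step I expect to be the main obstacle is the first one: extracting, from weak $\kappa$-guessing together with bounded uniform cofinality, enough limit points of $c_\delta$ inside $N$ to make the guessing hypothesis applicable, while being careful to obtain the threading property at the index $\delta$ only indirectly — by reflecting the statement down into $N$ rather than verifying it there — which also sidesteps any question of whether $\delta$ itself counts as a limit point of $c_\delta$ under the convention used in (2).
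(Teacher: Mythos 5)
Your proof is correct and follows essentially the same route as the paper's: fix a weakly $\kappa$-guessing $N$ with bounded uniform cofinality $\omega_1$ containing $\kappa$ and $\vec c$, use countable closure of $N \cap \kappa$ (the paper's Lemma 1.2) to get cofinally many limit points of $c_\delta$ in $N$, note $E_\gamma \subseteq N$, guess the characteristic function of $c_\delta$ to obtain $D \in N$ with $D \cap \delta = c_\delta$, and reflect clubness and the threading property via elementarity to contradict (3). The only differences are that you spell out details the paper leaves as "easy arguments."
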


\begin{proof}
Suppose for a contradiction that 
$$
\vec c = \langle c_\alpha : \alpha \in \kappa \cap \cof(\omega_1) \rangle
$$
is such a sequence. 
Fix $N \in P_{\omega_2}(H(\theta))$ such that 
$N \prec (H(\theta),\in,\kappa,\vec c)$ and $N$ is weakly $\kappa$-guessing. 
Note that for all $\gamma \in N \cap \kappa$, 
$E_\gamma \in N$, and so $E_\gamma \subseteq N$ since 
$|E_\gamma| < \omega_2$ and $\omega_1 \subseteq N$.

Let $\alpha := \sup(N \cap \kappa)$. 
Since $\kappa$ is regular, $\alpha < \kappa$. 
By Lemma 3.2, $\cf(\alpha) = \omega_1$. 
Moreover, by Lemma 1.2, $N \cap \kappa = N \cap \alpha$ 
is countably closed. 
Since $c_\alpha$ is a club subset of $\alpha$, it follows 
that $\lim(c_\alpha) \cap N \cap \alpha$ is cofinal in $\alpha$.

Let $f : \alpha \to 2$ be the characteristic function of $c_\alpha$. 
We claim that there are cofinally many $\gamma < \alpha$ 
such that $f \restriction \gamma$ is in $N$. 
The set $\lim(c_\alpha) \cap N \cap \alpha$ is cofinal in $\alpha$. 
Consider $\gamma$ in this set. 
Then $c_\alpha \cap \gamma \in E_\gamma$. 
Since $E_\gamma \subseteq N$ as observed above, 
$c_\alpha \cap \gamma \in N$. 
But $f \restriction \gamma$ is just the characteristic function 
of $c_\alpha \cap \gamma$, and hence is in $N$.

Since $N$ is weakly $\kappa$-guessing, there is 
a function $g : \kappa \to On$ in $N$ such that 
$g \restriction \alpha = f$. 
Let $D := \{ \gamma < \kappa : g(\gamma) = 1 \}$. 
Then $D \cap \alpha = c_\alpha$. 
An easy argument using elementarity and the fact that 
$c_\alpha$ is a club in $\alpha$ shows that 
$D$ is a club subset of $\kappa$.

If $\gamma$ is a limit point of $D$ in $N \cap \kappa = N \cap \alpha$, 
then $\gamma$ is a limit point of $c_\alpha$, and hence 
$c_\alpha \cap \gamma = D \cap \gamma$ is in 
$E_\gamma$. 
By elementarity, it follows that if $\gamma$ is a limit point of $D$ in 
$\kappa$, then $D \cap \gamma \in E_\gamma$. 
This contradicts property 3 of the sequence $\vec c$.
\end{proof}

\begin{corollary}
The principle $\textsf{wGMP}$ implies $\neg \Box_\kappa$ and 
$\neg \Box(\lambda)$ for all cardinals $\kappa \ge \omega_1$ and 
regular cardinals $\lambda \ge \omega_2$.
\end{corollary}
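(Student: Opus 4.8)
The plan is to derive both $\neg\Box_\kappa$ and $\neg\Box(\lambda)$ as instances of Proposition 3.17 (the one just proved), after first arranging a model witness with the requisite properties. The strategy is entirely driven by the observation that a $\Box_\kappa$-sequence and a $\Box(\lambda)$-sequence each give rise to a sequence $\langle c_\alpha : \alpha \in \mu \cap \cof(\omega_1)\rangle$ of the form forbidden by Proposition 3.17, for a suitable regular $\mu \ge \omega_2$.

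\medskip

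\noindent\textbf{Step 1: Reduce $\neg\Box(\lambda)$ to Proposition 3.17.} Fix a regular $\lambda \ge \omega_2$ and suppose $\langle c_\alpha : \alpha \in \lim(\lambda)\rangle$ is a $\Box(\lambda)$-sequence, so each $c_\alpha$ is club in $\alpha$ with the coherence condition ($c_\alpha \cap \gamma = c_\gamma$ whenever $\gamma \in \lim(c_\alpha)$) and no thread exists. Restrict to the indices $\alpha \in \lambda \cap \cof(\omega_1)$; I claim this restricted sequence satisfies (1)--(3) of Proposition 3.17 with $\kappa := \lambda$. Clause (1) is immediate. For clause (2), coherence forces $E_\gamma = \{c_\gamma\}$ for every $\gamma < \lambda$ (if $\gamma \in \lim(c_\alpha)$ then $c_\alpha \cap \gamma = c_\gamma$), so $|E_\gamma| = 1 < \omega_2$; one must be slightly careful when $\cf(\gamma) \ne \omega_1$ so that $\gamma$ is not among the indices, but the set $\{c_\alpha \cap \gamma : \alpha \in \lambda\cap\cof(\omega_1),\ \gamma \in \lim(c_\alpha)\}$ is still a singleton $\{c_\gamma\}$ (or empty) by coherence. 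For clause (3), a club $D$ with $D\cap\gamma \in E_\gamma$ for all $\gamma \in \lim(D)$ would satisfy $D \cap \gamma = c_\gamma$ for all such $\gamma$, i.e.\ $D$ would be a thread through the $\Box(\lambda)$-sequence, contradicting the assumption. Now pick $\theta$ large, and using $\textsf{wGMP}(\theta)$ together with the standard fact that there are club-many $N \in P_{\omega_2}(H(\theta))$ with bounded uniform cofinality $\omega_1$ (the intersection of a club with a stationary set is stationary), fix $N \prec (H(\theta),\in,\lambda,\vec c)$ which is weakly guessing, hence weakly $\lambda$-guessing, with bounded uniform cofinality $\omega_1$. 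Proposition 3.17 applied to $\kappa = \lambda$ yields a contradiction.

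\medskip

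\noindent\textbf{Step 2: Reduce $\neg\Box_\kappa$ to the $\Box(\kappa^+)$ case (or handle it directly).} There are two options. The cleaner one: it is classical that $\Box_\kappa$ implies $\Box(\kappa^+)$, so $\neg\Box(\kappa^+)$ from Step 1 (with $\lambda := \kappa^+$, which is regular and $\ge \omega_2$) immediately gives $\neg\Box_\kappa$ for every $\kappa \ge \omega_1$. Alternatively, one can argue directly: given a $\Box_\kappa$-sequence $\langle c_\alpha : \alpha \in \lim(\kappa^+)\rangle$ with $\ot(c_\alpha) \le \kappa$ and the coherence/non-threadedness built in, restrict to $\alpha \in \kappa^+ \cap \cof(\omega_1)$; clause (2) of Proposition 3.17 holds because for fixed $\gamma < \kappa^+$ the set $E_\gamma$ consists of sets $c_\alpha \cap \gamma$ which are all initial segments of a coherent system, and there are at most $|\gamma|^+ \le \kappa$ many --- wait, that is too many, so the direct route genuinely needs the $\Box_\kappa$ bound $\ot(c_\alpha) \le \kappa$: the point is that $c_\alpha \cap \gamma = c_{\gamma'}$ for the appropriate $\gamma' \le \gamma$, again a singleton by coherence. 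I will present the first option since it is shortest and cites only a standard implication.

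\medskip

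\noindent\textbf{Main obstacle.} The only real subtlety is verifying that Proposition 3.17's hypothesis ``stationarily many $N$ with bounded uniform cofinality $\omega_1$ which are weakly $\kappa$-guessing'' is genuinely supplied by $\textsf{wGMP}$, since $\textsf{wGMP}$ only asserts stationarily many weakly guessing $N$ without the cofinality condition. The fix is Lemma 1.2-adjacent reasoning: the set of $N \in P_{\omega_2}(H(\theta))$ with bounded uniform cofinality $\omega_1$ contains a club (a standard chain-construction argument, building $N$ as an increasing union of length $\omega_1$ of elementary submodels of size $\le\omega_1$ closed under the relevant Skolem functions, so that $\cf(\sup(N\cap\xi)) = \omega_1$ for every regular $\xi \in N$), and the intersection of this club with the $\textsf{wGMP}$-stationary set of weakly guessing models is still stationary; every model in that intersection is weakly $\kappa$-guessing for the relevant $\kappa$ (which lies in $N$) and has bounded uniform cofinality $\omega_1$. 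Once that is in hand, the rest is bookkeeping: the coherence of $\Box$-sequences collapses each $E_\gamma$ to a singleton, a thread is exactly a witness violating clause (3), and Proposition 3.17 closes the argument.
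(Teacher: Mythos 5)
Your reductions are fine: restricting a $\Box(\lambda)$-sequence (or, via the classical implication $\Box_\kappa \Rightarrow \Box(\kappa^+)$, a $\Box_\kappa$-sequence) to indices of cofinality $\omega_1$ yields a sequence satisfying (1)--(3) of Proposition 3.17, coherence making each $E_\gamma$ at most a singleton and any witness to the failure of (3) being exactly a thread. The genuine problem is your treatment of the ``main obstacle''. The set of $N \in P_{\omega_2}(H(\theta))$ with bounded uniform cofinality $\omega_1$ does \emph{not} contain a club: it is not closed under unions of increasing $\omega$-chains, and its complement is in fact stationary --- for any $F : H(\theta)^{<\omega} \to H(\theta)$, the union of an increasing $\omega$-chain of $F$-closed elementary submodels of size $\omega_1$ with each model an element of the next is $F$-closed and satisfies $\cf(\sup(N \cap \lambda)) = \omega$ for every regular $\lambda \ge \omega_2$ in $N \cup \{\theta\}$; this is precisely the construction used in Corollary 4.2. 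Your $\omega_1$-length chain construction only shows that the set is cofinal (indeed stationary), and the intersection of two stationary sets need not be stationary, so the step ``intersect this club with the $\textsf{wGMP}$-stationary set'' fails as written, and with it your verification of the hypothesis of Proposition 3.17.

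The repair is already available in the paper and makes the obstacle vanish: intersect the $\textsf{wGMP}$-stationary set with the genuine club of $N$ with $N \prec (H(\theta),\in,\kappa,\vec c)$ and $\omega_1 \subseteq N$. Any weakly guessing $N$ in this intersection is weakly $\lambda$-guessing for every regular uncountable $\lambda \in N$, so Lemma 3.2 gives $\cf(\sup(N \cap \lambda)) = \omega_1$ for all such $\lambda$, which by the remark following Definition 1.1 is exactly bounded uniform cofinality $\omega_1$. In other words, the cofinality condition is automatic for weakly guessing elementary submodels rather than something to be arranged by a club; with this substitution the rest of your argument goes through and gives the corollary.
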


\section{Guessing models and covering}

In this section we will prove 
that $\textsf{GMP}(\omega_2)$ implies the existence of stationarily many 
$\omega_1$-guessing models in $P_{\omega_2}(H(\omega_2))$ 
which are not internally unbounded.

\begin{lemma}
Suppose that $\langle M_n : n < \omega \rangle$ is a $\subseteq$-increasing 
sequence of sets which are weakly $\omega_1$-guessing. 
Then $M := \bigcup \{ M_n : n < \omega \}$ is weakly 
$\omega_1$-guessing.
\end{lemma}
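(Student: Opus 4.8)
The plan is to unwind Definition 3.1 in the case $\kappa = \omega_1$. Since $\omega_1 \subseteq M_n$ for every $n$, each $M_n$ is a set of size $\omega_1$ with $\omega_1 \subseteq M_n$ and $\sup(M_n \cap \omega_1) = \omega_1$; likewise $M = \bigcup_n M_n$ has size $\omega_1$ (a countable union of sets of size $\omega_1$), contains $\omega_1$, and satisfies $\sup(M \cap \omega_1) = \omega_1$, so Definition 3.1 applies to $M$. It therefore suffices to fix a function $f : \omega_1 \to On$ such that $f \restriction \alpha \in M$ for cofinally many $\alpha < \omega_1$ and to prove that $f \in M$ (note that in this case any witness $g$ of domain $\omega_1$ satisfies $g \restriction \omega_1 = g = f$, so the guessing conclusion just says $f \in M$).

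The only real step is a pigeonhole argument. Let $A := \{ \alpha < \omega_1 : f \restriction \alpha \in M \}$, a cofinal subset of $\omega_1$, and for each $n < \omega$ put $A_n := \{ \alpha \in A : f \restriction \alpha \in M_n \}$. Since $M = \bigcup_n M_n$ we have $A = \bigcup_{n < \omega} A_n$, and because $\langle M_n : n < \omega \rangle$ is $\subseteq$-increasing the sequence $\langle A_n : n < \omega \rangle$ is $\subseteq$-increasing as well. As $\cf(\omega_1) = \omega_1 > \omega$, a cofinal subset of $\omega_1$ cannot be written as a countable union of bounded sets, so some $A_n$ is cofinal in $\omega_1$. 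Fixing such an $n$, we have $f \restriction \alpha \in M_n$ for cofinally many $\alpha < \omega_1$, so by Definition 3.1 applied to the weakly $\omega_1$-guessing model $M_n$ there is $g \in M_n$ with domain $\omega_1$ such that $g \restriction \omega_1 = f$; that is, $f = g \in M_n \subseteq M$, as desired.

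I do not anticipate any obstacle here: the argument is entirely soft, the heart of it being the regularity of $\omega_1$. The only points requiring (trivial) care are checking the prerequisites of Definition 3.1 for $M$, namely that $M$ has size $\omega_1$ and $\omega_1 \subseteq M$, and keeping straight that in the $\kappa = \omega_1$ case the conclusion ``there is $g \in N$ with $g \restriction \sup(N \cap \kappa) = f$'' reduces to $f \in N$.
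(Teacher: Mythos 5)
Your proof is correct and follows essentially the same route as the paper: a pigeonhole argument using the regularity of $\omega_1$ to find a single $M_n$ containing cofinally many initial segments of $f$, then applying weak $\omega_1$-guessing of $M_n$ and $M_n \subseteq M$. The only cosmetic difference is that the paper first upgrades ``cofinally many'' to ``all $\alpha < \omega_1$'' and pigeonholes the indices $n_\alpha$, whereas you pigeonhole the cofinal set $A$ directly, which works just as well.
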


\begin{proof}
Let $f : \omega_1 \to On$ be a function such that for 
cofinally many $\alpha < \omega_1$, $f \restriction \alpha \in M$. 
Note that since $\omega_1 \subseteq M$, this implies that for all 
$\alpha < \omega_1$, $f \restriction \alpha \in M$. 
We will prove that $f \in M$.

As $M = \bigcup \{ M_n : n < \omega \}$, 
for each $\alpha < \omega_1$ we can fix 
$n_\alpha < \omega$ such that $f \restriction \alpha \in M_{n_\alpha}$. 
Fix an uncountable set $X \subseteq \omega_1$ and $n < \omega$ such that 
for all $\alpha \in X$, $n_\alpha = n$. 

Since $X$ is uncountable, it is unbounded in $\omega_1$. 
Therefore, $\bigcup \{ f \restriction \alpha : \alpha \in X \} = f$. 
For each $\alpha \in X$, $n_\alpha = n$ implies that $f \restriction \alpha \in M_n$. 
So there are cofinally many $\alpha < \omega_1$ such that 
$f \restriction \alpha \in M_n$. 
Since $M_n$ is weakly $\omega_1$-guessing, $f \in M_n$. 
As $M_n \subseteq M$, $f \in M$, and we are done.
\end{proof}

\begin{corollary}
Let $\theta \ge \omega_2$ be a regular cardinal. 
Assume that there are stationarily many 
$M \in P_{\omega_2}(H(\theta))$ such 
that $M$ is weakly $\omega_1$-guessing. 
Then there are stationarily many $M \in P_{\omega_2}(H(\theta))$ such that 
$M$ is weakly $\omega_1$-guessing and for all regular cardinals 
$\lambda \ge \omega_2$ in $M \cup \{ \theta \}$, 
$\sup(M \cap \lambda) = \omega$.
\end{corollary}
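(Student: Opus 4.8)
The plan is to produce the required $M$ as a countable $\subseteq$-increasing union $M=\bigcup_{n<\omega}M_n$ of weakly $\omega_1$-guessing models. Then Lemma 4.1 immediately gives that $M$ is weakly $\omega_1$-guessing, and if the chain is chosen so that $\langle\sup(M_n\cap\lambda):n<\omega\rangle$ is strictly increasing for each relevant regular $\lambda$, then $\cf(\sup(M\cap\lambda))=\omega$, exactly as wanted. To have room to iterate — in particular to put each $M_n$ into the next model as an element — I would realize the $M_n$ as $M_n=N_n\cap H(\theta)$ for an $\in$-increasing chain $\langle N_n:n<\omega\rangle$ of size-$\omega_1$ elementary submodels of $H(\chi)$, where $\chi$ is a sufficiently large regular cardinal with $H(\theta)\in H(\chi)$ (recall that a size-$\omega_1$ elementary submodel of $H(\chi)$ is itself an element of $H(\chi)$ when $\chi$ is regular).

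Here are the steps. Fix an arbitrary club $\mathcal{C}\subseteq P_{\omega_2}(H(\theta))$; we must find $M\in\mathcal{C}$ with the listed properties. Let $S$ be the stationary set of weakly $\omega_1$-guessing models in $P_{\omega_2}(H(\theta))$ provided by the hypothesis. I would recursively choose $N_n\prec H(\chi)$ of size $\omega_1$ with $\omega_1\cup\{\theta,H(\theta)\}\subseteq N_n$, with $N_n\cap H(\theta)\in\mathcal{C}\cap S$, and with $N_n\in N_{n+1}$. Each step is possible because the set of $N\in P_{\omega_2}(H(\chi))$ that are elementary submodels containing the relevant finitely many parameters (including $N_n$ at a successor step), contain $\omega_1$, and satisfy $N\cap H(\theta)\in\mathcal{C}$, contains a club — this uses the standard facts that a club of $P_{\omega_2}(H(\theta))$ lifts to a set containing a club of $P_{\omega_2}(H(\chi))$ and that $\{N:x\in N\}$ is a club for any fixed $x\in H(\chi)$ — while the set $\{N\in P_{\omega_2}(H(\chi)):N\cap H(\theta)\in S\}$ is stationary, being the lift of the stationary set $S$, and a stationary set meets every club. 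Put $M_n:=N_n\cap H(\theta)$ and $M:=\bigcup_n M_n=(\bigcup_n N_n)\cap H(\theta)$. From $N_n\in N_{n+1}$ and $\omega_1\subseteq N_{n+1}$ one gets $N_n\subseteq N_{n+1}$ (pull back a surjection $\omega_1\to N_n$ living in $N_{n+1}$), so the $M_n$ form a $\subseteq$-increasing chain of weakly $\omega_1$-guessing models, each lying in $\mathcal{C}$.

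Finally I would verify the three conclusions. We have $M\in\mathcal{C}$ because $\mathcal{C}$ is closed under unions of $\subseteq$-chains, and $M$ is weakly $\omega_1$-guessing by Lemma 4.1. For the cofinality clause, let $\lambda\ge\omega_2$ be a regular cardinal in $M\cup\{\theta\}$. Then $\lambda\in N_n$ for all large enough $n$: if $\lambda=\theta$ this is immediate, and if $\lambda\in M$ then $\lambda\in M_k\subseteq N_k$ for some $k$, whence $\lambda\in N_n$ for $n\ge k$. For such $n$, both $\lambda$ and $M_n=N_n\cap H(\theta)$ lie in $N_{n+1}$; since $|M_n|=\omega_1<\lambda$ with $\lambda$ regular, $\sup(M_n\cap\lambda)<\lambda$, so $\sup(M_n\cap\lambda)+1$ is an ordinal below $\lambda$ definable in $H(\chi)$ from $M_n$ and $\lambda$, hence it lies in $N_{n+1}\cap H(\theta)=M_{n+1}$, giving $\sup(M_n\cap\lambda)<\sup(M_{n+1}\cap\lambda)$. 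Since $\lambda\le\theta$ we have $M\cap\lambda=(\bigcup_n N_n)\cap\lambda$, so $\sup(M\cap\lambda)=\sup_n\sup(M_n\cap\lambda)$ is the supremum of a strictly increasing $\omega$-sequence of ordinals below $\lambda$ and therefore has cofinality $\omega$.

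The only real obstacle is the bookkeeping in the recursion — confirming that at each stage the collection of admissible next models is stationary, so that it meets the lifted copy of $S$ — and this is settled by the routine lifting and projection facts for stationarity and clubs on $P_{\omega_2}$, which I would cite rather than reprove; granting those, the construction and the verification above are straightforward.
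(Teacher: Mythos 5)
Your proof is correct and follows essentially the same route as the paper: build an $\omega$-chain of weakly $\omega_1$-guessing models each (essentially) belonging to the next, take the union, apply Lemma 4.1, and observe that the suprema $\sup(M_n\cap\lambda)$ witness countable cofinality. The only cosmetic difference is bookkeeping: the paper builds the chain directly in $P_{\omega_2}(H(\theta))$ with $M_n\in M_{n+1}$ (legitimate since $\theta$ is regular, so $M_n\in H(\theta)$) and witnesses stationarity via closure under a function $F$, whereas you pass to a larger $H(\chi)$ and use the standard lifting/projection facts for clubs and stationary sets, which is equally fine.
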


\begin{proof}
Let $F : H(\theta)^{<\omega} \to H(\theta)$ be a function. 
Since there are stationarily many $N \in P_{\omega_2}(H(\theta))$ 
which are weakly $\omega_1$-guessing, we can inductively define a sequence 
$\langle M_n : n < \omega \rangle$ such that for all $n < \omega$, 
$M_n \prec H(\theta)$, $M_n$ is closed under $F$, 
$M_n$ is weakly $\omega_1$-guessing, and 
$M_n \in M_{n+1}$.

Let $M := \bigcup \{ M_n : n < \omega \}$. 
By Lemma 4.1, $M$ is weakly $\omega_1$-guessing. 
For any regular cardinal $\lambda$ in $M \cup \{ \theta \}$,  
the countable set 
$\{ \sup(M_n \cap \lambda) : n < \omega \}$ is cofinal in 
$\sup(M \cap \lambda)$, and hence 
$\cf(\sup(M \cap \lambda)) = \omega$.
\end{proof}

In particular, being a weakly $\omega_1$-guessing model does not 
imply being internally unbounded.

\begin{lemma}
Let $M \in P_{\omega_2}(H(\omega_2))$ be a set such that 
$\omega_1 \subseteq M$ and $M \prec H(\omega_2)$. 
Then $M$ is $\omega_1$-guessing iff $M$ is weakly $\omega_1$-guessing.
\end{lemma}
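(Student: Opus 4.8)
The plan is to prove the two implications separately. The direction ``$\omega_1$-guessing $\Rightarrow$ weakly $\omega_1$-guessing'' is immediate: since $M \prec H(\omega_2)$ and $\omega_1 \subseteq M$, we have $\omega_1 \in M$ and $M$ correctly computes that $\omega_1$ is regular and uncountable, so Corollary 3.6 applied with $\kappa = \omega_1$ gives that $M$ is weakly $\omega_1$-guessing.

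For the converse, assume $M$ is weakly $\omega_1$-guessing and let $d \subseteq M$ be a set of ordinals with $\sup(d) < \sup(M \cap On)$ which satisfies that $d \cap b \in M$ for every countable $b \in M$; we must produce $d' \in M$ with $d' \cap M = d$. We may assume $d \neq \emptyset$. Since $M \subseteq H(\omega_2)$, every ordinal of $M$ lies below $\omega_2$, so $\sup(M \cap On) = \sup(M \cap \omega_2)$, and we may fix an ordinal $\gamma \in M$ with $\sup(d) < \gamma$; then $d \subseteq \gamma$. As $H(\omega_2) \models |\gamma| \leq \omega_1$ and $M \prec H(\omega_2)$, there is a surjection $s : \omega_1 \to \gamma$ with $s \in M$. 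Define $f : \omega_1 \to 2$ by setting $f(\xi) = 1$ iff $s(\xi) \in d$.

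The heart of the argument is the claim that $f \restriction \alpha \in M$ for every $\alpha < \omega_1$. To see this, observe that $f \restriction \alpha$ is computable, in an absolute way, from the two objects $s \restriction \alpha$ and $d \cap s[\alpha]$, since for $\xi < \alpha$ we have $f(\xi) = 1$ iff $s(\xi) \in d \cap s[\alpha]$. Now $s \restriction \alpha \in M$ because $s \in M$ and $\alpha \in \omega_1 \subseteq M$, and consequently $s[\alpha] = \ran(s \restriction \alpha) \in M$; moreover $s[\alpha]$ is a countable subset of $\gamma$, so by the hypothesis on $d$ we get $d \cap s[\alpha] \in M$. Hence $f \restriction \alpha \in M$, proving the claim. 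Now apply the fact that $M$ is weakly $\omega_1$-guessing, in the form recorded just after Definition 3.1: since $f : \omega_1 \to On$ has all of its proper initial segments in $M$, we conclude $f \in M$. Then $A := \{ \xi < \omega_1 : f(\xi) = 1 \} \in M$, hence $s[A] \in M$, and since $s$ maps onto $\gamma \supseteq d$ one checks directly that $s[A] = d$. Therefore $d \in M$, and taking $d' := d$ works since $d \subseteq M$ gives $d' \cap M = d$. Thus $M$ is $\omega_1$-guessing.

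The only step demanding any care is the claim that the initial segments of $f$ belong to $M$; this is exactly where both hypotheses enter, the covering hypothesis on $d$ being used for the countable sets $s[\alpha]$ and the elementarity of $M$ supplying $s$ together with its restrictions. The restriction to $H(\omega_2)$ (rather than $H(\theta)$ for larger $\theta$, where weak $\omega_1$-guessing is strictly weaker than $\omega_1$-guessing) is essential precisely here: every ordinal of $M$ lies below $\omega_2$ and hence has size at most $\omega_1$ as computed in the ambient structure, so an arbitrary bounded subset $d \subseteq M$ can be coded by a subset of $\omega_1$ via a surjection belonging to $M$, whereas for larger $\theta$ the set $d$ may be bounded only by ordinals of size exceeding $\omega_1$ and no such coding is available.
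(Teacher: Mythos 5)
Your proof is correct and follows essentially the same route as the paper's: code $d$ as a subset of $\omega_1$ via a map in $M$, verify that the initial segments of the characteristic function lie in $M$ using the covering hypothesis applied to the countable images $s[\alpha]$, apply weak $\omega_1$-guessing, and pull back. The only cosmetic difference is that you use a surjection $s:\omega_1\to\gamma$, which lets you skip the paper's separate treatment of the case where $d$ is a bounded subset of $\omega_1$ (the paper instead fixes a bijection onto an ordinal $\alpha\ge\omega_1$).
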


\begin{proof}
By Corollary 3.6, if $M$ is $\omega_1$-guessing, then $M$ 
is weakly $\omega_1$-guessing. 
Assume that $M$ is weakly $\omega_1$-guessing, and we will show 
that $M$ is $\omega_1$-guessing.

Let $X$ be a bounded subset of $M \cap On = M \cap \omega_2$ 
such that for any countable set $a \in M$, $a \cap X \in M$. 
We will show that $X \in M$. 
If $X$ is a bounded subset of $\omega_1$, then we can fix a countable 
ordinal $\beta$ with $\sup(X) < \beta$. 
Then by assumption, $X = X \cap \beta$ is in $M$. 
So assume that $\sup(X) \ge \omega_1$.

Since $\sup(X) < \sup(M \cap \omega_2)$, 
fix $\alpha \in M \cap \omega_2$ such that $\sup(X) < \alpha$. 
By elementarity, fix a bijection $f : \omega_1 \to \alpha$ in $M$. 
Then $Y := f^{-1}(X)$ is a subset of $\omega_1$. 
Let $g : \omega_1 \to 2$ be the characteristic function of $Y$. 
We claim that for all $\beta < \omega_1$, $g \restriction \beta \in M$. 
If $\beta < \omega_1$, then $f[\beta]$ is a countable set in $M$, and therefore 
$f[\beta] \cap X$ is in $M$. 
Hence, $f^{-1}(f[\beta] \cap X) = Y \cap \beta$ is in $M$. 
Since $g \restriction \beta$ is the characteristic function of $Y \cap \beta$, 
$g \restriction \beta \in M$.

As $M$ is weakly $\omega_1$-guessing, it follows that 
$g$ is in $M$. 
As $g$ is the characteristic function of $Y$, $Y \in M$. 
By elementarity, $f[Y] = X$ is in $M$.
\end{proof}

\begin{corollary}
The principles $\textsf{GMP}(\omega_2)$ and 
$\textsf{wGMP}(\omega_2)$ are equivalent.
\end{corollary}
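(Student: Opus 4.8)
The plan is to read off both implications from Lemma 4.3, using the standard fact that being an elementary substructure of $H(\omega_2)$ is a club condition. One direction is immediate: $\textsf{GMP}(\omega_2) \Rightarrow \textsf{wGMP}(\omega_2)$ is the instance $\lambda = \omega_2$ of Corollary 3.12. So the content lies in the converse.

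The preliminary observation I would record is that, for any $N \in P_{\omega_2}(H(\omega_2))$ with $\omega_1 \subseteq N$ and $N \prec H(\omega_2)$, being weakly guessing is equivalent to being weakly $\omega_1$-guessing. Every ordinal of $N$ belongs to $H(\omega_2)$ and so is below $\omega_2$, and for a cardinal $\kappa < \omega_2$ the statements ``$\kappa$ is a cardinal'', ``$\kappa$ is regular'', and ``$\kappa$ is uncountable'' are decided correctly by $H(\omega_2)$ and hence by $N$. Since the unique regular uncountable cardinal below $\omega_2$ is $\omega_1$, and $\omega_1 \in N$ because $\omega_1 \subseteq N$, the only $\kappa \in N$ which $N$ models to be regular uncountable is $\omega_1$; so by Definition 3.1 such an $N$ is weakly guessing exactly when it is weakly $\omega_1$-guessing.

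For the converse I would argue as follows. Assuming $\textsf{wGMP}(\omega_2)$, fix a stationary $S \subseteq P_{\omega_2}(H(\omega_2))$ whose members $N$ satisfy $\omega_1 \subseteq N$ and are weakly guessing. The set of $N \in P_{\omega_2}(H(\omega_2))$ with $\omega_1 \subseteq N$ and $N \prec H(\omega_2)$ contains a club, so its intersection with $S$ is stationary. For $N$ in this intersection, $|N| = \omega_1$ (since $\omega_1 \subseteq N$ and $|N| < \omega_2$), $N \prec H(\omega_2)$, and by the observation above $N$ is weakly $\omega_1$-guessing; hence Lemma 4.3 yields that $N$ is $\omega_1$-guessing. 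Therefore there are stationarily many $\omega_1$-guessing models in $P_{\omega_2}(H(\omega_2))$, which is precisely $\textsf{GMP}(\omega_2)$.

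There is no real obstacle: given Lemma 4.3, the proof is routine. The only steps that call for any care are the reduction of ``weakly guessing'' to ``weakly $\omega_1$-guessing'' inside $H(\omega_2)$, and the standard passage to the club of elementary submodels containing $\omega_1$, which is what makes Lemma 4.3 applicable.
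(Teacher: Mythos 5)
Your proof is correct and follows the same route as the paper, which simply declares the corollary ``immediate from Lemma 4.3''; you have merely made explicit the routine steps (the reduction of weak guessing to weak $\omega_1$-guessing inside $H(\omega_2)$ and the intersection with the club of elementary submodels containing $\omega_1$) that the paper leaves implicit. Nothing further is needed.
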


\begin{proof}
Immediate from Lemma 4.3.
\end{proof}

\begin{corollary}
Assuming $\textsf{GMP}(\omega_2)$,  
there are stationarily many $M \in P_{\omega_2}(H(\omega_2))$ 
such that $\sup(M \cap \omega_2)$ has cofinality $\omega$ and 
$M$ is $\omega_1$-guessing (and in particular, $M$ is not 
internally unbounded).
\end{corollary}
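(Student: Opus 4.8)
The plan is to chain together Corollaries 3.6, 4.2, and 4.4 with Lemma 4.3; the statement really just packages these results together. Starting from $\textsf{GMP}(\omega_2)$, there are by definition stationarily many $\omega_1$-guessing models in $P_{\omega_2}(H(\omega_2))$, and by Corollary 3.6 each of these is in particular weakly $\omega_1$-guessing. (Equivalently, one may quote Corollary 4.4 to pass to $\textsf{wGMP}(\omega_2)$.) Thus the hypothesis of Corollary 4.2 holds with $\theta = \omega_2$.

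Next I would apply Corollary 4.2 with $\theta = \omega_2$ to obtain stationarily many $M \in P_{\omega_2}(H(\omega_2))$ such that $M \prec H(\omega_2)$, $\omega_1 \subseteq M$, $M$ is weakly $\omega_1$-guessing, and $\cf(\sup(M \cap \lambda)) = \omega$ for every regular cardinal $\lambda \ge \omega_2$ lying in $M \cup \{\omega_2\}$. Since $M$ is a subset of $H(\omega_2)$, it contains no cardinal $\ge \omega_2$, so the only relevant instance is $\lambda = \omega_2$, which gives $\cf(\sup(M \cap \omega_2)) = \omega$, as required.

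It then remains to see that such an $M$ is $\omega_1$-guessing and not internally unbounded. For the first, the hypotheses of Lemma 4.3 — namely $M \in P_{\omega_2}(H(\omega_2))$, $\omega_1 \subseteq M$, and $M \prec H(\omega_2)$ — are exactly what has been arranged, so Lemma 4.3 yields that $M$ is $\omega_1$-guessing. For the second, I would fix a countable set $c \subseteq M \cap \omega_2$ cofinal in $\sup(M \cap \omega_2)$; this exists because $M \cap \omega_2$ is cofinal in its own supremum and that supremum has cofinality $\omega$. Then $c$ is a countable subset of $M$. If some countable $a \in M$ covered $c$, then by elementarity $\sup(a \cap \omega_2)$ would be an element of $M \cap \omega_2$; but since $|M| = \omega_1 < \cf(\omega_2)$, the ordinal $\sup(M \cap \omega_2)$ is not itself in $M$, so $\sup(a \cap \omega_2) < \sup(M \cap \omega_2)$, contradicting that $a$ contains the set $c$, which is cofinal in $\sup(M \cap \omega_2)$. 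Hence no countable set in $M$ covers $c$, and $M$ is not internally unbounded.

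I do not expect a genuine obstacle here; the one point deserving a moment's care is the verification that the model $M$ produced by Corollary 4.2 is an honest elementary substructure of $H(\omega_2)$, which is what licenses the use of Lemma 4.3. This is immediate from the construction in the proof of Corollary 4.2, where $M$ is obtained as a countable union of elementary substructures $M_n \prec H(\theta)$ with $\theta = \omega_2$, so that $M = \bigcup_{n<\omega} M_n \prec H(\omega_2)$.
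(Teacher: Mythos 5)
Your proposal is correct and follows essentially the same route as the paper, which derives the corollary immediately from Corollary 4.2 and Lemma 4.3 (the passage from $\textsf{GMP}(\omega_2)$ to weak guessing being exactly Lemma 4.3/Corollary 4.4). The only tiny nitpick is that $\sup(M \cap \omega_2) \notin M$ needs elementarity (if it were in $M$, its successor would be too), not just $|M| = \omega_1 < \cf(\omega_2)$, but this is a one-line repair and does not affect the argument.
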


\begin{proof}
Immediate from Corollary 4.2 and Lemma 4.3.
\end{proof}

\section{Forcing axioms and weak guessing}

The goal of the rest of the paper is to prove the consistency of 
the existence of stationarily 
many indestructibly weakly guessing models $N$ which have a 
countable bounded subset which is not covered by any countable set in $N$. 
We will use the following theorem of Woodin.

\begin{thm}[Woodin \cite{woodin}]
Let $\p$ be a forcing poset, and assume that for any family of $\omega_1$ many 
dense subsets of $\p$, there exists a filter on $\p$ which meets each 
set in the family. 
Then for any regular cardinal $\theta$ with $\p \in H(\theta)$, there are 
stationarily many $N \in P_{\omega_2}(H(\theta))$ with 
$\omega_1 \subseteq N$ for which there exists an $N$-generic filter 
on $\p$.
\end{thm}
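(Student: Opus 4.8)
The plan is to run the standard argument producing a filter that meets $\omega_1$-many dense sets and threads a continuous $\in$-chain of countable elementary substructures of $H(\theta)$, then paste that chain together into the desired $N$ of size $\omega_1$. First I would fix a regular $\theta$ with $\p \in H(\theta)$ and a function $F : H(\theta)^{<\omega} \to H(\theta)$ witnessing the club we must meet; the goal is to find $N \in P_{\omega_2}(H(\theta))$ with $\omega_1 \subseteq N$, closed under $F$, such that some filter $G$ on $\p$ is $N$-generic. Build a $\subseteq$-increasing, $\in$-continuous chain $\langle N_\alpha : \alpha < \omega_1\rangle$ of countable elementary substructures of $(H(\theta),\in,\p,F)$ with $\alpha \subseteq N_\alpha$ and $N_\alpha \in N_{\alpha+1}$, and set $N := \bigcup_{\alpha<\omega_1} N_\alpha$. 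Then $|N| = \omega_1$, $\omega_1 \subseteq N$, $N$ is closed under $F$, and $N \prec H(\theta)$; so $N$ lands in the club determined by $F$ and has the right size, and what remains is to arrange the $N$-generic filter.

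The key step is to enumerate, while building the chain, all dense subsets of $\p$ that will ever appear in $N$: since each $N_\alpha$ is countable, the collection $\{D : D \in N, \ D \text{ dense in } \p\} = \bigcup_{\alpha<\omega_1}\{D \in N_\alpha : D \text{ dense}\}$ has size at most $\omega_1$. Enumerate it as $\langle D_\alpha : \alpha < \omega_1\rangle$, arranging along the construction that $D_\alpha \in N_{\alpha+1}$ (a bookkeeping argument: at stage $\alpha+1$ we may choose $N_{\alpha+1}$ to contain any prescribed element of $H(\theta)$, in particular the $\alpha$-th dense set in some fixed well-ordering of what has appeared so far). Now apply the hypothesis on $\p$ to the family $\{D_\alpha : \alpha < \omega_1\}$ to obtain a filter $G$ on $\p$ meeting every $D_\alpha$. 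This $G$ need not be a subset of $N$, but it meets every dense set belonging to $N$, and that is precisely the definition of $G$ being $N$-generic. One subtlety to address: the enumeration of the dense sets must be fixed in advance or managed by a diagonal bookkeeping, since $N$ itself is only determined at the end; the clean way is to interleave — at stage $\alpha$ use the well-ordering of $H(\theta)$ to let $D_\alpha$ be the least dense set in $N_\alpha$ not yet listed (if any), and check at the end that every dense set in $N$ gets listed because $N = \bigcup N_\alpha$ and each $N_\alpha$ is countable.

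The main obstacle is the circularity just described: we want to apply the hypothesis to the dense sets in $N$, but $N$ is not available until the construction finishes. The resolution is routine bookkeeping — because $|N| = \omega_1$ and $N$ is an increasing union of countable pieces, any $\omega_1$-length enumeration that "eventually lists everything appearing at every countable stage" captures all of $N$'s dense sets, and such an enumeration is produced by the standard leftmost-not-yet-handled scheduling. A second, minor point to verify is that meeting these $\omega_1$-many dense sets genuinely yields $G \cap N \cap D \neq \emptyset$ for each dense $D \in N$ (as opposed to merely $G \cap D \neq \emptyset$): this holds because $D_\alpha \in N_{\alpha+1} \prec H(\theta)$ is dense, so $D_\alpha \cap N_{\alpha+1}$ is dense in $\p$ as well by elementarity, and one should list $D_\alpha \cap N$ (equivalently, arrange $D_\alpha \subseteq N$ is unnecessary — instead list the dense set $D_\alpha \cap N_{\alpha+1}$, which lies in $N$) so that the filter $G$ meeting it does so inside $N$. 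With that adjustment the filter $G$ satisfies $G \cap M \cap D \neq \emptyset$ for every dense $D \in N$, i.e., $G$ is $N$-generic, completing the proof. Finally, since $F$ was arbitrary, the set of such $N$ is stationary in $P_{\omega_2}(H(\theta))$.
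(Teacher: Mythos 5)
Your chain construction and the bookkeeping enumeration of the dense sets appearing in $N$ are standard and unproblematic; the proof breaks at exactly the point you flag as a ``minor point to verify.'' You claim that since $D_\alpha \in N_{\alpha+1} \prec H(\theta)$ is dense in $\p$, the trace $D_\alpha \cap N_{\alpha+1}$ is dense in $\p$ ``by elementarity.'' Elementarity gives only that $D_\alpha \cap N_{\alpha+1}$ is dense in the suborder $\p \cap N_{\alpha+1}$; in general it is not even predense in $\p$. For example, let $\p = \coll(\omega_1,\omega_2)$ (a poset to which the theorem is meant to apply, e.g.\ under Martin's maximum), and let $D$ be the dense set of conditions $p$ with $0 \in \dom(p)$. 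If $\gamma \in \omega_2 \setminus N_{\alpha+1}$, then the condition $\{(0,\gamma)\}$ is incompatible with every element of $D \cap N_{\alpha+1}$, since every such element sends $0$ to an ordinal of $N_{\alpha+1}$; the same applies to $D \cap N$. Consequently the hypothesis of the theorem, which produces filters only for families of sets that are genuinely dense in $\p$, cannot be applied to the family $\{D_\alpha \cap N_{\alpha+1} : \alpha < \omega_1\}$ (or $\{D_\alpha \cap N\}$), and a filter meeting the honestly dense sets $D_\alpha$ yields only $G \cap D_\alpha \ne \emptyset$, not $G \cap D_\alpha \cap N \ne \emptyset$. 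So $N$-genericity is not obtained, and the conclusion does not follow from your argument.

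This gap is the mathematical heart of the theorem, not a technicality. When $|\p| \le \omega_1$ one can simply arrange $\p \subseteq N$, so that $D \cap N = D$ and your argument goes through; but the theorem is applied in Section 7 to a poset of size far above $\omega_1$ which collapses large sets to have size $\omega_1$, and for such posets the traces $D \cap N$ of dense sets on a model fixed in advance are typically not predense, exactly as in the example above. The circularity you mention (that $N$ is only determined at the end) is the real obstruction: one cannot first fix $N$ and then ask the forcing axiom for a filter that is generic over it, because no family of genuinely dense subsets of $\p$ obviously certifies genericity over a prescribed $N$. The known proof (Woodin's Theorem 2.53, which this paper cites rather than reproves) has to entangle the two, choosing the dense sets so that a suitable model can be extracted from the filter itself rather than being specified beforehand; some such additional idea is needed, and your proposal does not contain it.
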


Recall that a filter $G$ on $\p$ is $N$-generic if for any dense set 
$D \in N$, $D \cap G \cap N \ne \emptyset$.

\begin{lemma}
Let $\p$ be a forcing poset, 
$\theta \ge \omega_2$ a regular cardinal with $\p \in H(\theta)$, 
and $N \in P_{\omega_2}(H(\theta))$ such that $\omega_1 \subseteq N$ 
and $N \prec (H(\theta),\in,\p)$. 
Suppose that $G$ is an $N$-generic filter on $\p$. 
Assume that $p \in G$, $\lambda \in N$, 
and $p$ forces that there is a countable subset of $\lambda$ which is not 
covered by any countable set in $V$. 
Then there is a countable subset of $N \cap \lambda$ which is not covered 
by any countable set in $N$.
\end{lemma}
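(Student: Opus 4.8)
The plan is to use $N$-genericity to pull a suitable name for the alleged countable subset of $\lambda$ down into $N$, extract the actual countable set from the $N$-generic filter $G$, and then argue that this set cannot be covered inside $N$ because a covering would contradict the forcing statement. First I would work inside $N$: since $p \in N \cap \p$ (by elementarity, as $N \prec (H(\theta),\in,\p)$ and the relevant forcing statement about $p$ and $\lambda$ is expressible there) and $p$ forces the existence of a countable subset of $\lambda$ not covered by any countable ground-model set, I would fix in $N$ a $\p$-name $\dot c$ for such a set, say with $p \Vdash$ ``$\dot c$ is a countable subset of $\check\lambda$ not covered by any countable set in $V$.'' By elementarity there is in $N$ a $\p$-name $\dot e : \omega \to \check\lambda$ with $p \Vdash \ran(\dot e) = \dot c$. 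Now let $c := \{ \alpha : \exists q \in G,\ \exists n < \omega,\ q \Vdash_\p \dot e(\check n) = \check\alpha \}$; I claim this is the desired countable subset of $N \cap \lambda$.

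The key steps are then: (i) $c$ is countable and $c \subseteq N \cap \lambda$. For each $n < \omega$, the set $D_n$ of conditions deciding $\dot e(\check n)$ is dense and lies in $N$ (by elementarity, since $\dot e \in N$), so by $N$-genericity there is $q \in G \cap N \cap D_n$; the value $\alpha$ it forces lies in $N$ (again by elementarity), and there is exactly one such value per $n$ since any two elements of $G$ are compatible and conditions in $D_n$ decide $\dot e(\check n)$. Hence $c = \{\alpha_n : n < \omega\}$ for ordinals $\alpha_n \in N \cap \lambda$. (ii) $c$ is forced by $p$ to equal the realization of $\dot c$: a standard density/genericity argument shows $p \in G$ and the definition of $c$ via $G$ make $c$ the value of $\ran(\dot e) = \dot c$ in any generic extension by a filter extending $G$ — but more carefully, since we only have an $N$-generic filter, I would instead argue directly at the level of forcing, as in the analogous arguments in Theorem 2.3: for each $n$, $p$ together with the chosen $q \in G \cap N \cap D_n$ forces $\dot e(\check n) = \check\alpha_n$, hence forces $\check\alpha_n \in \dot c$; conversely I'd want that $p$ forces $\dot c \subseteq \check c$, which follows because $p \Vdash \dot c = \ran(\dot e)$ and for each $n$ some condition in $G$ (hence compatible with everything relevant) pins $\dot e(\check n)$ to a member of $c$.

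Finally, step (iii), the contradiction: suppose toward a contradiction that $c$ is covered by some countable set $a \in N$; without loss $a \subseteq \lambda$ and $a$ is countable in $N$, hence genuinely countable since $\omega_1 \subseteq N$. Then $p \Vdash \dot c \subseteq \check a$, because $p$ forces $\dot c = \ran(\dot e)$ and each $\dot e(\check n)$ is forced by a condition in $G$ — compatible with $p$ — to land in $c \subseteq a$; a density argument across the $D_n \in N$ upgrades this to $p \Vdash \dot c \subseteq \check a$. But $a$ is a countable set in $V$, so this contradicts the choice of $\dot c$ as a name for a set not covered by any countable set in $V$. Therefore no such $a \in N$ exists, and $c$ witnesses the conclusion.

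The main obstacle I expect is step (ii)/(iii): passing from an honest generic extension argument to one that uses only $N$-genericity, i.e., making precise that the condition $p$ (not a fully generic filter) already forces $\dot c = \check c$ modulo the choices extracted from $G \cap N$. The right way to handle this, mirroring the bookkeeping in the proof of Theorem 2.3, is to never leave the ground model: work entirely with the forcing relation, use that the finitely-many relevant conditions from $G \cap N$ are pairwise compatible and compatible with $p$ (as all lie in $G$), amalgamate finitely many of them below $p$ as needed, and run density arguments through the dense sets $D_n \in N$. Once that is set up correctly, both the ``$c \subseteq$ realization of $\dot c$'' direction and the covering contradiction are routine.
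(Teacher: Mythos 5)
Your construction of the countable set is the same as the paper's (fix names $\dot c$, $\dot e$ in $N$ by elementarity, read off the values of $\dot e(n)$ decided by conditions in $G\cap N$ via the dense sets $D_n\in N$), and step (i) is fine. The gap is in steps (ii)/(iii), and it is not just bookkeeping: the statement you want there is false, so no amalgamation or density argument can establish it. You claim that if $c\subseteq a$ for some countable $a\in N$, then $p\Vdash \dot c\subseteq\check a$ (and similarly you want $p\Vdash \dot c\subseteq\check c$). But $G$ is only $N$-generic, not $V$-generic; the conditions $q_n\in G\cap N$ force $\dot e(n)=\alpha_n$ only below themselves, and extensions of $p$ incompatible with the $q_n$'s are free to send $\dot e(n)$ anywhere. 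Indeed, since $p$ forces that $\dot c$ is not covered by any countable ground-model set, $p\Vdash\dot c\subseteq\check a$ is outright false for every countable $a\in V$, so the very hypothesis you use to get the contradiction also shows your intermediate claim cannot be proved. In short, you cannot transfer information from the partial filter $G$ back into a forcing statement about $p$; the implication only goes the other way.

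The correct way to finish, which is what the paper does, is to never compare $p$'s forcing with the extracted set $c$ globally, but instead to argue per candidate cover: given a countable $a\in N$, note that $p$ forces $\dot c\not\subseteq\check a$, and let $D_a$ be the dense set of conditions that are either incompatible with $p$, or below $p$ and decide, for some $n$ and $\beta$, that $\dot e(n)=\beta\in\dot c\setminus a$. Density of $D_a$ uses exactly the hypothesis on $p$, and $D_a\in N$ by elementarity (it is defined from $p$, $\dot e$, $\dot c$, $a$, all in $N$). By $N$-genericity pick $q\in D_a\cap G\cap N$; since $p\in G$, $q$ is compatible with $p$, hence $q\le p$ and $q\Vdash\dot e(n)=\beta\in\dot c\setminus a$ for some $n,\beta\in N$. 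Then $\beta\in c\setminus a$, so $a$ does not cover $c$. This replaces your step (iii) entirely; with that change the proof goes through.
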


\begin{proof}
Suppose that $p$ forces that $\dot a$ is a countable subset of $\lambda$ 
which is not covered by any countable set in $V$. 
By elementarity and the fact that $G$ is an $N$-generic 
filter, without loss of generality we may assume that $p$ and $\dot a$ are 
in $N$. 
By elementarity, we can fix a $\p$-name $\dot f$ in $N$ which $p$ forces is 
a surjection of $\omega$ onto $\dot a$. 
Now let $b$ be the set of $\beta \in N$ such that for some $q \in G \cap N$ and 
$n < \omega$, $q \Vdash \dot f(n) = \check \beta$. 
Since $G$ is a filter, each $n$ has a unique such ordinal $\beta$, and hence 
$b$ is a countable subset of $N \cap \lambda$.

Let $c$ be a countable set in $N$, and we will show that 
$b$ is not a subset of $c$.  
Note that $p$ forces that $\dot a$ is not a subset of $c$. 
Let $D$ be the dense set of conditions which are either incompatible 
with $p$, or below $p$ and decide for some $n$ and $\beta$ 
that $\dot f(n) = \beta \in \dot a \setminus c$. 
By elementarity, $D \in N$. 
Since $G$ is $N$-generic, fix $q \in D \cap G \cap N$. 
Then $q \le p$, and for some $n$ and $\beta$ in $N$, 
$q \Vdash \dot f(n) =\beta \in \dot a \setminus c$. 
Then $\beta \in b \setminus c$, and we are done.
\end{proof}

\begin{lemma}
Let $\p$ be a forcing poset, 
$\theta \ge \omega_2$ a regular cardinal with $\p \in H(\theta)$, 
and $N \in P_{\omega_2}(H(\theta))$ such that $\omega_1 \subseteq N$ 
and $N \prec (H(\theta),\in,\p)$. 
Suppose that $G$ is an $N$-generic filter on $\p$. 
Assume that $p \in G$, $\lambda \in N$, 
and $p$ forces that there is a countable subset $a$ of $\lambda$ 
such that for any countable set $c$ in $V$, $a \cap c$ is finite. 
Then there is a countable subset $b$ of $N \cap \lambda$ such that for any 
countable set $c$ in $N$, $b \cap c$ is finite.
\end{lemma}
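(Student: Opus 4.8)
The plan is to run the proof of Lemma 5.2 almost verbatim, making one structural adjustment and one genuinely new twist. As in that proof, elementarity together with the $N$-genericity of $G$ lets us assume that $p$ and $\dot a$ are in $N$, where $p \Vdash$ ``$\dot a$ is a countable subset of $\lambda$ whose intersection with every ground model countable set is finite''. The structural point is that before fixing an enumeration of $\dot a$ we need to know whether $\dot a$ is infinite: using an appropriate dense set in $N$ (conditions incompatible with $p$, together with conditions below $p$ deciding finiteness of $\dot a$) and the $N$-genericity of $G$, choose $q \in G \cap N$ with $q \le p$ such that $q$ decides whether $\dot a$ is finite. If $q$ forces $\dot a$ finite, then a further density argument combined with $N$-genericity yields $q' \in G \cap N$ below $q$ and a finite set of ordinals $s$ with $q' \Vdash \dot a = \check s$; by elementarity and uniqueness $s \in N$, so $s$ is a finite subset of $N \cap \lambda$, and $b := s$ works trivially.

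So assume $q \Vdash$ ``$\dot a$ is infinite''. Note $q$ still forces the stated property since $q \le p$. Fix a $\p$-name $\dot f \in N$ which $q$ forces to be an injection of $\omega$ into $\dot a$. Exactly as in Lemma 5.2, let $b$ be the set of $\beta$ for which there are $q'' \in G \cap N$ and $n < \omega$ with $q'' \Vdash \dot f(n) = \check\beta$; since $G$ is a filter, the value $\beta_n$ attached to each $n$ is unique, and since $q$ forces $\dot f$ injective, distinct values of $n$ yield distinct $\beta_n$. Hence $b$ is a countably infinite subset of $N \cap \lambda$.

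It remains to show that $b \cap c$ is finite for every countable $c \in N$, and this is where the argument departs from Lemma 5.2. In place of a dense set extracting a single element of $\dot a \setminus c$, consider the set $D$ of all conditions incompatible with $q$, together with all $q' \le q$ for which there is some $k < \omega$ with $q' \Vdash \forall n \ge \check k\ \dot f(n) \notin \check c$. Because $q$ forces both that $\dot a \cap c$ is finite and that $\dot f$ is injective, $q$ forces $\{ n : \dot f(n) \in c \}$ to be finite, and this makes $D$ dense below $q$; also $D \in N$ by elementarity. Pick $q' \in D \cap G \cap N$; since $q'$ is compatible with $q$ it must lie below $q$, so $q' \Vdash \forall n \ge \check k\ \dot f(n) \notin \check c$ for some $k < \omega$. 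Now if $\beta \in b \cap c$, say $\beta = \beta_n$ with witness $q'' \in G \cap N$, then a common lower bound of $q'$ and $q''$ in $G$ would, in the case $n \ge k$, force both $\dot f(n) \notin \check c$ and $\dot f(n) = \check\beta \in \check c$, which is impossible. Hence $n < k$, so $b \cap c \subseteq \{ \beta_0, \dots, \beta_{k-1} \}$ is finite.

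The only real obstacle is the interplay that forces the case split: turning ``$\dot a \cap c$ is finite'' into ``$\dot f(n) \notin c$ for all large $n$'' requires $\dot f$ to be injective, and an injection of $\omega$ into $\dot a$ exists only after $\dot a$ has been forced infinite, which is why the finite case must be peeled off first. Everything else is the same routine elementarity and $N$-genericity bookkeeping already performed in the proof of Lemma 5.2.
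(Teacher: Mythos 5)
Your proof is correct, and since the paper proves this lemma only by saying it is ``similar to the proof of Lemma 5.2,'' your argument is essentially the intended adaptation: pass to $p,\dot a\in N$, interpret a name $\dot f$ enumerating $\dot a$ along $G\cap N$ to get $b\subseteq N\cap\lambda$, and then meet a dense set in $N$ that encodes the forced finiteness of $\dot a\cap\check c$ to bound $b\cap c$. Your injectivity requirement and the resulting finite/infinite case split are harmless but avoidable: with a surjection $\dot f$ as in Lemma 5.2 one can instead meet the dense set of conditions that are incompatible with $p$ or force $\dot a\cap\check c\subseteq\check s$ for some finite $s$, which bounds $b\cap c$ directly without first deciding whether $\dot a$ is finite.
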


\begin{proof}
Similar to the proof of Lemma 5.2.
\end{proof}

In previous work, 
Krueger \cite{jk8} used Woodin's theorem to prove that \textsf{PFA} implies 
the existence of 
stationarily many models which are internally club but not internally 
approachable. 
The proof involved specializing a certain tree of height and size $\omega_1$ 
which was built on a model $N$. 
Later, 
Viale-Weiss \cite{vialeweiss} expanded on Krueger's application of 
Woodin's theorem to produce $\omega_1$-guessing models.

The next result was originally proven in \cite{vialeweiss} for forcing posets 
which have the $\omega_1$-approximation property and models which are 
$\omega_1$-guessing. 
We have modified the argument to handle forcing posets which have the 
weak $\kappa$-approximation property and models which are 
indestructibly weakly $\kappa$-guessing.

\begin{proposition}
Let $\lambda \ge \omega_2$ be a regular cardinal and $A \subseteq \lambda$ a 
set of regular uncountable cardinals. 
Assume that $\p$ is a forcing poset which has the weak 
$\kappa$-approximation property for all $\kappa \in A$ and forces that  
$(2^{\lambda})^V$ has size $\omega_1$. 
Then there exist a set $w$ and a $\p$-name $\dot \q$ for an 
$\omega_1$-c.c.\ forcing poset satisfying: 
for any regular cardinal $\chi$ with $\p$, $\lambda$, $w$, and $\dot \q$ 
in $H(\chi)$, 
for any set $M \in P_{\omega_2}(H(\chi))$ such that 
$\omega_1 \subseteq M$ and 
$M \prec (H(\chi),\in,\p * \dot \q,\lambda,A,w)$, 
if there exists an $M$-generic filter on $\p * \dot \q$, 
then $M \cap H(\lambda)$ is indestructibly weakly $\kappa$-guessing for 
all $\kappa \in A \cap M$.
\end{proposition}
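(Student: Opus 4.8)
The plan is to adapt the machinery of Viale--Weiss \cite{vialeweiss} (itself built on Krueger's \cite{jk8} use of Woodin's theorem) from the $\omega_1$-guessing / $\omega_1$-approximation setting to the present weak setting, replacing their tree --- whose cofinal branches would violate the $\omega_1$-approximation property of $\p$ --- by one whose cofinal branches would violate the weak $\kappa$-approximation property of $\p$ for some $\kappa \in A$. Working in $V^\p$, the hypothesis that $(2^\lambda)^V$ has size $\omega_1$ lets me fix an enumeration $\langle b_\xi : \xi < \omega_1 \rangle$ of the relevant ground model objects (bounded $On$-valued functions whose domain is an ordinal below $\lambda$, coded appropriately). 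Using this enumeration I would define, again in $V^\p$, a tree $T$ of height $\omega_1$ whose node on level $\xi$ records a $\xi$-th approximation to a hypothetical function $f$ together with bookkeeping indicating against which of $b_0, \dots, b_\eta$ $(\eta < \xi)$ the approximation has so far been diagonalized; the tree is arranged so that a cofinal branch manufactures, for some $\kappa \in A$, a function $g$ with domain $\kappa$ satisfying $g \restriction \alpha \in V$ for all $\alpha < \kappa$ yet $g \notin V$, which is impossible since $\p$ has the weak $\kappa$-approximation property. Hence $T$ has no cofinal branch in $V^\p$. I then let $\dot\q$ be a $\p$-name for the usual forcing by finite partial functions from $T$ to $\omega$ which are injective on $<_T$-chains; a $\Delta$-system argument shows $\dot\q$ is $\omega_1$-c.c.\ (indeed c.c.c.), and it generically adds a specializing function for $T$. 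Finally $w$ is taken to code $\langle b_\xi : \xi < \omega_1 \rangle$, the $\p$-name $\dot T$, the set $A$, and $\lambda$.

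Now fix a regular $\chi$ with $\p, \lambda, w, \dot\q \in H(\chi)$ and a set $M \in P_{\omega_2}(H(\chi))$ with $\omega_1 \subseteq M$ and $M \prec (H(\chi), \in, \p * \dot\q, \lambda, A, w)$, and suppose there is an $M$-generic filter $K$ on $\p * \dot\q$; write $K = G * H$, where $G$ is the induced $M$-generic filter on $\p$ and $H$ the induced filter on the interpretation of $\dot\q$ (using $M$-genericity to make sense of this). Let $N := M \cap H(\lambda)$ and fix $\kappa \in A \cap M$. Since $G \cap M$ is a generic-like filter on $M \cap \p$, I can interpret the name $\dot T \in M$ by $G \cap M$ to obtain a tree $T_M$, all of whose nodes lie in $M$ and whose height is $\omega_1$ (as $\omega_1 \subseteq M$), and then use $H \cap M$ to obtain a function $h_M : T_M \to \omega$; the $M$-genericity of $H$, applied to the dense set of conditions whose domain contains a given node of $T_M$, guarantees that $h_M$ is total on $T_M$, and since the conditions of $\dot\q$ are injective on $<_T$-chains, $h_M$ is a specializing function for $T_M$. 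Crucially, $K$ and $M$ are sets of $V$, so $T_M$ and $h_M$ belong to $V$; hence $T_M$ is a tree of height $\omega_1$ carrying a specializing function, so it has no uncountable chain, and because $h_M$ remains injective on chains in any outer model, $T_M$ has no uncountable chain in any outer model either.

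It remains to show, for an arbitrary $\omega_1$-preserving generic extension $V[L]$ of $V$, that $N$ is weakly $\kappa$-guessing in $V[L]$. Suppose not: in $V[L]$ there is $f : \sup(N \cap \kappa) \to On$ with $f \restriction \alpha \in N$ for all $\alpha < \sup(N \cap \kappa)$ but with no $g \in N$ of domain $\kappa$ satisfying $g \restriction \sup(N \cap \kappa) = f$. Since $\omega_1 \subseteq N$ and $\kappa$ is regular uncountable, $N \cap \kappa$ has order type at least $\omega_1$, so the $\subseteq$-chain $\{ f \restriction \alpha : \alpha \in N \cap \kappa \}$ has an initial segment of length $\omega_1$; using the enumeration $\langle b_\xi : \xi < \omega_1 \rangle$, which is coded in $M$ (and noting that its relevant members lie in $N$ because $M$ is elementary and $\omega_1 \subseteq M$), this initial segment together with its diagonalization bookkeeping reads off as a cofinal branch through $T_M$: the assumption that no $g \in N$ of domain $\kappa$ extends $f$ is exactly what keeps the approximations from being killed, so that the corresponding nodes stay inside $T$, and each such node lies in $N \subseteq M$, hence in $T_M$. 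This contradicts the fact that $T_M$ has no uncountable chain in $V[L]$. So $N$ is weakly $\kappa$-guessing in $V[L]$; as $V[L]$ and $\kappa \in A \cap M$ were arbitrary, $N = M \cap H(\lambda)$ is indestructibly weakly $\kappa$-guessing for every $\kappa \in A \cap M$.

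The hard part is the design of $T$: the levels and the diagonalization bookkeeping must be arranged so that simultaneously (a) in $V^\p$ a cofinal branch really does produce, for some $\kappa \in A$, a function of domain $\kappa$ with all proper initial segments in $V$ but not itself in $V$ --- so that the weak $\kappa$-approximation property applies and $T$ is branchless --- and (b) every failure of weak $\kappa$-guessing of $M \cap H(\lambda)$, in any $\omega_1$-preserving extension, yields a cofinal branch through $T_M$. A delicate point is that the ordinal $\ot(N \cap \kappa)$ on which the approximation has to be read off varies with $N$ while the height of $T$ is fixed at $\omega_1$; one must also check that the failures with $\cf(\sup(N \cap \kappa)) = \omega$ (which still meet the hypothesis of weak $\kappa$-guessing, hence must be ruled out) are captured by the construction. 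These are exactly the places where the argument of \cite{vialeweiss} needs modification; the remainder is routine bookkeeping.
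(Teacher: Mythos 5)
There is a genuine gap, and it sits exactly where you place the weight of the argument: the design of the tree. You want a tree $T$, defined in $V^\p$ from an enumeration $\langle b_\xi : \xi<\omega_1\rangle$ of the ground model functions, which is \emph{branchless} in $V^\p$ (so that all of $T$ can be specialized) and yet such that, in any $\omega_1$-preserving outer model, a counterexample $f$ to weak $\kappa$-guessing of $M\cap H(\lambda)$ yields an uncountable chain through the $M$-interpretation $T_M$. These two demands are in tension and your diagonalization scheme cannot meet both. If membership in $T$ at level $i$ requires that the approximation has already been diagonalized against $b_\xi$ for all $\xi$ below some bound tending to $\omega_1$ (which is what branchlessness in $V^\p$ forces: along a branch every $b_\xi$ must eventually be ruled out \emph{on a definable schedule}), then a counterexample $f$ in the outer model need not respect any such schedule: the failure of guessing only says that for each total $b\in N$ there is \emph{some} level at which $f\restriction\beta_i\not\subseteq b$, with no control relating that level to the enumeration index of $b$; $f$ can shadow a given $b_\xi$ arbitrarily far. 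So the nodes $f\restriction\beta_i$ (with any bookkeeping you can attach from $M$) simply need not lie in $T_M$, and no chain, hence no contradiction with the specializing function, is produced. If instead you relax the schedule so that the nodes of every potential counterexample are admitted, then $T$ regains uncountable branches in $V^\p$ — indeed the natural tree of ground-model initial segments genuinely has $\omega_1$ many branches there, one for each ground model function $\kappa\to\lambda$, and this is unavoidable in the weak-approximation setting — and then $\dot\q$ cannot specialize it. Your own closing paragraph concedes that this is ``the hard part'' and does not supply the construction, so the proposal as written does not contain a proof.

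The paper resolves the tension differently: it does \emph{not} seek a branchless tree. It takes the full tree $T_\kappa$ of all ground model functions with domain $\beta^\kappa_i$ into $\lambda$ (which has exactly the $\omega_1$ many branches coming from $B_\kappa$, the ground model total functions — this is where the weak $\kappa$-approximation property is used, to show every branch union lies in $V$), fixes an \emph{injection} $g$ assigning to each $b\in B_\kappa$ a node $g(b)=b\restriction\beta^\kappa_i$ on its own branch, and splits $T$ into $T^0$ (nodes above some $g(b)$ and contained in that $b$) and $T^1=T\setminus T^0$. Only $T^1$ is branchless, and only $T^1$ is specialized by $\dot\q$. In the outer model, the specializing function does not kill the chain $\{f\restriction\beta_i\}$; it forces a tail of it into $T^0_\kappa$, and then a Fodor argument (using $\omega_1$-preservation) plus the injectivity of $g_\kappa$ shows all the witnessing $b_i$'s coincide with a single $b\in M$, which is exactly the required guess $c^*$. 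So the conclusion is extracted constructively from the branch rather than by contradiction. This $T^0/T^1$ decomposition and the Fodor/injectivity step are the essential ideas missing from your proposal.
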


\begin{proof}
We begin by fixing a generic filter $G$ on $\p$, and analyze what 
happens in $V[G]$. 
We will define a tree $(T,<_T)$ in $V[G]$ of height and size $\omega_1$ 
which has at most $\omega_1$ many uncountable branches, together with 
subtrees $T^0$ and $T^1$, where $T^1$ has no uncountable branches. 
We then let $\q$ be the standard $\omega_1$-c.c.\ forcing poset 
for adding a specializing function to the tree $T^1$. 
The tree $T$ will be the disjoint sum of trees $(T_\kappa,<_\kappa)$, 
for $\kappa \in A$.

Working in $V[G]$, 
consider $\kappa \in A$. 
Since $\p$ has the $\kappa$-approximation property, it forces that 
$\cf(\kappa) > \omega$ by Lemma 3.8. 
As $\p$ collapses $\lambda$ to have size $\omega_1$, $\p$ forces that 
$\cf(\kappa) = \omega_1$. 
Fix a sequence $\langle \beta_i^\kappa : i < \omega_1 \rangle$ 
which is increasing and cofinal in $\kappa$. 
Let $T_\kappa$ denote the set of functions in $V$ whose domain 
is equal to $\beta_i^\kappa$ for some $i < \omega_1$, and whose 
range is a subset of $\lambda$. 
For $f$ and $g$ in $T_\kappa$, let $f <_\kappa g$ if $f$ is a proper subset of $g$. 
Clearly $(T_\kappa,<_\kappa)$ is a tree. 
Since $(2^\lambda)^V$ has size $\omega_1$ in $V[G]$, the 
tree $(T_\kappa,<_\kappa)$ has height and size $\omega_1$.

Let $B_\kappa$ be the set of all functions 
$f : \kappa \to \lambda$ such that for all 
$i < \omega_1$, $f \restriction \beta^\kappa_i \in T_\kappa$. 
Note that if $f \in B_\kappa$, then since $T_\kappa \subseteq V$, 
it follows that for all $\alpha < \kappa$, $f \restriction \alpha \in V$. 
Since $\p$ has the weak $\kappa$-approximation 
property, it follows that $B_\kappa \subseteq V$. 
So in fact $B_\kappa$ is the set of all functions in $V$ from 
$\kappa$ into $\lambda$. 
Since $(2^\lambda)^V$ has size $\omega_1$, 
$B_\kappa$ has size $\omega_1$. 
Note that if $X$ is an uncountable branch of $T_\kappa$, then 
$\bigcup X \in B_\kappa$. 
It follows that $T_\kappa$ has at most $\omega_1$ 
many uncountable branches.

Let $(T,<_T)$ be the disjoint sum of such trees. 
So elements of $T$ are pairs of the form $(\kappa,g)$, where 
$\kappa \in A$ and $g \in T_\kappa$, 
and $(\kappa_0,g) <_T (\kappa_1,h)$ if $\kappa_0 = \kappa_1$ and 
$g <_{\kappa_{0}} h$. 
Since $\lambda$ has size $\omega_1$ in $V[G]$, 
$T$ is a tree of height and size $\omega_1$. 
Since any uncountable branch of $T$ obviously yields an uncountable
branch in some tree $T_\kappa$, $T$ has at most 
$\omega_1$ many uncountable branches.

Let $B = \bigcup \{ B_\kappa : \kappa \in A \}$. 
Then $B$ has size at most $\omega_1$. 
So it is straightforward to define an injective function 
$g : B \to T$ such that for all $b \in B$, 
$g(b) = (\kappa,b \restriction \beta^\kappa_i)$ for some $i < \omega_1$, 
where $b \in B_\kappa$. 
For example, enumerate $B$ as $\langle d_i : i < \omega_1 \rangle$ 
and let $g(d_i) := (\kappa,d_i \restriction \beta^\kappa_i)$, where 
$d_i \in B_\kappa$.

Now define subtrees $T^0$ and $T^1$ of $T$ by 
$$
T^0 := \{ (\kappa,t) \in T : 
\exists b \in B_\kappa \ ( \ g(b) <_T (\kappa,t), \ t \subseteq b \ ) \}
$$
and 
$$
T^1 := T \setminus T^0.
$$
Then $T^1$ is a tree of height and size less than or equal to 
$\omega_1$.

We claim that $T^1$ has no uncountable branch. 
Suppose for a contradiction that $X$ is an uncountable branch of $T^1$. 
Then for some $\kappa \in A$, 
$$
b := \bigcup \{ f : (\kappa,f) \in X \}
$$
is in $B_\kappa$. 
Fix $i < \omega_1$ such that $g(b) = (\kappa,b \restriction \beta^\kappa_i)$. 
Since $X$ is uncountable, there is $j > i$ such that 
$(\kappa,b \restriction \beta^\kappa_j)$ is in $X$. 
Then 
$$
g(b) = (\kappa,b \restriction \beta^\kappa_i) 
<_T (\kappa,b \restriction \beta^\kappa_{j})
$$
and 
$b \restriction \beta^\kappa_{j} \subseteq b$. 
But then by the definition of $T^0$, 
$b \restriction \beta^\kappa_j \in T^0$. 
This contradicts that $X \subseteq T^1$.

In summary, $T^1$ is a tree with height and size less than or equal to $\omega_1$ 
which has no uncountable branches. 
Let $\q$ be the standard $\omega_1$-c.c.\ forcing poset which adds 
a specializing function to $T^1$.

\bigskip

Now let us go back and consider $\kappa \in A$ in $V[G]$. 
Define $g_\kappa : B_\kappa \to T_\kappa$ by letting $g_\kappa(b)$ be the unique $t$ 
such that $g(b) = (\kappa,t)$. 
Then $g_\kappa$ is injective, and 
$g_\kappa(b)$ is equal to $b \restriction \beta^\kappa_i$ for some $i < \omega_1$.

Define subtrees $T_\kappa^0$ and $T_\kappa^1$ of $T_\kappa$ by 
$$
T_\kappa^0 := \{ t : (\kappa,t) \in T^0 \}
$$
and 
$$
T_\kappa^1 := \{ t : (\kappa,t) \in T^1 \}.
$$
It is easy to check that 
$$
T_\kappa^0 = \{ t \in T_\kappa : \exists b \in B_\kappa \ 
( \ g_\kappa(b) <_\kappa t \subseteq b \ ) \},
$$
and 
$$
T_\kappa^1 = T_\kappa \setminus T_\kappa^0.
$$
Note that if $\dot h$ is a $\q$-name for the specializing function on $T^1$ which 
was added by $\q$, then $\q$ forces that 
the function $\dot h_\kappa : T_\kappa^1 \to \omega$ defined 
by $\dot h_\kappa(t) = \dot h(\kappa,t)$ is a specializing function on $T_\kappa^1$.

\bigskip

This completes our analysis of the forcing extension $V[G]$. 
Since $G$ was arbitrary, we can fix $\p$-names 
$\dot T$, $\dot{<_T}$, $\dot B$, $\dot g$, $\dot T^0$, 
$\dot T^1$, and $\dot \q$ such that $\p$ forces that these names 
satisfy the definitions which we made of these objects in $V^\p$ above. 
Moreover, we can fix a function which associates to each $\kappa \in A$ 
a set of $\p$-names 
$\langle \dot \beta^\kappa_i : i < \omega_1 \rangle$, 
$\dot T_\kappa$, $\dot{<_\kappa}$, $\dot B_\kappa$, $\dot g_\kappa$, $\dot T_\kappa^0$, 
and $\dot T_\kappa^1$ such that $\p$ forces that these 
objects satisfy the definitions which we made of them in $V^\p$ above. 
Finally, let $\dot h$ be a $\p * \dot \q$-name for the specializing function 
on $\dot T^1$ which is added by $\dot \q$, and fix 
a function which associates to each $\kappa \in A$ 
a $\p * \dot \q$-name for 
the function $\dot h_\kappa$ which specializes $\dot T_\kappa^1$. 
Now let $w$ be the set consisting of these finitely many names and functions.

\bigskip

Let $\chi$ be a regular cardinal such that 
$\p$, $\lambda$, $w$, and $\dot \q$ are in $H(\chi)$. 
Let $M \in P_{\omega_2}(H(\chi))$ be a set such that 
$\omega_1 \subseteq M$, $M \prec (H(\chi),\in,\p * \dot \q,\lambda,A,w)$, 
and there exists an $M$-generic filter $J$ on $\p * \dot \q$. 
We will prove that $M \cap H(\lambda)$ 
is indestructibly weakly $\kappa$-guessing for all $\kappa \in A \cap M$. 
Let $G := \{ p : \exists \dot q \ (p,\dot q) \in J \}$. 
It is easy to check that $G$ is an $M$-generic filter on $\p$.

Consider $\kappa \in A \cap M$. 
Let $\theta := \sup(M \cap \kappa)$. 
Suppose that $c : \theta \to \lambda$ is a function in an $\omega_1$-preserving 
generic extension $W$ 
such that for cofinally many 
$\alpha < \theta$, $c \restriction \alpha \in M \cap H(\lambda)$. 
We will prove that for some function $c^* : \kappa \to \lambda$ 
in $M \cap H(\lambda)$, $c^* \restriction \theta = c$. 
Note that since $\lambda$ is regular, any function mapping from 
$\kappa$ into $\lambda$ in $M$ is in $H(\lambda)$. 
So it suffices to find such a function $c^*$ in $M$.

For each $i < \omega_1$, let $\beta_i^\kappa$ be the unique ordinal such that 
for some $q \in G$, $q$ forces that $\dot \beta_i^\kappa = \check \beta_i^\kappa$. 
Since $G$ is a filter which is $M$-generic, it is straightforward to 
check that the sequence $\langle \beta_i^\kappa : i < \omega_1 \rangle$ 
is increasing and cofinal in $M \cap \kappa$.
For simplicity in notation, let us write $\beta_i := \beta_i^\kappa$ for all 
$i < \omega_1$.

Given any $\p$-name $\dot a$ in $M$, we can interpret $\dot a$ by $G$ 
as the 
set of $x \in M$ such that for some $p \in G \cap M$, 
$p \Vdash \check x \in \dot a$. 
This gives us objects 
$T_\kappa$, $<_\kappa$, $B_\kappa$, $g_\kappa$, $T_\kappa^0$, 
and $T_\kappa^1$ which 
interpret the $\p$-names $\dot T_\kappa$, $\dot{<_\kappa}$, $\dot B_\kappa$, 
$\dot g_\kappa$, $\dot T^0_\kappa$, and $\dot T^1_\kappa$. 
Similarly, interpret the $\p * \dot \q$-name $\dot h_\kappa$ as $h_\kappa$. 
Using the $M$-genericity of $G$ and $J$ and arguments similar to those of 
Lemma 5.2, the following facts can be easily checked:
\begin{enumerate}
\item $(T_\kappa,<_\kappa)$ is the tree of all functions in $M$ whose domain 
is equal to $\beta_i$ for some $i < \omega_1$, and mapping into $\lambda$, 
ordered by 
$f <_\kappa g$ if $f$ is a proper subset of $g$;
\item $B_\kappa$ is the set of all functions in $M$ of the form 
$b : \kappa \to \lambda$;
\item $g_\kappa : B_\kappa \to T_\kappa$ is injective and $g_\kappa(b) \subseteq b$ 
for all $b \in B_\kappa$;
\item $T_\kappa^0$ is the set of all $t \in T_\kappa$ such that for some 
$b \in B_\kappa$, $g_\kappa(b) <_\kappa t \subseteq b$;
\item $T_\kappa^1 = T_\kappa \setminus T_\kappa^0$;
\item $h_\kappa : T_\kappa^1 \to \omega$ is a function such that whenever 
$f <_\kappa g$ are in $T_\kappa^1$, then $h_\kappa(f) \ne h_\kappa(g)$.
\end{enumerate}

Recall that $c : \theta \to \lambda$ is a function in $W$ 
such that for cofinally many 
$\alpha < \theta$, $c \restriction \alpha \in M$. 
In particular, for all $i < \omega_1$, 
$c \restriction \beta_i \in T_\kappa$. 
Hence, the set $X := \{ c \restriction \beta_i : i < \omega_1 \}$ 
is an uncountable branch of $T_\kappa$ in $W$. 
Since the function $h_\kappa$ is injective on chains, 
it is injective on $X \cap T_\kappa^1$. 
As $h_\kappa$ maps into $\omega$, there must exist 
$\gamma < \omega_1$ such that for all 
$\gamma \le i < \omega_1$, $c \restriction \beta_i$ is in $T_\kappa^0$.

Now $T_\kappa^0$ is the set of all $t \in T_\kappa$ such that for some 
$b \in B_\kappa$, $g_\kappa(b) <_\kappa t \subseteq b$. 
So for all $\gamma \le i < \omega_1$, we can fix 
$b_i \in B_\kappa$ such that 
$$
g_\kappa(b_i) <_\kappa c \restriction \beta_i \subseteq b_i.
$$
In particular, $\dom(g_\kappa(b_i)) < \beta_i$, so fix 
$\zeta_i < i$ such that 
$\dom(g_\kappa(b_i)) = \beta_{\zeta_i}$. 
By Fodor's lemma applied in $W$, fix a stationary set 
$S \subseteq \omega_1 \setminus \gamma$ in $W$ 
and $\zeta < \omega_1$ such that 
for all $i \in S$, $\zeta_i = \zeta$, and hence 
$\dom(g_\kappa(b_i)) = \beta_\zeta$.

It immediately follows that for all $i \in S$, 
$$
g_\kappa(b_i) = b_i \restriction \beta_{\zeta_i} = 
b_i \restriction \beta_\zeta = 
(c \restriction \beta_i) \restriction \beta_\zeta = c \restriction \beta_\zeta.
$$
Thus, for all $i < j$ in $S$, 
$$
g_\kappa(b_i) = c \restriction \beta_\zeta = g_\kappa(b_j).
$$
But $g_\kappa$ is injective. 
Hence, for all $i < j$ in $S$, $b_i = b_j$. 
Let $b := b_i$ for some (any) $i \in S$. 
Then for all $i \in S$, $c \restriction \beta_i \subseteq b$. 
Since $S$ is cofinal in $\omega_1$, it follows that 
$c \subseteq b$. 
So $b \restriction \theta = c$. 
Since $b \in M$, we are done.
\end{proof}

\section{Namba forcing}

In order to apply the results from the previous section, we will need 
to find a forcing poset which satisfies instances of the weak approximation 
property and yet 
adds a countable set of ordinals which is 
not covered by any countable set in the ground model. 
In this section we will define a Namba forcing which satisfies 
these requirements.

Let us introduce some notation which we will use in this section. 
By a \emph{tree of finite sequences} we mean 
any set of finite sequences which is closed under initial segments. 
For finite sequences $\eta$ and $\nu$, 
we write $\eta \unlhd \nu$ to express that $\eta$ is an initial segment 
of $\nu$, and $\eta \lhd \nu$ to express that $\eta$ is a proper initial 
segment of $\nu$. 
We say that $\eta$ and $\nu$ are \emph{comparable} if either 
$\eta \unlhd \nu$ or $\nu \unlhd \eta$; otherwise, they are incomparable.

Let $T$ be a tree of finite sequences. 
The elements of $T$ are called \emph{nodes of $T$}. 
For $\eta \in T$, 
we write $\suc_T(\eta)$ for the set $\{ x : \eta \con x \in T \}$. 
A node $\eta$ of $T$ is a \emph{splitting node} if $|\suc_T(\eta)| > 1$. 
We let $T_\eta$ denote the set of nodes in $T$ which are comparable 
with $\eta$. 
Note that $T_\eta$ is a tree of finite sequences and is a subset of $T$.

\bigskip

For the remainder of the section, 
fix a sequence $\langle \kappa_n : n < \omega \rangle$ of 
regular cardinals greater than or equal to $\omega_2$. 
Note that we are not assuming anything about how the $\kappa_n$'s 
are ordered. 
For each $n < \omega$, we fix a $\kappa_n$-complete uniform ideal 
$I_n$ on $\kappa_n$.

\begin{definition}
Let $\p$ be the forcing poset whose conditions are trees 
of finite sequences $S$ satisfying:
\begin{enumerate}
\item there is $\eta^* \in S$ such that for all $\nu \in S$, $\nu$ and $\eta^*$ 
are comparable;
\item for all $\nu \in S$, if $\eta^* \unlhd \nu$ then 
$\nu$ is a splitting node and 
$\suc_{S}(\nu) \in I_{\lh(\nu)}^+$.\footnote{Recall that 
for any ideal $I$ on a set 
$X$, $I^+$ denotes the collection of subsets of $X$ which are not in $I$.} 
\end{enumerate}
Let $T \le S$ if $T \subseteq S$.
\end{definition}

The node $\eta^*$ described in Definition 6.1 is obviously unique. 
We call this node the \emph{stem of $S$}. 
Note that for all $\nu \in S$, if $\nu \lhd \eta^*$, 
then $\nu$ is not a splitting node.

We introduce reflexive and transitive 
relations $\le^*$ and $\le_n$, for each $n < \omega$, on $\p$. 
Define $T \le^* S$ if $T \le S$ and $S$ and $T$ have the same stem. 
For $n < \omega$, define $T \le_n S$ if $T \le S$ and $S$ and $T$ 
have the same nodes of length $n$. 
Note that $m \le n$ and $T \le_n S$ imply that $T \le_m S$.

A sequence $\langle T_n : n < \omega \rangle$ is a \emph{fusion sequence} 
if for all $n < \omega$, $T_{n+1} \le_n T_n$ and 
$T_{n+1} \le^* T_n$. 
If $\langle T_n : n < \omega \rangle$ is a fusion sequence, then it is easy to 
check that 
$$
\bigcap_{n < \omega} T_n = \bigcup_{n < \omega} \{ \nu \in T_n : 
\lh(\nu) = n \}.
$$

\begin{lemma}
Suppose that $\langle T_n : n < \omega \rangle$ is a fusion sequence. 
Let $T := \bigcap \{ T_n : n < \omega \}$. 
Then $T \in \p$, and for all $n < \omega$, 
$T \le^* T_n$ and $T \le_n T_n$.
\end{lemma}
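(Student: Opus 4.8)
The plan is to first pin down an explicit description of the set of nodes of $T$, then check the two clauses of Definition 6.1, and finally read off the three refinement relations. The convenient tool is the identity displayed just before the lemma, namely that for a fusion sequence $\langle T_n : n < \omega \rangle$ one has
$$
T \;=\; \bigcap_{n<\omega} T_n \;=\; \bigcup_{n<\omega}\{\nu \in T_n : \lh(\nu) = n\}.
$$
So I would begin by proving this. The inclusion ``$\subseteq$'' is immediate: if $\nu \in T$ and $\lh(\nu) = n$, then $\nu \in T_n$. For ``$\supseteq$'', suppose $\nu \in T_n$ with $\lh(\nu) = n$. Since $T_{j+1} \le_j T_j$ implies $T_{j+1} \subseteq T_j$, we get $\nu \in T_j$ for all $j \le n$. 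For $j > n$, an easy induction on $j$ shows that $T_j$ and $T_n$ have the same nodes of length $n$: the inductive step uses $T_{j+1} \le_j T_j$ together with the observation (noted in the text) that $n \le j$ and $T_{j+1} \le_j T_j$ imply $T_{j+1} \le_n T_j$. Hence $\nu \in T_j$ for all $j$, so $\nu \in T$. An immediate consequence, which I will use repeatedly, is that for every $n$ the nodes of $T$ of length exactly $n$ are precisely the nodes of $T_n$ of length $n$.

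Next I would verify $T \in \p$. Closure of $T$ under initial segments is clear since each $T_n$ is a tree of finite sequences. Let $\eta^*$ denote the common stem of the $T_n$'s, which is well defined because $T_{n+1} \le^* T_n$ for every $n$. Then $\eta^* \in T_n$ for all $n$, so $\eta^* \in T$; and for any $\nu \in T$, taking $n = \lh(\nu)$ we have $\nu \in T_n$, so $\nu$ is comparable with the stem $\eta^*$ of $T_n$. This gives clause (1), with stem $\eta^*$. For clause (2), fix $\nu \in T$ with $\eta^* \unlhd \nu$ and put $n = \lh(\nu)$. A one-step extension $\nu \con x$ has length $n+1$, and by the last sentence of the previous paragraph $\nu \con x \in T$ iff $\nu \con x \in T_{n+1}$; hence $\suc_T(\nu) = \suc_{T_{n+1}}(\nu)$. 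Now $\nu$ is a node of $T_n$ of length $n$ and $T_{n+1} \le_n T_n$, so $\nu \in T_{n+1}$, and the stem of $T_{n+1}$ is $\eta^* \unlhd \nu$; applying clause (2) of Definition 6.1 to the condition $T_{n+1}$ yields $\suc_{T_{n+1}}(\nu) \in I_n^+$, and hence $\suc_T(\nu) \in I_{\lh(\nu)}^+$. Since any $I_n$-positive set has size $\kappa_n > 1$, the node $\nu$ is also splitting in $T$. Thus $T$ satisfies both clauses and $T \in \p$.

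Finally, the refinement relations: for each $n$ we have $T = \bigcap_m T_m \subseteq T_n$, so $T \le T_n$; since $T$ and $T_n$ share the stem $\eta^*$, in fact $T \le^* T_n$; and since the nodes of $T$ of length $n$ coincide with those of $T_n$, we have $T \le_n T_n$. The only part that needs any real care is the identification $\suc_T(\nu) = \suc_{T_{n+1}}(\nu)$, and more generally the bookkeeping about which $T_m$'s nodes of a given length survive into the intersection; everything else is a direct unwinding of the definitions, so once the displayed fusion identity is established the remainder is routine.
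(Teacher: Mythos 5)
Your proof is correct and proceeds exactly along the lines the paper intends: the paper's proof of this lemma is simply ``Straightforward,'' and your argument fills in the routine details via the displayed fusion identity, the verification of the two clauses of Definition 6.1 for $T$ with stem $\eta^*$, and the observation that $T$ and $T_n$ agree on nodes of length $n$. No gaps; the key bookkeeping step $\suc_T(\nu) = \suc_{T_{n+1}}(\nu)$ is handled correctly.
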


\begin{proof}
Straightforward.
\end{proof}

The next lemma describes the process which we will use 
for constructing fusion sequences.

\begin{lemma}
Suppose that $\langle T_n : n < \omega \rangle$ is a sequence of 
conditions, where $T_0$ has a stem of length $m$, satisfying:
\begin{enumerate}
\item $T_0 = T_k$ for all $k \le m$;
\item for all $n \ge m$, for all $\nu \in T_n$ of length $n$, there is a 
set $\suc(\nu) \subseteq \suc_{T_n}(\nu)$ in $I_n^+$ such that 
$$
T_{n+1} = \bigcup \{ U(\nu,\xi) : \nu \in T_n, \ \lh(\nu) = n, \ 
\xi \in \suc(\nu) \},
$$
where 
$U(\nu,\xi) \le^* (T_n)_{\nu \con \xi}$ for all $\nu$ and $\xi$.
\end{enumerate}
Then $\langle T_n : n < \omega \rangle$ is a fusion sequence. 
Moreover, letting $T := \bigcap \{ T_n : n < \omega \}$, 
we have that for all $n < \omega$, 
the set 
$$
\{ U(\nu,\xi) : \nu \in T_n, \ \lh(\nu) = n, \ 
\xi \in \suc(\nu) \}
$$
is an antichain which is predense below $T$.
\end{lemma}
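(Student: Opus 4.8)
The plan is: first verify the two fusion relations $T_{n+1}\le_n T_n$ and $T_{n+1}\le^* T_n$ by induction on $n$; then invoke Lemma 6.2 to obtain $T\in\p$ together with $T\le^* T_n$ and $T\le_n T_n$; and finally establish the antichain and predensity assertions, the key structural point being that at level $n+1$ the tree $T_{n+1}$ is partitioned by the stems of the conditions $U(\nu,\xi)$. Two elementary facts about conditions will be used throughout, and I would record them first: (i) every node of a condition has extensions of every length; and (ii) if $S\in\p$, $\rho\in S$, and $\stem(S)\unlhd\rho$, then $S_\rho$ is a condition with stem $\rho$ and $S_\rho\le S$.

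\emph{Fusion relations.} The case $n<m$ is trivial, since $T_{n+1}=T_0=T_n$ by hypothesis (1). For $n\ge m$ I argue by induction on $n$, maintaining that $\stem(T_n)=\eta^*$ (the length-$m$ stem of $T_0$), which holds at $n=m$ because $T_m=T_0$. The inclusion $T_{n+1}\subseteq T_n$ is immediate from $U(\nu,\xi)\le^*(T_n)_{\nu\con\xi}\le T_n$. The nodes of length $n$ of $T_{n+1}$ are exactly those of $T_n$: each length-$n$ node $\nu$ of $T_n$ lies in $U(\nu,\xi)$ for any $\xi\in\suc(\nu)$ (which is nonempty, as $\suc(\nu)\in I_n^+$), and conversely any length-$n$ node of $U(\nu,\xi)$ equals the length-$n$ initial segment $\nu$ of the stem $\nu\con\xi$ of $U(\nu,\xi)$. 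Since $T_n$ and $T_{n+1}$ are initial-segment-closed trees in which, by (i), every node extends to length $n$, they agree on all nodes of length $\le n$. When $n>m$ this shows that $\eta^*$ has the same successor set, which lies in $I_m^+$, in $T_{n+1}$ as in $T_n$; when $n=m$ one computes $\suc_{T_{m+1}}(\eta^*)=\suc(\eta^*)\in I_m^+$ directly. Either way $\eta^*$ is a splitting node of $T_{n+1}$ comparable with every node of $T_{n+1}$, hence $\stem(T_{n+1})=\eta^*$; so $T_{n+1}\le^* T_n$ and $T_{n+1}\le_n T_n$. Thus $\langle T_n:n<\omega\rangle$ is a fusion sequence, and Lemma 6.2 gives $T\in\p$ with $T\le^* T_n$ and $T\le_n T_n$ for all $n$.

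\emph{Antichain and predensity.} Write $\mathcal A_n$ for the displayed family; for $n<m$ it reduces to $\{T_0\}$ and the claims are trivial, so assume $n\ge m$. Distinct members of $\mathcal A_n$ are $U(\nu,\xi)$ and $U(\nu',\xi')$ with $(\nu,\xi)\ne(\nu',\xi')$, and their stems $\nu\con\xi$, $\nu'\con\xi'$ are distinct nodes of length $n+1$, hence incomparable; a common extension would, by (i), contain a node of length $\ge n+1$ extending both stems, which is absurd, so $\mathcal A_n$ is an antichain. For predensity below $T$, let $W\le T$. Since $T\le^* T_{n+1}$ and $T\le_{n+1}T_{n+1}$, the nodes of length $n+1$ of $T$ are precisely the $\nu\con\xi$ with $\nu\in T_n$ of length $n$ and $\xi\in\suc(\nu)$. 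Pick $\rho\in W$ of length $\ge n+1$ (using (i)) and set $\rho':=\rho\restriction(n+1)\in W\subseteq T\subseteq T_{n+1}$; then $\rho'=\nu\con\xi$ for a unique such pair. The crucial observation is that any node $x$ of $T_{n+1}$ of length $\ge n+1$ that extends $\rho'$ lies in $U(\nu,\xi)$: such $x$ lies in some $U(\mu,\zeta)$, and since $\lh(x)\ge n+1$ it extends the length-$(n+1)$ stem $\mu\con\zeta$, forcing $\mu\con\zeta=\rho'=\nu\con\xi$. Now $\rho'$ and $\stem(W)$ are comparable, both lying in $W$. If $\rho'$ is a proper initial segment of $\stem(W)$, then $\stem(W)$ itself lies in $U(\nu,\xi)$ by the observation, so every node of $W$ is either an initial segment of $\stem(W)$, hence in $U(\nu,\xi)$, or a proper extension of $\stem(W)$, hence in $U(\nu,\xi)$ by the observation again; thus $W\le U(\nu,\xi)$. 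If instead $\stem(W)\unlhd\rho'$, then by (ii) $W_{\rho'}$ is a condition with stem $\rho'$ and $W_{\rho'}\le W$, and since $U(\nu,\xi)$ contains all initial segments of $\rho'$ and, by the observation, all nodes of $W_{\rho'}$ of length $\ge n+1$, we get $W_{\rho'}\le U(\nu,\xi)$. Either way $W$ is compatible with $U(\nu,\xi)\in\mathcal A_n$, so $\mathcal A_n$ is predense below $T$.

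\emph{Main obstacle.} The argument is not conceptually deep, but it rests on careful bookkeeping about stems — namely, checking that the auxiliary trees $(T_n)_{\nu\con\xi}$, $S_\rho$, $W_{\rho'}$, and the like really are conditions with the stated stems, and that passing to such restrictions behaves well with respect to $\le$ and $\le^*$; facts (i) and (ii) isolate most of this. Once they are available, the single idea driving both remaining parts is that the length-$(n+1)$ nodes of $T_{n+1}$ are exactly the stems $\nu\con\xi$, so that every sufficiently long node of $T$ extends exactly one of them and thereby points to a unique $U(\nu,\xi)$.
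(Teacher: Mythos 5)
Your proof is correct and follows essentially the same route as the paper's (which simply declares the fusion verification straightforward and, for predensity, notes that for $V \le T$, any $\nu \in V$ of length $n$ and $\xi \in \suc_V(\nu)$ give $V_{\nu \con \xi}$ below both $V$ and $U(\nu,\xi)$ — the single restriction $V_{\nu\con\xi}$ covering both of your stem cases at once). Your version just supplies the bookkeeping about stems and the observation that the length-$(n+1)$ nodes of $T_{n+1}$ are exactly the stems $\nu\con\xi$, which is the same underlying idea.
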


\begin{proof}
The proof is straightforward. 
For the second part, note that for a given $\nu$ in $T_n$ of length $n$ 
and distinct $\xi$ and $\gamma$ in $\suc(\nu)$, 
$U(\nu,\xi) \cap U(\nu,\gamma)$ is the set of initial segments of $\nu$. 
Thus, $U(\nu,\xi)$ and $U(\nu,\gamma)$ are incompatible. 
Suppose that $V \le T$. 
Then choosing any $\nu \in V$ with $\lh(\nu) = n$ and $\xi \in \suc_V(\nu)$, 
it is easy to see that $V_{\nu \con \xi}$ is below both $V$ and $U(\nu,\xi)$.
\end{proof}

The next four results follow from standard Namba forcing type arguments; 
also see \cite{namba} and \cite[Chapter XI]{shelah}.

\begin{lemma}
Let $D$ be a dense open subset of $\p$. 
Then for each $S \in \p$, there is $W \le^* S$ and $n < \omega$ which is 
greater than or equal to the length of the stem of $S$ 
such that for any $\nu \in W$ with $\lh(\nu) = n$, $W_\nu \in D$. 
\end{lemma}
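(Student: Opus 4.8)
The plan is to use a fusion argument, repeatedly ``thinning'' the tree $S$ so that more and more of its nodes are either already in $D$ or lie outside a set of ``potential future decisions.'' The key point is that Namba-style fusion (via Lemma 6.5) lets us control which nodes of a fixed length $n$ appear at each stage, and the ideal $I_n$ being $\kappa_n$-complete forces the thinning at each splitting node to be done without deleting everything — i.e., we stay in $I_n^+$. The statement we want is a ``one-shot'' pigeonhole-flavored result: we don't need to iterate over all levels, just find a single level $n$ at which the entire condition below every node of that length has entered $D$.

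First I would set up the fusion. Let $m$ be the length of the stem of $S$, and build $\langle T_k : k < \omega\rangle$ as in Lemma 6.5: put $T_k = S$ for $k \le m$, and at each stage $n \ge m$, for each $\nu \in T_n$ of length $n$, do the following. Look at the set of $\xi \in \suc_{T_n}(\nu)$ such that $(T_n)_{\nu \con \xi}$ has an extension (with the same stem) lying in $D$ — call such $\xi$ ``good.'' The crucial dichotomy: since $D$ is dense, for \emph{every} $\xi \in \suc_{T_n}(\nu)$, the condition $(T_n)_{\nu \con \xi}$ has \emph{some} extension in $D$; but we want the extension to have stem still of length $n+1$ (i.e.\ $\le^*$-extension). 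This is not automatic — the dense-open set $D$ might only be reached by lengthening the stem. So at node $\nu$ I would instead reason: for each $\xi$, pick \emph{any} $V \le (T_n)_{\nu\con\xi}$ with $V \in D$; its stem $\sigma_\xi$ extends $\nu \con \xi$. The map $\xi \mapsto \sigma_\xi$ records where the ``real'' decision happens. Then, since $|\suc_{T_n}(\nu)| \in I_n^+$ and $I_n$ is $\kappa_n$-complete, I would shrink $\suc_{T_n}(\nu)$ to an $I_n^+$ subset $\suc(\nu)$ that is homogeneous for the ``level at which $\sigma_\xi$ first leaves the spine.'' The standard Namba argument then shows one can arrange, after the fusion, that there is a uniform bound.

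Actually, the cleanest route: build the fusion so that $T_{n+1} = \bigcup\{U(\nu,\xi)\}$ where each $U(\nu,\xi) \le^* (T_n)_{\nu\con\xi}$ is chosen to be \emph{some element of $D$ if one exists with that stem}, and otherwise just $(T_n)_{\nu\con\xi}$ itself. Let $T := \bigcap_n T_n$. Now suppose toward a contradiction that for every $n$ there is a node $\nu_n \in T$ of length $n$ with $T_{\nu_n} \notin D$ — equivalently, $T_{\nu_n}$ has \emph{no} $\le^*$-extension in $D$ at any later stage. One then traces a branch $b$ through $T$ all of whose initial-segment-subtrees fail to meet $D$; but $D$ is dense, so some condition $V \le T$ lies in $D$, and $V$ has a stem $\sigma \in T$; then $T_\sigma \le V$... this needs care since $T_\sigma$ need not be below $V$. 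The honest argument uses that $D$ is dense \emph{open}: from $V \in D$ with $V \le T$, pick $\sigma = \stem(V)$ and note $V \le^* $ (something), and by openness any $W \le V$ is in $D$; in particular $T_{\sigma'} \le V$ for $\sigma'$ a long enough node of $V$ inside $T$, so $T_{\sigma'} \in D$, contradicting the failure at level $\lh(\sigma')$. This is where I'd need to be careful that $T_{\sigma'}$ really is a condition below $V$ — true because $\sigma'$ is a node of $V$ and $V \subseteq T$.

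\medskip

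\noindent\textbf{The main obstacle} I anticipate is exactly the mismatch between $\le$ and $\le^*$: $D$ being dense only guarantees extensions that may push the stem down arbitrarily far, whereas fusion only controls $\le^*$-extensions. The resolution is the $\kappa_n$-completeness of $I_n$: at each splitting node one has $\kappa_n$-many (really, an $I_n^+$-set of) successors, and the ``depth to which the stem must be pushed'' is an ordinal-valued (or $\omega$-valued, after one normalization) function on this large set, so by completeness it is bounded (or constant) on an $I_n^+$-subset. Iterating this bound along the fusion and using that a branch through $T$ would have to avoid $D$ at every level — contradicting density — gives the uniform $n$. I would present the combinatorial heart as the statement: at each node of length $n$ in the fusion, one may shrink the successor set in $I_n^+$ so that either all successors already give conditions in $D$, or the stem-pushing depth is uniformly bounded; then a König-type argument on $T$ extracts the single level $n$.
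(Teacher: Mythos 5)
Your general strategy (fusion plus $\kappa_n$-completeness of $I_n$ to tame the fact that members of $D$ may have much longer stems) is the right flavor, and you correctly identify the $\le$ versus $\le^*$ mismatch as the main obstacle, but the concluding argument has two genuine flaws. First, the inclusion you rely on is backwards: from $V \le T$, i.e.\ $V \subseteq T$, you only get $V_{\sigma'} \subseteq T_{\sigma'}$, not $T_{\sigma'} \subseteq V$; the fused tree $T$ will in general branch above $\sigma'$ into nodes that $V$ has discarded, so $T_{\sigma'}$ need not be a condition below $V$, and openness of $D$ gives you nothing about $T_{\sigma'}$. Second, even if you could conclude $T_{\sigma'} \in D$ for that one node $\sigma'$, this does not contradict your hypothesis ``for every $n$ there is some $\nu_n \in T$ of length $n$ with $T_{\nu_n} \notin D$'': the lemma needs \emph{all} nodes of a single length $n$ to have $W_\nu \in D$ (while keeping the stem of $S$), and exhibiting one good node at one level is compatible with the existence of bad nodes at every level. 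Passing to $T_{\sigma'}$ is also not an option, since that changes the stem and destroys $\le^*$. Your auxiliary idea of homogenizing the ``stem-pushing depth'' on an $I_n^+$-set is the right ingredient, but it is never actually fed into the final argument, and the fallback clause ``otherwise just $(T_n)_{\nu\con\xi}$ itself'' can in principle fire at every node, leaving you with a fusion that never interacted with $D$ at all.

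The paper closes exactly this gap by running the argument in the contrapositive with an auxiliary notion: call $T$ \emph{correct for $D$} if some $W \le^* T$ and some level $n$ (at least the stem length) have $W_\nu \in D$ for all $\nu \in W$ of length $n$. The key Claim is that if $T$ is \emph{not} correct, then the set of $\gamma \in \suc_T(\eta)$ (where $\eta$ is the stem) with $T_{\eta\con\gamma}$ correct is in $I_{\lh(\eta)}$; this is where $\kappa_{\lh(\eta)}$-completeness is used, to stabilize the witnessing level $n_\gamma$ on an $I^+$-set of successors and glue the witnesses $U(\gamma)$ into a single $\le^*$-extension. One then assumes $S$ is not correct and fuses so that \emph{every} node $\nu$ of length $n$ in $T_n$ has $(T_n)_\nu$ not correct. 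Finally, density gives $W \le T$ with $W \in D$; if $\nu$ is the stem of $W$ and $m = \lh(\nu)$, then $W \subseteq (T_m)_\nu$ with the same stem, so $W \le^* (T_m)_\nu$ and $W$ itself witnesses that $(T_m)_\nu$ is correct (take $n = m$), contradicting the inductive hypothesis. Note that here the inclusion goes in the harmless direction ($W$ below the subtree of the fusion), which is precisely what your version was missing.
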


\begin{proof}
For any condition $S$, let us say that $S$ is \emph{correct for $D$} if 
there exists $W \le^* S$ and $n < \omega$ which is 
greater than or equal to the length of the stem 
of $S$ such that for all $\nu \in W$ with 
$\lh(\nu) = n$, $W_\nu \in D$. 
Our goal is to show that every condition is correct for $D$. 
Note that if $W \le^* T$ and $W \in D$, then $T$ is correct 
for $D$, as witnessed by the number $n$ which is the length of 
the stem of $T$.

\bigskip

\noindent \emph{Claim}: If $T$ is not correct for $D$ 
and $\eta$ is the stem of $T$, 
then the set of $\gamma \in \suc_T(\eta)$ such that 
$T_{\eta \con \gamma}$ is correct for $D$ 
is in $I_{\lh(\eta)}$.

\bigskip

Let $A$ be the set of $\gamma \in \suc_T(\eta)$ 
such that $T_{\eta \con \gamma}$ is correct for $D$, 
and suppose for a contradiction that $A \notin I_{\lh(\eta)}$. 
Then $A \in I_{\lh(\eta)}^+$. 

For each $\gamma \in A$, 
fix $U(\gamma) \le^* T_{\eta \con \gamma}$ 
and $n_\gamma < \omega$ greater than or equal to $\lh(\eta)+1$ 
such that for any $\nu$ in $U(\gamma)$ with $\lh(\nu) = n_\gamma$, 
$U(\gamma)_\nu$ is in $D$. 
Since $I_{\lh(\eta)}$ is $\kappa_{\lh(\eta)}$-complete, 
we can fix $n < \omega$ 
such that the set $A_n := \{ \gamma \in A : n_\gamma = n \}$ is in 
$I_{\lh(\eta)}^+$. 
Now define $U := \bigcup \{ U(\gamma) : \gamma \in A_n \}$. 
Then $U$ is a condition and $U \le^* T$.

Note that $n$ is greater than the length of the stem $\eta$ of $U$. 
We claim that if $\nu \in U$ and $\lh(\nu) = n$, then 
$U_\nu \in D$. 
This implies that $T$ is correct for $D$, which is a contradiction. 
Suppose that $\nu \in U$ and $\lh(\nu) = n$. 
Fix $\gamma \in A_n$ such that $\nu \in U(\gamma)$. 
Then $U(\gamma)_\nu \in D$. 
But since 
$n$ is greater than the length of the stem of $U$, 
$U(\gamma)_\nu = U_\nu$, so $U_\nu \in D$. 
This completes the proof of the claim.

\bigskip

Now let $S$ be a condition, and we will prove that $S$ 
is correct for $D$. 
Suppose for a contradiction that it is not. 
We define a fusion sequence $\langle T_n : n < \omega \rangle$. 
Our inductive hypothesis is that if $\nu \in T_n$ and $\lh(\nu) = n$, 
then $(T_n)_\nu$ is not correct for $D$. 
Let $\eta$ be the stem of $S$, and let $n^* := \lh(\eta)$.

Define $T_m := S$ for all $m \le n^*$. 
Then for any $\nu \in T_m$ with $\lh(\nu) = m$, 
$\nu \unlhd \eta$, so $(T_m)_\nu = S$, which is not correct for $D$. 
Thus, the inductive hypothesis holds.

Let $m \ge n^*$, and assume that $T_m$ is defined and satisfies 
the inductive hypothesis. 
Consider any $\nu \in T_m$ with $\lh(\nu) = m$. 
Then by the inductive hypothesis, $(T_m)_\nu$ is not correct for $D$. 
Moreover, since the stem of $T_m$ is equal to $\eta$, which has length 
$n^*$, and $m \ge n^*$, it follows that $\nu$ is the stem 
of $(T_m)_\nu$. 
So by the claim, letting $B_\nu$ be the set of 
$\gamma \in \suc_{T_m}(\nu)$ such that 
$(T_m)_{\nu \con \gamma}$ is not correct for $D$, 
we have that $\suc_{T_m}(\nu) \setminus B_\nu$ is in $I_{m}$. 
Thus, $B_\nu \in I_{m}^+$.

Define 
$$
T_{m+1} := \bigcup \{ (T_m)_{\nu \con \gamma} : 
\nu \in T_m, \ \lh(\nu) = m, \ \gamma \in B_\nu \}.
$$
Then $T_{m+1} \le_m T_m$. 
Let us verify the inductive hypothesis for $T_{m+1}$. 
Suppose that $\xi \in T_{m+1}$ has length $m+1$. 
Then $\xi = \nu \con \gamma$ for some $\nu \in T_m$ with 
$\lh(\nu) = m$ and $\gamma \in B_\nu$. 
By definition, $(T_m)_{\nu \con \gamma}$ is not correct for $D$, and 
$(T_m)_{\nu \con \gamma} = (T_{m+1})_\xi$.

\bigskip

This completes the construction of the fusion sequence 
$\langle T_n : n < \omega \rangle$. 
Let $T := \bigcap \{ T_n : n < \omega \}$. 
Then $T \in \p$, and $T \le_n T_n$ for all $n < \omega$. 
Since $D$ is dense, we can fix $W \le T$ in $D$. 
Let $\nu$ be the stem of $W$, and fix $m$ with 
$\lh(\nu) = m$. 
Since the stem of $T$ is $\eta$, $m \ge \lh(\eta) = n^*$. 
Then $\nu \in T$, and hence $\nu \in T_m$. 
By the inductive hypothesis for $m$, we have that 
$(T_m)_\nu$ is not correct for $D$. 
But since $\nu$ is the stem of $W$, 
$W \le^* (T_m)_\nu$. 
Since $W \in D$, it follows that $(T_m)_\nu$ is correct for $D$, 
which is a contradiction.
\end{proof}

\begin{lemma}
Let $m < \omega$ and $\lambda$ be an ordinal, and suppose that 
for all $n \ge m$, $\lambda < \kappa_n$. 
Suppose that $S \in \p$ and the stem of $S$ has length at least $m$, 
and assume that $S \Vdash \dot \alpha < \lambda$. 
Then there is $T \le^* S$ such that $T$ decides the value of $\dot \alpha$.
\end{lemma}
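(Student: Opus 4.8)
The plan is to combine Lemma 6.5 with a downward recursion that uses the $\kappa_k$-completeness of the ideals $I_k$ to force the value of $\dot \alpha$ to be constant along a suitable subtree. First I would apply Lemma 6.5 to the set $D$ of conditions that decide $\dot\alpha$, that is, $D = \{ R \in \p : R \Vdash \dot\alpha = \check\beta \text{ for some ordinal } \beta \}$, which is easily seen to be dense and open. This yields $W \le^* S$ and some $n < \omega$, with $n$ at least the length of the stem of $S$, such that $W_\nu \in D$ for every $\nu \in W$ with $\lh(\nu) = n$; for each such $\nu$ let $\alpha_\nu$ be the unique ordinal with $W_\nu \Vdash \dot\alpha = \check\alpha_\nu$, and note $\alpha_\nu < \lambda$ since $W_\nu \le S$. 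Write $\eta^*$ for the stem of $S$ (equivalently, of $W$) and $n^* = \lh(\eta^*)$. The role of the hypotheses $n^* \ge m$ and ``$\lambda < \kappa_k$ for $k \ge m$'' is that every node of $W$ of length $k \ge n^*$ extends $\eta^*$, hence (by Definition 6.1(2)) has successor set in $I_k^+$, and at every such level one has $\lambda < \kappa_k$ with $I_k$ being $\kappa_k$-complete.

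The core of the argument is a downward recursion on $k$ from $n$ to $n^*$, assigning to each node $\eta \in W$ of length $k$ an ordinal $v(\eta) < \lambda$ and a condition $W^\eta \le^* W_\eta$ with stem $\eta$, so that every $\nu \in W^\eta$ of length $n$ satisfies $\alpha_\nu = v(\eta)$. At level $n$ I would set $W^\eta = W_\eta$ and $v(\eta) = \alpha_\eta$. For the step from $k+1$ to $k$, with $n^* \le k < n$, given $\eta$ of length $k$ I would consider the function $\gamma \mapsto v(\eta \con \gamma)$ on $\suc_W(\eta)$, which takes values in $\lambda$; since $\suc_W(\eta) \in I_k^+$ and $I_k$ is $\kappa_k$-complete while $\lambda < \kappa_k$, not all of its fibers can lie in $I_k$, so there is $v(\eta) < \lambda$ with $A_\eta := \{ \gamma \in \suc_W(\eta) : v(\eta \con \gamma) = v(\eta) \} \in I_k^+$, and I would put $W^\eta = \bigcup \{ W^{\eta \con \gamma} : \gamma \in A_\eta \}$. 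Then $W^\eta$ is a tree whose stem is $\eta$, with $\suc_{W^\eta}(\eta) = A_\eta \in I_k^+$ and splitting above $\eta$ inherited from the conditions $W^{\eta \con \gamma}$, so $W^\eta \in \p$; it satisfies $W^\eta \le^* W_\eta$ since each $W^{\eta \con \gamma} \le^* W_{\eta \con \gamma} \subseteq W_\eta$ and the stems agree; and the homogeneity clause is immediate from the inductive hypothesis.

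Finally I would take $T = W^{\eta^*}$, which by construction satisfies $T \le^* W \le^* S$, and set $\alpha^* = v(\eta^*) < \lambda$, so that every $\nu \in T$ of length $n$ has $\alpha_\nu = \alpha^*$. To conclude $T \Vdash \dot\alpha = \check\alpha^*$ I would show that $\{ R \le T : R \Vdash \dot\alpha = \check\alpha^* \}$ is dense below $T$: given $V \le T$, choose some $\nu \in V$ of length $n$ (possible since $V$ is a condition, by truncating or extending its stem); then $\nu \in T$, so $T_\nu \Vdash \dot\alpha = \check\alpha_\nu = \check\alpha^*$, while $V_\nu$ is a common extension of $V$ and $T_\nu$, so $V_\nu \le V$ forces $\dot\alpha = \check\alpha^*$.

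I expect the recursion step to be where the care is needed: one must verify that the amalgamated tree $W^\eta$ genuinely satisfies the splitting and ideal requirements of Definition 6.1 and that the $\le^*$ relations are correctly tracked through the union. Conceptually, however, the only essential ingredient is the $\kappa_k$-completeness of $I_k$ together with $\lambda < \kappa_k$, which is available precisely because every level $k$ touched by the recursion satisfies $k \ge n^* \ge m$.
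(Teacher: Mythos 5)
Your argument is correct and is essentially the paper's own proof: both start from Lemma 6.4 (your opening citation should read 6.4, not 6.5) and then push the decision down to the stem level by homogenizing the decided value over an $I_{k}^{+}$-set of successors of each node at level $k \ge \lh(\eta^*) \ge m$, using $\kappa_k$-completeness together with $\lambda < \kappa_k$ and then amalgamating the subtrees; the paper merely packages your explicit downward recursion as a minimal-counterexample argument on the level $k$, which lets it conclude at the stem without tracking the value function $v$ or running your final density argument. The only other cosmetic point is that, to make density of $D$ immediate for an arbitrary name $\dot\alpha$, you should (as the paper does) throw the conditions incompatible with $S$ into $D$, since only below $S$ is $\dot\alpha$ guaranteed to be forced to be an ordinal.
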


\begin{proof}
Let $\eta$ be the stem of $S$ and let $n^* := \lh(\eta)$. 
Then $m \le n^*$. 
Let $D$ be the dense open set of conditions which are either 
incompatible with $S$, or below $S$ and decide the value of 
$\dot \alpha$. 
By Lemma 6.4, let $k$ be the smallest natural number 
greater than or equal to $n^*$ such that there exists 
$T \le^* S$ satisfying that for every node $\nu \in T$ with 
length $k$, $T_\nu \in D$. 
We claim that $k = n^*$. 
Then since the stem 
$\eta$ is a node of $T$ of length $n^*$, the claim implies that 
$T_\eta = T$ decides the value of $\dot \alpha$, and we are done.

Suppose for a contradiction that $k > n^*$. 
We will prove that there is $W \le^* S$ such that any node $\nu$ of 
$W$ of length $k-1$ satisfies that $W_\nu$ is in $D$. 
This will contradict the minimality of $k$ and finish the proof.

Consider any node $\nu$ of $T$ of length $k - 1$. 
Then since $k - 1 \ge n^*$, it follows that $\nu$ is a splitting node of $T$. 
By the choice of $T$ and $k$, we have that 
for any $\gamma \in \suc_T(\nu)$, 
$T_{\nu \con \gamma}$ is in $D$, 
and hence decides $\dot \alpha$ to be 
equal to some ordinal $\alpha(\nu,\gamma) < \lambda$. 
There are at most $\lambda$ many possibilities for $\alpha(\nu,\gamma)$. 
Since $k-1 \ge n^* \ge m$, $\lambda < \kappa_{k-1}$. 
As $I_{k-1}$ is $\kappa_{k-1}$-complete, we can find 
an ordinal $\alpha(\nu) < \lambda$ and a set 
$\suc(\nu) \subseteq \suc_T(\nu)$ which is in $I_{k-1}^+$ such that 
for all $\gamma \in \suc(\nu)$, $\alpha(\nu,\gamma) = \alpha(\nu)$.

Now define 
$$
W := \bigcup 
\{ T_{\nu \con \gamma} : \nu \in T, \ \lh(\nu) = k-1, \ 
\gamma \in \suc(\nu) \}.
$$
Then $W \le^* T \le^* S$, and so $W \le^* S$. 
To complete the proof, we show that for any node $\nu \in W$ with 
length $k-1$, $W_\nu \in D$. 
So let $\nu \in W$ have length $k-1$. 
Then for any 
$\gamma \in \suc_{W}(\nu)$, $W_{\nu \con \gamma} = 
T_{\nu \con \gamma}$ forces that $\dot \alpha$ is equal to 
$\alpha(\nu)$. 
It easily follows that 
$W_\nu$ forces that $\dot \alpha$ is equal to $\alpha(\nu)$, 
and hence $W_\nu \in D$.
\end{proof}

\begin{corollary}
For any $S \in \p$ and any statement $\varphi$ in the forcing language 
for $\p$, 
there is $T \le^* S$ which decides the truth value of $\varphi$.
\end{corollary}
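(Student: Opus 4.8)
The statement $\varphi$ is a sentence of the forcing language (possibly with parameters), and the key observation is that deciding its truth value is the same as deciding the value of a name for an ordinal below $2$, so this is a special case of Lemma 6.6. First I would introduce a $\p$-name $\dot\alpha$ with the property that $\mathbf{1}_\p \Vdash ``\dot\alpha = 0"$ if $\varphi$ holds and $\mathbf{1}_\p \Vdash ``\dot\alpha = 1"$ otherwise; explicitly, $\dot\alpha$ is the canonical name for the element of $\{0,1\}$ coding the truth value of $\varphi$. In particular, every condition forces $\dot\alpha < 2$.

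Next I would check that the hypotheses of Lemma 6.6 are satisfied with $\lambda = 2$ and $m = 0$. Since each $\kappa_n$ is a regular cardinal with $\kappa_n \ge \omega_2$, we have $2 < \kappa_n$ for every $n < \omega$, so the requirement ``for all $n \ge m$, $\lambda < \kappa_n$'' holds with $m = 0$. The condition that the stem of $S$ have length at least $m = 0$ is vacuous. Hence Lemma 6.6 applies to the given condition $S$ and the name $\dot\alpha$.

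Applying Lemma 6.6, I obtain $T \le^* S$ such that $T$ decides the value of $\dot\alpha$. If $T \Vdash ``\dot\alpha = 0"$ then $T \Vdash \varphi$; if $T \Vdash ``\dot\alpha = 1"$ then $T \Vdash \neg\varphi$. In either case $T$ decides the truth value of $\varphi$, which is exactly what is required, with the stem of $T$ equal to that of $S$.

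\textbf{Main obstacle.} There is essentially no obstacle here: the entire content is packaged into Lemma 6.6, and the only thing to verify is the harmless numerical fact $2 < \kappa_n$ for all $n$. The one point worth stating carefully is that we are allowed to take $m = 0$, so that no assumption on the length of the stem of $S$ is needed; this is what makes the corollary hold for \emph{every} $S \in \p$ rather than only for conditions with sufficiently long stems.
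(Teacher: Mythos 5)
Your proof is correct and is essentially the same as the paper's: introduce a name $\dot\alpha$ for an ordinal below $2$ coding the truth value of $\varphi$ and apply the decision lemma (Lemma 6.5 in the paper, which you cite as 6.6) with $m=0$ and $\lambda=2$, noting that $2<\kappa_n$ for all $n$ and that the stem-length hypothesis is vacuous. The only difference is your reversed convention for which value of $\dot\alpha$ codes truth, which is immaterial.
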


\begin{proof}
Let $\dot \alpha$ be a $\p$-name which is forced to be equal to $1$ if 
$\varphi$ is true, and $0$ if $\varphi$ is false. 
Now apply Lemma 6.5 letting $m = 0$ and $\lambda = 2$. 
\end{proof}

\begin{proposition}
Let $\kappa := \lim \sup \{ \kappa_n : n < \omega \}$. 
Then for any regular cardinal $\lambda > \kappa$, $\p$ forces that 
$\cf(\lambda) > \omega$.
\end{proposition}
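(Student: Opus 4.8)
The plan is to show that $\p$ forces $\cf(\lambda)>\omega$ by a standard Namba-style fusion argument, using the fact that $\lambda$ is a regular cardinal strictly above $\kappa=\lim\sup\{\kappa_n : n<\omega\}$. First I would fix a condition $S\in\p$ and a $\p$-name $\dot g$ such that $S\Vdash \dot g : \omega \to \lambda$, and aim to find $T\le S$ and an ordinal $\delta<\lambda$ such that $T\Vdash \ran(\dot g)\subseteq\delta$; since $S$ and $\dot g$ were arbitrary, this shows no countable sequence of ordinals below $\lambda$ is cofinal in $\lambda$, i.e.\ $\cf(\lambda)>\omega$ in $V^\p$.

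The key point is that since $\kappa=\lim\sup_n\kappa_n$ and $\lambda>\kappa$ is regular, we have $\lambda<\kappa_n$ for all but finitely many $n$; fix $m<\omega$ such that $\lambda<\kappa_n$ for all $n\ge m$. Extending $S$ if necessary, I may assume the stem of $S$ has length at least $m$. Now I would build a fusion sequence $\langle T_n : n<\omega\rangle$ using Lemma 6.3 so that, for each $k<\omega$, every node $\nu$ at level $n$ (for $n$ large relative to $k$) decides $\dot g(k)$: more precisely, at stage $n$, for each node $\nu\in T_n$ of length $n$ with $n\ge m$, I apply Lemma 6.5 (with parameter $\lambda$, which is legitimate since $\lambda<\kappa_n$) to find for each $\xi$ in a suitable $I_n^+$-set $\suc(\nu)$ a direct extension $U(\nu,\xi)\le^* (T_n)_{\nu\con\xi}$ deciding $\dot g(n-m)$, say forcing $\dot g(n-m)=\alpha(\nu,\xi)<\lambda$. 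Then $T:=\bigcap_n T_n$ is a condition by Lemma 6.2, and below $T$ every node of length $n$ decides $\dot g(n-m)$.

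To finish, I need to bound the set of values that get decided. For each $k<\omega$, the values $\dot g(k)$ decided by nodes of length $k+m$ below $T$ form a set $A_k$ of ordinals below $\lambda$; since $T$ has height $\omega$, each level is small enough that $A_k$ has size less than $\lambda$ (indeed each $A_k$ is a set of size at most the number of nodes at a fixed finite level, which is a set of bounded size, certainly $<\lambda$), and since $\lambda$ is regular, $\delta:=\sup\bigl(\bigcup_k A_k\bigr)<\lambda$. Then $T\Vdash \ran(\dot g)\subseteq\delta$. \textbf{The main obstacle} is the bookkeeping in the fusion: I must interleave, across the levels $n\ge m$ of the tree, the requirement ``decide $\dot g(k)$'' for each $k$, applying Lemma 6.5 uniformly to all finitely many nodes at each level while shrinking successor-sets to $I_n^+$-sets, and then verify that the set of decided values at each level is genuinely of size $<\lambda$ so that the supremum over all $\omega$ levels stays below the regular cardinal $\lambda$. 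Care is also needed that shrinking to $\suc(\nu)\in I_n^+$ at level $n$ does not disturb the already-fixed lower levels, which is exactly what the $\le_n$ part of the fusion in Lemma 6.3 guarantees.
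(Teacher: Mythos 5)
There is a genuine gap, and it is the central inequality of your argument: you claim that since $\lambda > \kappa = \lim\sup\{\kappa_n : n < \omega\}$, we have $\lambda < \kappa_n$ for all but finitely many $n$. The opposite is true. By definition of $\lim\sup$ there is $m$ with $\kappa = \sup\{\kappa_n : n \ge m\}$, so $\kappa_n \le \kappa < \lambda$ for all $n \ge m$; your claimed situation ($\kappa_n > \lambda > \lim\sup_n \kappa_n$ eventually) would force $\lim\inf_n \kappa_n > \lim\sup_n \kappa_n$, which is impossible. Consequently the hypothesis of Lemma 6.5 (that $\lambda < \kappa_n$ for all $n \ge m$) fails, and you cannot invoke it to get direct extensions deciding $\dot g(n-m)$ at each node. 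This is not merely a citation problem: when $\lambda$ exceeds the $\kappa_n$'s, a value below $\lambda$ need not be decidable by any $\le^*$-extension at all. For instance, the generic function $\dot F$ of Proposition 6.8 satisfies that $\dot F(n)$ ranges over an $I_n^+$-set of ordinals below $\kappa_n \le \kappa < \lambda$, and no direct extension decides it, so a name $\dot g$ built from such values defeats your fusion scheme outright. (Your final counting step also quietly relies on levels of the tree having size $< \lambda$, which would fail in the regime $\lambda < \kappa_n$ you posited, though it does hold in the true regime $\kappa_n \le \kappa < \lambda$.)

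The paper's proof repairs exactly this point. Instead of deciding each value by a direct extension, it applies Lemma 6.4: below each node $\nu$ of length $n$ one finds $U(\nu) \le^* (T_n)_\nu$ and a deeper level $n(\nu)$ such that every node $\sigma$ of $U(\nu)$ at level $n(\nu)$ decides $\dot f(n-m)$, with value $\xi(\nu,\sigma)$ possibly depending on $\sigma$. Since $\kappa_k \le \kappa$ for all $k \ge m$, each level of the fusion tree has at most $\kappa$ many nodes, so the set $A_n$ of possible values of $\dot f(n-m)$ has size at most $\kappa$, and the fusion $T$ forces $\ran(\dot f) \subseteq A := \bigcup_n A_n$ with $|A| \le \kappa < \lambda$. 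Regularity of $\lambda$ then bounds $\ran(\dot f)$ below $\lambda$. If you reorganize your argument along these lines --- bounding the \emph{set} of possible values rather than deciding each value --- the rest of your bookkeeping (interleaving requirements along levels $n \ge m$ and using the $\le_n$ clause of Lemma 6.3) goes through.
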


\begin{proof}
Let $S$ be a condition and suppose that $S$ forces that 
$\dot f : \omega \to \lambda$ is a function. 
We will find $T \le S$ which forces that $\dot f$ is bounded in $\lambda$. 
It suffices to find $T \le S$ and a set $A$ of size at most $\kappa$ 
such that $T$ forces that $\ran(\dot f) \subseteq A$.

Define a fusion sequence $\langle T_n : n < \omega \rangle$ as follows. 
Fix $m < \omega$ which is greater than or equal to 
the length of the stem of $S$ such that 
$\kappa = \sup \{ \kappa_n : m \le n \}$. 
Fix $\eta \in S$ with $\lh(\eta) = m$. 
For all $n \le m$, let $T_n := S_\eta$.

Let $n \ge m$, and assume that $T_k$ is defined for all $k \le n$. 
We will define $T_{n+1}$. 
Consider a node $\nu \in T_n$ of length $n$. 
Let $D_n$ be the dense open set of conditions which are either incompatible 
with $S$, or below $S$ and decide the value of $\dot f(n-m)$. 
Applying Lemma 6.4 to $(T_n)_\nu$ and $D_n$, we can 
fix $n(\nu) < \omega$ 
greater than or equal to $n$ 
and $U(\nu) \le^* (T_n)_\nu$ such that for 
any node $\sigma$ in $U(\nu)$ of 
length $n(\nu)$, $U(\nu)_\sigma$ is in $D_n$. 
Let $\xi(\nu,\sigma)$ denote the ordinal such that 
$U(\nu)_\sigma$ forces that $\dot f(n-m) = \xi(\nu,\sigma)$.

Define 
$$
T_{n+1} := \bigcup \{ U(\nu) : \nu \in T_n, \ \lh(\nu) = n \}.
$$
Then $T_{n+1} \le_n T_n$. 
Let 
$$
A_n := \{ \xi(\nu,\sigma) : \nu \in T_{n}, \ \lh(\nu) = n, \ 
\sigma \in U(\nu), \ \lh(\sigma) = n(\nu) \}.
$$
It is easy to check that $T_{n+1} \Vdash \dot f(n-m) \in \check A_n$.

We claim that $|A_n| \le \kappa$. 
If $\nu \in T_n$ and $\lh(\nu) = n$, 
then $\eta \unlhd \nu$. 
Since $\kappa_k \le \kappa$ for all $k$ with $\lh(\eta) = m \le k \le n$, 
there are at most $\kappa$ many $\nu \in T_n$ with $\lh(\nu) = n$. 
If $\sigma \in U(\nu)$ and $\lh(\sigma) = n(\nu)$, then since 
$\kappa_k \le \kappa$ for all $k$ with $n < k \le n(\nu)$, there are at most 
$\kappa$ many possibilities for $\sigma$. 
It follows that $|A_n| \le \kappa$.

This completes the construction. 
Let $T := \bigcap \{ T_n : n < \omega \}$ and 
$A := \bigcup \{ A_n : n < \omega \}$. 
Then $T \le S$ and $T$ forces that $\ran(\dot f) \subseteq A$. 
Since $|A_n| \le \kappa$ for all $n < \omega$, 
$|A| \le \kappa < \lambda$.
\end{proof}

\begin{proposition}
The forcing poset $\p$ forces that there exists a countable set $a$ 
such that for any set $c$ in the ground model with size less than 
$\lim \inf \{ \kappa_n : n < \omega \}$ in the ground model, 
$a \cap c$ is finite.

In particular, $\p$ forces that there is a countable set which 
is not covered by any countable set in the ground model.
\end{proposition}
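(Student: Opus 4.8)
The plan is to witness the proposition with the \emph{generic branch} of $\p$. Given a $V$-generic filter $G$, set $b := \bigcup \{ \stem(S) : S \in G \}$ and $a := \ran(b)$. The first thing to verify is that $b$ really is an $\omega$-sequence of ordinals with $b(n) < \kappa_n$ for all $n$. The stems of conditions in $G$ are pairwise $\unlhd$-comparable: if $W \le S$ are both in $G$, then $\stem(W)$ is a splitting node of $W$ and lies in $S$, but no node of $S$ strictly below $\stem(S)$ splits in $S$, so $\stem(S) \unlhd \stem(W)$ (and likewise for any $S' \in G$ compatible with $W$). A density argument shows these stems have unbounded length, so $b$ is a function with domain $\omega$, and each $b(n)$, being a successor of a node of length $n$, is an ordinal below $\kappa_n$. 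I will also record the easy facts that $b \restriction k \in S$ for every $S \in G$ and every $k < \omega$; in particular, whenever $T \in G$ we have $b \restriction (n+1) \in T$, hence $b(n) \in \suc_T(b \restriction n)$, for every $n$.

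The main step is the construction. Fix an arbitrary condition $S$ and an arbitrary $c \in V$ with $|c| < \mu := \lim\inf \{ \kappa_n : n < \omega \}$; I will produce $T \le S$ forcing that $a \cap c$ is finite and that $a$ is infinite. By the definition of $\lim\inf$ there is $m < \omega$, which I take to be at least the length of the stem of $S$, such that $|c| < \kappa_n$ for all $n \ge m$. After passing to $S_\theta$ for a node $\theta \in S$ of length $m$ extending the stem of $S$ (so that $S_\theta \le S$ has stem $\theta$), I build a fusion sequence $\langle T_n : n < \omega \rangle$ in the manner of Lemma 6.3: put $T_k := S_\theta$ for $k \le m$, and for $n \ge m$ and each $\nu \in T_n$ of length $n$ set $\suc(\nu) := \suc_{T_n}(\nu) \setminus (c \cup \ran(\nu))$ and $U(\nu,\xi) := (T_n)_{\nu \con \xi}$ for $\xi \in \suc(\nu)$. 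With $T := \bigcap_n T_n$, Lemmas 6.2 and 6.3 give $T \in \p$ and $T \le S_\theta \le S$. The one genuine point to check — and this is the heart of the proof — is that each such $\suc(\nu)$ still belongs to $I_n^+$: both $c \cap \kappa_n$ and the finite set $\ran(\nu)$ have size less than $\kappa_n$ and therefore lie in $I_n$, since $I_n$ is $\kappa_n$-complete and uniform, whereas $\suc_{T_n}(\nu) \in I_n^+$ because $\nu$ is at or above the stem of $T_n$. This is exactly where the hypotheses that the $I_n$ are $\kappa_n$-complete and uniform, together with the choice of $m$ via $\lim\inf$, enter; everything else amounts to checking the hypotheses of Lemma 6.3.

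To finish, I will observe that by construction every $\nu \in T$ of length $n \ge m$ satisfies $\suc_T(\nu) \cap (c \cup \ran(\nu)) = \emptyset$, so, combining with the facts recorded in the first paragraph, $T$ forces that $b(n) \notin c$ and $b(n) \notin \ran(b \restriction n)$ for all $n \ge m$. Hence $T$ forces that $a \cap c \subseteq \{ b(i) : i < m \}$, a set of size at most $m$, and that $b \restriction [m, \omega)$ is injective, so that $a$ is infinite. Since $S$ and $c$ were arbitrary, for each $c \in V$ with $|c| < \mu$ the conditions forcing $|a \cap c| < \omega$ are dense, and the conditions forcing $a$ infinite are dense; genericity then yields that $\p$ forces $a$ to be an infinite countable set with $a \cap c$ finite for every $c \in V$ of size less than $\mu$. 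The final assertion is then immediate: every countable $c \in V$ has size at most $\omega < \omega_2 \le \mu$, so $\p$ forces $a \cap c$ finite, and since $a$ is infinite this means $a \not\subseteq c$. I do not expect any obstacle beyond the $I_n^+$ verification above and the routine arguments that the generic branch behaves as described.
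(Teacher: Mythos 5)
Your proof is correct and takes essentially the same approach as the paper: both read off a countable set from the stems of conditions in the generic filter and run a fusion construction pruning successor sets above a level $m$ (chosen so that $|c| < \kappa_n$ for all $n \ge m$), using the $\kappa_n$-completeness and uniformity of $I_n$ to keep the pruned sets in $I_n^+$. Your additional pruning by $\ran(\nu)$ to make the generic branch injective past $m$ is a harmless variant; the paper instead obtains the non-covering conclusion directly from the fact that the branch avoids $c$ at every coordinate $n \ge m$.
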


\begin{proof}
Let $\dot F$ be a $\p$-name for a function 
such that $\p$ forces that for all 
$n < \omega$, $\dot F(n) = \alpha$ iff there is $S \in \dot G_\p$ 
with stem $\eta$ of length greater than $n$ such that 
$\eta(n) = \alpha$. 
It is straightforward to check that $\p$ forces that 
$\dot F$ is well-defined and is a 
total function on $\omega$. 
Let $\dot a$ be a $\p$-name for the range of $\dot F$.

Let $c$ be a set with size less than $\lim \inf \{ \kappa_n : n < \omega \}$ 
and $S \in \p$, and we will find $T \le S$ 
which forces that $\dot a \cap c$ is finite. 
Fix an integer $m$ which is greater than or equal to the length of the stem of $S$ 
such that for all $n \ge m$, $|c| < \kappa_n$. 
We define a fusion sequence $\langle T_n : n < \omega \rangle$. 
Fix $\eta \in S$ with $\lh(\eta) = m$, and let 
let $T_n := S_\eta$ for all $n \le m$.

Let $n \ge m$ be given, and assume that $T_n$ is defined. 
For each $\nu \in T_n$ of length $n$, 
define $\suc(\nu) := \suc_{T_n}(\nu) \setminus c$. 
Since $|c| < \kappa_n$ and $I_n$ is $\kappa_n$-complete, 
$\suc(\nu) \in I_n^+$. 
Note that for all $\gamma \in \suc(\nu)$, 
$(T_n)_{\nu \con \gamma}$ forces that 
$\dot F(n) = \gamma \notin c$. 
Now let 
$$
T_{n+1}:= \bigcup \{ (T_n)_{\nu \con \gamma} : 
\nu \in T_n, \ \lh(\nu) = n, \ \gamma \in \suc(\nu) \}.
$$

This completes the construction of $\langle T_n : n < \omega \rangle$. 
Let $T := \bigcap \{ T_n : n < \omega \}$. 
We claim that $T$ forces that $\dot a \cap c \subseteq \ran(\eta)$. 
First, since $\eta$ is the stem of $T$, 
$T$ forces that $\dot F \restriction m = \eta$.

Secondly, consider $n \ge m$. 
Then by Lemma 6.3, the set 
$$
\{ (T_n)_{\nu \con \gamma} : \nu \in T_n, \ \lh(\nu) = n, \ 
\gamma \in \suc(\nu) \}
$$
is predense below $T$. 
But any condition in this set forces that $\dot F(n) \notin c$. 
Therefore, $T$ forces that $\dot F(n) \notin c$. 
It follows that $T$ forces that anything which is in both 
$\dot a = \ran(\dot F)$ and in $c$ is in the range of 
$\dot F \restriction m = \eta$.

For the second statement, 
note that $\omega < \lim \inf \{ \kappa_n : n < \omega \}$. 
\end{proof}

\begin{corollary}
The forcing poset $\p$ does not have the 
$\omega_1$-approximation property.
\end{corollary}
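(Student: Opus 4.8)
The plan is to read the failure of the $\omega_1$-approximation property directly off Proposition 6.8. That proposition produces, in the extension by $\p$, a countable set of ordinals $a$ such that $a \cap c$ is finite for every set $c$ in the ground model which has size less than $\lim \inf \{ \kappa_n : n < \omega \}$ in the ground model. I would argue that in $V^\p$ this set $a$ is a bounded subset of $V \cap On$ which is ``$\omega_1$-approximated'' by $V$ yet does not belong to $V$; this is precisely what it means for $\p$ to force that $(V,V^\p)$ fails the $\omega_1$-approximation property, which is the assertion of the corollary.

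First I would note that $a$, being a set of ordinals, is in particular a bounded subset of $V \cap On$. Since each $\kappa_n$ is at least $\omega_2$, we have $\omega < \lim \inf \{ \kappa_n : n < \omega \}$, so every set $c \in V$ which is countable in $V$ falls under the hypothesis of Proposition 6.8. Hence $a \cap c$ is finite, and therefore an element of $V$, for every such $c$; that is, $a$ satisfies the approximation hypothesis appearing in the definition of the $\omega_1$-approximation property.

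Then I would check that $\p$ forces $a \notin V$. Note first that $\p$ forces $a$ to be infinite, since otherwise $a$ would be a finite, hence ground model, set of ordinals, contradicting the conclusion of Proposition 6.8 that $a$ is not covered by any countable --- in particular by any finite --- set in the ground model. Now suppose toward a contradiction that $a \in V$. As finiteness is absolute, $a$ is infinite in $V$ as well, so we may fix in $V$ an infinite subset $c \subseteq a$ which is countable in $V$. Then $c$ is again a countable set in $V$, so Proposition 6.8 gives that $a \cap c = c$ is finite, a contradiction. Hence $a \notin V$, and so $\p$ forces that $(V,V^\p)$ does not have the $\omega_1$-approximation property; equivalently, $\p$ does not have the $\omega_1$-approximation property.

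There is no serious obstacle in this argument: it is essentially immediate from Proposition 6.8 together with the triviality that finite sets of ordinals belong to $V$. One may read the corollary as a counterpoint to Mitchell's Theorem 1.10 --- since $\p$ changes cofinalities and adds a countable set of ordinals that is not covered by any countable ground model set, it is as far as possible from being strongly proper on a stationary set.
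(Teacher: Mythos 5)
Your argument is correct and is exactly the intended route: the paper's proof of this corollary is simply ``Immediate from Proposition 6.8,'' and your write-up just makes explicit the two easy points (the approximations $a\cap c$ are finite, hence in $V$, while $a\in V$ would yield an infinite ground-model countable $c\subseteq a$ with $a\cap c=c$ infinite, a contradiction). No gaps.
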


\begin{proof}
Immediate from Proposition 6.8.
\end{proof}

It is not necessarily the case that the Namba forcing 
$\p$ satisfies the weak $\omega_1$-approximation property. 
For example, assume \textsf{CH} and $\kappa_n = \omega_2$ for 
all $n < \omega$. 
Then by \cite{namba} and \cite[Chapter XI]{shelah}, $\p$ does not 
add any reals. 
On the other hand, since $\p$ adds a countable cofinal subset 
of $\omega_2$, it collapses $\omega_2$, and therefore 
adds a new subset of $\omega_1$. 
But since $\p$ does not add reals, all of the proper initial segments 
of the characteristic function of the new subset of $\omega_1$ are 
in the ground model. 
Therefore, $\p$ does not have the weak $\omega_1$-approximation 
property.

In order to find a Namba forcing which does have the 
weak $\omega_1$-approximation property, we need to make 
an additional assumption about the ideals $I_n$, for $n < \omega$.

\begin{assumption}
For each $n < \omega$, there is a regular uncountable cardinal 
$\mu_n \le \kappa_n$ and a set $P_n \subseteq I_n^+$ 
satisfying:
\begin{enumerate}
\item for every $A \in I_n^+$, there is $B \in P_n$ such that 
$B \subseteq A$;
\item if $\langle A_i : i < \delta \rangle$ is a $\subseteq$-decreasing 
sequence of sets in $P_n$, where $\delta < \mu_n$, 
then there is $B \in P_n$ such that 
$B \subseteq \bigcap \{ A_i : i < \delta \}$.
\end{enumerate}
\end{assumption}

In the next section we will describe a model in which Assumption 6.10 holds.

\begin{definition}
Let $\p'$ denote the suborder of $\p$ consisting of conditions $S$ 
satisfying that for any splitting node $\nu \in S$, 
$\suc_S(\nu) \in P_{\lh(\nu)}$.
\end{definition}

\begin{lemma}
The set $\p'$ is dense in $\p$. 
In fact, for all $S \in \p$, there is $T \in \p'$ such that $T \le^* S$.
\end{lemma}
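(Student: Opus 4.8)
The plan is to prove the stronger ``in fact'' clause directly, since the density of $\p'$ in $\p$ then follows at once from the observation that $T \le^* S$ implies $T \le S$. Fix $S \in \p$ and let $\eta^*$ be its stem. I will build $T$ by a straightforward recursion on the length of finite sequences, at each node above the stem thinning out the successor set so that it lands in the appropriate $P_n$; no fusion argument is needed, because the thinnings performed at distinct nodes do not interact with one another.

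In detail, the nodes of $T$ of length at most $\lh(\eta^*)$ will be exactly the initial segments of $\eta^*$. Given a node $\nu$ already placed into $T$ with $\eta^* \unlhd \nu$, clause (2) of Definition 6.1 gives $\suc_S(\nu) \in I_{\lh(\nu)}^+$, so by clause (1) of Assumption 6.10 we may fix a set $B_\nu \in P_{\lh(\nu)}$ with $B_\nu \subseteq \suc_S(\nu)$; declare $\suc_T(\nu) := B_\nu$, that is, put $\nu \con \xi$ into $T$ for every $\xi \in B_\nu$. Let $T$ be the tree of all nodes obtained in this way. Note that only clause (1) of Assumption 6.10 is used for this lemma; clause (2) will be needed later, for the fusion-type arguments establishing the weak approximation property.

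It remains to verify that $T$ has the required properties. By construction $T$ is closed under initial segments, hence a tree of finite sequences, and every node of $T$ lies in $S$: the nodes of length at most $\lh(\eta^*)$ are initial segments of $\eta^* \in S$, and the recursion only ever adds nodes $\nu \con \xi$ with $\xi \in B_\nu \subseteq \suc_S(\nu)$; thus $T \subseteq S$, i.e.\ $T \le S$. The node $\eta^*$ witnesses clause (1) of Definition 6.1 for $T$. For each $\nu \in T$ with $\eta^* \unlhd \nu$ we have $\suc_T(\nu) = B_\nu \in P_{\lh(\nu)} \subseteq I_{\lh(\nu)}^+$; since $I_{\lh(\nu)}$ is a uniform ideal on $\kappa_{\lh(\nu)} \ge \omega_2$ and $B_\nu \in I_{\lh(\nu)}^+$, the set $B_\nu$ has size $\kappa_{\lh(\nu)} > 1$, so $\nu$ is a splitting node, which gives clause (2); hence $T \in \p$. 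Moreover no proper initial segment of $\eta^*$ is a splitting node of $T$ while every $\nu \in T$ with $\eta^* \unlhd \nu$ is, so the stem of $T$ is exactly $\eta^*$ and therefore $T \le^* S$. Finally the splitting nodes of $T$ are precisely the nodes $\nu \in T$ with $\eta^* \unlhd \nu$, and for each of them $\suc_T(\nu) = B_\nu \in P_{\lh(\nu)}$, so $T \in \p'$. The only point demanding any care is checking that the recursively defined node set really is a condition of $\p$ with stem $\eta^*$ — in particular that the sets $B_\nu$ are never singletons — and this is exactly where uniformity of the ideals $I_n$ is used; beyond that the argument presents no genuine obstacle.
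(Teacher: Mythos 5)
Your proof is correct and is essentially the paper's argument: the paper performs the same node-by-node shrinking of successor sets into $P_{\lh(\nu)}$ via Assumption 6.10(1), merely packaged as a fusion sequence whose intersection is your tree $T$ (citing Lemma 6.2 for the verification you carry out by hand). Your observation that the thinnings at distinct nodes do not interact, so no fusion formalism is needed, is accurate, and your direct check that $T$ is a condition with stem $\eta^*$ lying in $\p'$ is complete.
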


\begin{proof}
Let $m$ be the length of the stem of $S$. 
Define a fusion sequence $\langle T_n : n < \omega \rangle$ as follows. 
Let $T_n := S$ for all $n \le m$. 

Let $n \ge m$, and assume that $T_n$ is defined. 
Let $\nu$ be a node of $T_n$ with length $n$. 
Pick a set $\suc(\nu)$ in $P_n$ such that 
$\suc(\nu) \subseteq \suc_{T_n}(\nu)$. 
Define 
$$
T_{n+1} = \bigcup \{ (T_n)_{\nu \con \gamma} : 
\nu \in T_n, \ 
\lh(\nu) = n, \ \gamma \in \suc(\nu) \}.
$$

Now let $T := \bigcap \{ T_n : n < \omega \}$. 
Then $T \in \p$, and for all $n < \omega$, 
$T \le^* T_n$ and $T \le_n T_n$. 
In particular, the stem of $T$ is equal to the stem of $S$. 

To show that $T \in \p'$, let $\nu \in T$ be a splitting node, and let 
$n$ be the length of $\nu$. 
Then $m \le n$ and $\nu \in T_{n+1}$. 
Since $T \le_{n+1} T_{n+1}$, $T$ and $T_{n+1}$ have the same 
nodes of length $n+1$. 
In particular, $\suc_{T}(\nu) = \suc_{T_{n+1}}(\nu) = 
\suc(\nu) \in P_n$. 
So $T \in \p'$.
\end{proof}

\begin{lemma}
Let $m < \omega$, and suppose that $\delta$ is a limit ordinal such that 
$\delta < \mu_n$ for all $n \ge m$. 
Let $\langle T_i : i < \delta \rangle$ be a $\le^*$-descending 
sequence of conditions in $\p'$ such that the stem of 
$T_0$ has length at least $m$. 
Then there is $W \in \p'$ such that $W \le^* T_i$ for all $i < \delta$.
\end{lemma}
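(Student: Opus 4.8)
\emph{Proof proposal.} The plan is to take the intersection
$T^* := \bigcap_{i < \delta} T_i$ as the candidate lower bound, verify
that $T^* \in \p$ (this is the only place Assumption 6.10 is used), and
then invoke Lemma 6.13 applied to $T^*$ to obtain a $W \in \p'$ with
$W \le^* T^*$. First I would record that all the $T_i$ share a common
stem: since $\langle T_i : i < \delta \rangle$ is $\le^*$-descending and
$\le^*$ is transitive, $T_i \le^* T_0$ for every $i < \delta$, so each
$T_i$ has the same stem $\eta$ as $T_0$, with $\lh(\eta) \ge m$. It is
immediate that $T^*$ is a tree of finite sequences, that $\eta \in T^*$,
and that every node of $T^*$ is comparable with $\eta$ (being a node of
$T_0$); moreover, for $\nu \in T^*$ with $\nu \unlhd \eta$ we have
$\suc_{T^*}(\nu) = \bigcap_{i < \delta} \suc_{T_i}(\nu)$, which is the
singleton $\{ \eta(\lh(\nu)) \}$, so $\nu$ is not a splitting node.
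Hence condition (1) of Definition 6.1 holds with stem $\eta$, and only
condition (2) remains.

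The heart of the argument is the following. Fix $\nu \in T^*$ with
$\eta \unlhd \nu$ and put $n := \lh(\nu) \ge \lh(\eta) \ge m$. Then
$\nu$ is a node of every $T_i$, and since $\eta \unlhd \nu$ and each
$T_i$ lies in $\p'$, we have $\suc_{T_i}(\nu) \in P_n$. Because the
sequence $\langle T_i : i < \delta \rangle$ is $\le^*$-descending, the
sequence $\langle \suc_{T_i}(\nu) : i < \delta \rangle$ is
$\subseteq$-decreasing, and since $n \ge m$ we have $\delta < \mu_n$.
Thus Assumption 6.10(2) supplies $B \in P_n$ with
$B \subseteq \bigcap_{i < \delta} \suc_{T_i}(\nu) = \suc_{T^*}(\nu)$. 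As
$P_n \subseteq I_n^+$ and $I_n$ is an ideal, the superset
$\suc_{T^*}(\nu)$ of $B$ is itself in $I_n^+$; and since a
$\kappa_n$-complete uniform ideal on $\kappa_n$ contains no singleton,
$|\suc_{T^*}(\nu)| \ge |B| > 1$, so $\nu$ is a splitting node of $T^*$.
This establishes condition (2), and therefore $T^* \in \p$.

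To finish, apply Lemma 6.13 to $T^* \in \p$ to get $W \in \p'$ with
$W \le^* T^*$. Then $W \subseteq T^* \subseteq T_i$ for all
$i < \delta$, and $W$, $T^*$, and each $T_i$ all have stem $\eta$, so
$W \le^* T_i$ for all $i < \delta$, as desired. The only nonroutine
point is verifying $T^* \in \p$, i.e.\ that the successor sets of an
intersection of $\delta$-many conditions of $\p'$ stay $I_n$-positive;
this is precisely what the $\mu_n$-closure of the families $P_n$ in
Assumption 6.10 is designed to guarantee (without it, an intersection of
positive sets could drop into $I_n$). Note also that we do not need
$T^*$ itself to belong to $\p'$, since Lemma 6.13 takes care of that;
and the hypothesis that $\delta$ is a limit ordinal is inessential to
the proof (for successor $\delta$ the last term of the sequence is
already a $\le^*$-lower bound in $\p'$), it merely flags the case that
will matter in later fusion arguments.
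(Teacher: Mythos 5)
Your proof is correct and takes essentially the same route as the paper: form the intersection $T^* := \bigcap_{i<\delta} T_i$, use the common stem and Assumption 6.10(2) to see that each successor set $\suc_{T^*}(\nu) = \bigcap_{i<\delta}\suc_{T_i}(\nu)$ above the stem contains a member of $P_{\lh(\nu)}$ and hence is $I_{\lh(\nu)}^+$-positive, conclude $T^* \in \p$ with $T^* \le^* T_i$, and then refine into $\p'$. The only slip is a citation label: the density fact you invoke at the end (every $S \in \p$ has a $\le^*$-extension in $\p'$) is Lemma 6.12, not Lemma 6.13 --- the latter is the statement you are proving.
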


\begin{proof}
Let $T := \bigcap \{ T_i : i < \delta \}$. 
Then $T \subseteq T_i$ for all $i < \delta$. 
Since each condition $T_i$ has the same stem $\eta^*$, 
we also have that $\eta^* \in T$ and every node in $T$ is comparable with $\eta^*$.

Consider a node $\nu \in T$ such that $\eta^* \unlhd \nu$, and we will 
show that $\suc_{T}(\nu) \in I_{\lh(\nu)}^+$. 
As $\lh(\nu) \ge m$, $\delta < \mu_{\lh(\nu)}$. 
Since $\nu \in T$, for all $i < \delta$, $\nu \in T_i$. 
As $T_j \le^* T_i$ for all $i < j < \delta$, we have that the sequence  
$$
\langle \suc_{T_i}(\nu) : i < \delta \rangle
$$ 
is a $\subseteq$-decreasing sequence of sets in $P_{\lh(\nu)}$. 
Since $\delta < \mu_{\lh(\nu)}$, 
Assumption 6.10(2) implies that there is $A \in P_{\lh(\nu)}$ such that 
$A \subseteq \bigcap \{ \suc_{T_i}(\nu) : i < \delta \}$. 
In particular, this intersection is in $I_{\lh(\nu)}^+$. 
Therefore, 
$$
\suc_{T}(\nu) = \bigcap \{ \suc_{T_i}(\nu) : i < \delta \} 
\in I_{\lh(\nu)}^+.
$$

It follows that $T$ is a condition and $T \le^* T_i$ for all $i < \delta$. 
Now apply Lemma 6.12 to find $W \in \p'$ such that $W \le^* T$. 
Then $W \le^* T_i$ for all $i < \delta$.
\end{proof}

\begin{proposition}
Let $\kappa := \lim \inf \{ \mu_n : n < \omega \}$. 
Then $\p$ does not add any bounded subsets of $\kappa$.
\end{proposition}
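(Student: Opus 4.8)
The plan is to prove the stronger statement that for every $S \in \p$ and every $\p$-name $\dot A$ for a bounded subset of $\kappa$ there is $T \le S$ forcing $\dot A \in \check V$; since for a fixed $\dot A$ the set of such $T$ is then dense, this shows $\Vdash \dot A \in \check V$, and as every bounded subset of $\kappa$ in a generic extension is named by some such $\dot A$, the proposition follows. So fix $S$ and $\dot A$, and let $\lambda < \kappa$ be a limit ordinal with $S \Vdash \dot A \subseteq \check \lambda$ (any bounded subset of $\kappa$ lies in such a $\lambda$, since $\kappa$ is a cardinal). As $\lambda < \kappa = \lim\inf\{\mu_n : n < \omega\}$, fix $m < \omega$ with $\lambda < \mu_n$ for all $n \ge m$; recall also that $\mu_n \le \kappa_n$ for each $n$.

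First I would pass to a convenient starting condition. By extending the stem of $S$ so that it has length at least $m$ and then applying Lemma 6.12, fix $T_0 \in \p'$ with $T_0 \le S$ whose stem $\eta$ has length at least $m$; note that $T_0 \Vdash \dot A \subseteq \check\lambda$.

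The core of the argument is to build a $\le^*$-descending sequence $\langle T_\beta : \beta \le \lambda \rangle$ of conditions in $\p'$, all with stem $\eta$, such that for each $\beta < \lambda$ the condition $T_{\beta+1}$ decides the statement ``$\check\beta \in \dot A$''. At a successor step, given $T_\beta \in \p'$, apply Corollary 6.6 to obtain $T' \le^* T_\beta$ deciding ``$\check\beta \in \dot A$'', and then apply Lemma 6.12 to obtain $T_{\beta+1} \in \p'$ with $T_{\beta+1} \le^* T'$; since $T_{\beta+1} \le T'$ it still decides ``$\check\beta \in \dot A$'', and its stem is still $\eta$ (and $T_{\beta+1} \le^* T_\beta$ by transitivity of $\le^*$). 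At a limit stage $\delta \le \lambda$, since $\delta \le \lambda < \mu_n$ for all $n \ge m$ and $\langle T_\beta : \beta < \delta \rangle$ is a $\le^*$-descending sequence in $\p'$ whose first term $T_0$ has stem of length at least $m$, Lemma 6.13 yields $T_\delta \in \p'$ with $T_\delta \le^* T_\beta$ for all $\beta < \delta$ (this is where we use that $\lambda$ itself is a limit ordinal, so that $T_\lambda$ arises in this way). Finally, set $T := T_\lambda$. Then $T \le S$, and for each $\beta < \lambda$ we have $T \le T_{\beta+1}$, so $T$ decides ``$\check\beta \in \dot A$''. Letting $B := \{\beta < \lambda : T \Vdash \check\beta \in \dot A\}$, which is a set in $V$, we get $T \Vdash \dot A = \check B$, so $T \Vdash \dot A \in \check V$, as desired.

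The one genuinely delicate point is the applicability of Lemma 6.13 at the limit stages: this is exactly why $m$ was chosen with $\lambda < \mu_n$ for $n \ge m$ and why the stem of $T_0$ was made to have length at least $m$, and it is the reason the bound in the statement is $\lim\inf\{\mu_n : n < \omega\}$ rather than something larger. A secondary bookkeeping matter is that the pure-decision step (Corollary 6.6) need not produce a condition in $\p'$, so one must reapply Lemma 6.12 after each decision while keeping the sequence $\le^*$-descending with a fixed stem; everything else is routine.
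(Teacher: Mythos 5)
Your proposal is correct and follows essentially the same route as the paper's proof: reduce to a limit ordinal $\lambda < \kappa$ bounding the name, pass to a condition in $\p'$ whose stem has length at least $m$ where $\lambda < \mu_n$ for $n \ge m$, and build a $\le^*$-descending $\lambda$-sequence in $\p'$ deciding membership of each $\beta < \lambda$ via Corollary 6.6 and Lemma 6.12 at successors and Lemma 6.13 at limits, with the final condition forcing the name equal to a ground-model set. The only cosmetic difference is your explicit density framing at the start, which the paper leaves implicit.
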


\begin{proof}
Let $\lambda < \kappa$ be a limit ordinal, $S \in \p$, and 
suppose that $S \Vdash \dot a \subseteq \lambda$. 
We will find a condition below $S$ which decides $\dot a$.

Fix $m$ such that for all $n \ge m$, $\lambda < \mu_n$, and moreover, 
$m$ is greater than or equal to the length of the stem of $S$. 
We will define a $\le^*$-descending sequence 
$\langle T_i : i \le \lambda \rangle$ of conditions in $\p'$. 

Fix $\eta \in S$ such that $\lh(\eta) = m$, and fix $T_0 \in \p'$ such that 
$T_0 \le^* S_\eta$. 
Note that $\eta$ is the stem of $T_0$.

Suppose that $\beta < \lambda$ and 
$\langle T_i : i \le \beta \rangle$ is defined. 
Applying Corollary 6.6 and Lemma 6.12, 
fix a condition $T_{\beta+1}$ in $\p'$ such that 
$T_{\beta+1} \le^* T_\beta$ and $T_{\beta+1}$ decides 
whether or not $\beta$ is in $\dot a$.

Assume that $\delta \le \lambda$ is a limit ordinal and 
$\langle T_i : i < \delta \rangle$ is defined. 
Recall that for all $n \ge m$, $\delta \le \lambda < \mu_n$, and 
the length of the stem of $T_0$ is equal to $m$. 
By Lemma 6.13, we can fix $T_\delta \in \p'$ such that for all 
$i < \delta$, $T_\delta \le^* T_i$.

This completes the construction of the sequence. 
Let 
$$
b := \{ \beta < \lambda : T_{\beta+1} \Vdash \beta \in \dot a \}.
$$
Then $T_\lambda \le S$ and $T_\lambda$ forces that $\dot a$ is equal to $b$.
\end{proof}

We now turn to showing that the forcing poset $\p$ has the 
weak $\mu$-approximation property, for all regular uncountable 
cardinals $\mu < \lim \inf \{ \mu_n : n < \omega \}$. 
This will follow easily from the next proposition, which describes 
a stronger property of $\p$.

\begin{proposition}
Suppose that $\mu$ is a regular uncountable cardinal and 
$\mu < \lim \inf \{ \mu_n : n < \omega \}$. 
Then $\p$ forces that whenever $\langle a_i : i < \mu \rangle$ 
is a sequence of sets such that for all $i < \mu$, $a_i \in V$, 
then there is an unbounded set 
$X \subseteq \mu$ such that the sequence 
$\langle a_i : i \in X \rangle$ is in $V$.
\end{proposition}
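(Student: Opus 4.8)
The plan is to fix a generic filter $G$ and a sequence $\langle a_i : i < \mu \rangle \in V[G]$ with $a_i \in V$ for all $i < \mu$, fix $\p$-names $\langle \dot a_i : i < \mu \rangle$ for it, and reduce to the following: whenever a condition $S$ forces that $\dot a_i \in V$ for every $i < \mu$, there is $T \le S$ forcing that $\langle \dot a_i : i \in X \rangle \in V$ for some unbounded $X \subseteq \mu$. So fix such an $S$. Using Lemma 6.12 and replacing $S$ by $S_\eta$ for a node $\eta$ of length $m$, I may assume that $S \in \p'$ and that the stem of $S$ has length $m$, where $m$ is chosen so large that $\mu_n > \mu$ for all $n \ge m$; such an $m$ exists because $\mu < \lim \inf \{ \mu_n : n < \omega \}$.

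Working in $V$, I would then build by recursion a $\le^*$-decreasing sequence $\langle T_i : i \le \mu \rangle$ of conditions in $\p'$ with $T_0 = S$, together with natural numbers $n_i \ge m$ for $i < \mu$, so that every node $\nu \in T_{i+1}$ with $\lh(\nu) = n_i$ has the property that $(T_{i+1})_\nu$ forces $\dot a_i = \check b$ for some ground-model set $b$. At a successor stage I apply Lemma 6.4 to $T_i$ and the dense open set $D_i$ of conditions that are incompatible with $S$, or are below $S$ and force a ground-model value for $\dot a_i$; this yields $W \le^* T_i$ and $n_i \ge m$ such that $W_\nu \in D_i$ for every node $\nu \in W$ of length $n_i$, and since each such $W_\nu$ lies below $S$ it in fact forces a ground-model value for $\dot a_i$. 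Lemma 6.12 then gives $T_{i+1} \le^* W$ with $T_{i+1} \in \p'$, and the displayed property is inherited by $T_{i+1}$ since $T_{i+1} \subseteq W$ and $D_i$ is downward closed. At a limit stage $\delta \le \mu$, each $T_i$ for $i < \delta$ has stem of length $m$ and $\delta \le \mu < \mu_n$ for all $n \ge m$, so Lemma 6.13 (via Assumption 6.10) provides $T_\delta \in \p'$ with $T_\delta \le^* T_i$ for all $i < \delta$. Put $T := T_\mu$; then $T \le^* S$, and since $T \le^* T_{i+1}$ for every $i < \mu$, every node $\nu$ of $T$ with $\lh(\nu) = n_i$ satisfies $T_\nu \Vdash \dot a_i = \check b^i_\nu$ for a uniquely determined $b^i_\nu \in V$.

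The whole recursion is carried out in $V$, so $\langle n_i : i < \mu \rangle \in V$. Since $\mu$ is regular and uncountable, I can fix in $V$ a single $n^* < \omega$, necessarily $n^* \ge m$, and a set $X \in V$ with $X \subseteq \mu$ of cardinality $\mu$, hence unbounded in $\mu$, such that $n_i = n^*$ for all $i \in X$. Let $\Sigma$ be the set of nodes of $T$ of length $n^*$; then $\{ T_\nu : \nu \in \Sigma \}$ is a maximal antichain below $T$, because any extension of $T$ lies below $T_\nu$ for the $\nu \in \Sigma$ which is an initial segment of its stem. For $\nu \in \Sigma$ and $i \in X$ we have $\lh(\nu) = n^* = n_i$, so $b^i_\nu$ is defined, and the function $F$ with domain $\Sigma$ given by $F(\nu) = \langle b^i_\nu : i \in X \rangle$ lies in $V$, as $T$, $X$, $\Sigma$ and the names $\dot a_i$ are all in $V$. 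Finally, in $V[G]$, if $T \in G$ then $G$ contains $T_{\nu_G}$ for a unique $\nu_G \in \Sigma \subseteq V$, and then $a_i = b^i_{\nu_G}$ for every $i \in X$; hence $\langle a_i : i \in X \rangle = F(\nu_G) \in V$, as required.

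I expect the main obstacle to be the construction of the $\le^*$-decreasing sequence of length $\mu$, and the realization that, unlike in Proposition 6.15, a single $\le^*$-extension cannot be expected to decide all of the names $\dot a_i$ (the $a_i$ need not be bounded subsets of any fixed $\kappa_n$). The construction instead only forces each $\dot a_i$ to be decided level by level along $T$, and a genuine ground-model value for the entire subsequence is recovered from the generically selected node $\nu_G$ of the antichain $\Sigma$. The recursion of length $\mu$ is precisely what forces the use of $\le^*$-limits at all stages up to and including $\mu$, which is exactly where Assumption 6.10, through Lemma 6.13 and the hypothesis $\mu < \lim \inf \{ \mu_n : n < \omega \}$, enters the argument.
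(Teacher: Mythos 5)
Your proposal is correct and is essentially the paper's proof: the same reduction to a density argument, the same $\le^*$-descending recursion of length $\mu$ through $\p'$ built from Lemmas 6.4 and 6.12 at successors and Lemma 6.13 (hence Assumption 6.10 and the hypothesis $\mu < \lim\inf\{\mu_n : n<\omega\}$) at limits, followed by using the regularity of $\mu$ to stabilize the lengths $n_i$ as a single $n^*$ on an unbounded $X \in V$. The only difference is cosmetic: the paper finishes by fixing one node $\xi$ of $T$ of length $n^*$ and noting that $W := T_\xi \le S$ forces $\langle \dot a_i : i \in X\rangle$ to equal a fixed ground-model sequence, whereas you show the (slightly stronger, equally sufficient) fact that $T$ itself forces the subsequence into $V$ via the antichain $\{T_\nu : \nu \in \Sigma\}$. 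One small repair there: your stated reason for maximality is off, since an extension $U$ of $T$ may have stem of length less than $n^*$ and then lies below no $T_\nu$; argue predensity instead, exactly as in Lemma 6.3 --- pick any node $\nu \in U$ of length $n^*$ and note $U_\nu \le U$ and $U_\nu \le T_\nu$ --- which is all the genericity argument requires.
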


\begin{proof}
Fix $m < \omega$ such that for all $n \ge m$, $\mu < \mu_n$. 
Suppose that $S$ forces that $\langle \dot a_i : i < \mu \rangle$ 
is a sequence such that for all $i < \mu$, $\dot a_i \in V$. 

We will define a $\le^*$-descending sequence 
$\langle T_i : i \le \mu \rangle$ of conditions in $\p'$. 
Fix $\eta \in S$ which extends the stem of $S$ and has length 
at least $m$. 
Fix $T_0 \in \p'$ such that $T_0 \le^* S_\eta$. 
Then the length of the stem of $T_0$ is at least $m$.

Let $\beta < \mu$, and suppose that 
$\langle T_i : i \le \beta \rangle$ is defined. 
Let $D_\beta$ be the dense open set of conditions which are either 
incompatible with $T_\beta$, or below $T_\beta$ and 
decide the value of $\dot a_\beta$. 
By Lemmas 6.4 and 6.12, we can fix $T_{\beta+1} \le^* T_\beta$ 
in $\p'$ and 
$n_\beta < \omega$ which is greater than or equal 
to the length of the stem of $T_\beta$ such that for any 
$\nu \in T_{\beta+1}$ with $\lh(\nu) = n_\beta$, 
$(T_{\beta+1})_\nu \in D_\beta$.

Assume that $\delta \le \mu$ is a limit ordinal and the sequence 
$\langle T_i : i < \delta \rangle$ is defined. 
Then for all $n \ge m$, $\delta \le \mu < \mu_n$. 
Also, the length of the stem of $T_0$ is at least $m$. 
By Lemma 6.13, fix $T_\delta \in \p'$ such that 
$T_\delta \le^* T_i$ for all $i < \delta$.

This completes the construction of the sequence 
$\langle T_i : i \le \mu \rangle$. 
Let $T := T_\mu$. 
Then $T \le S$.

For each $\beta < \mu$, $n_\beta < \omega$. 
Since $\mu$ is regular and uncountable, we can find $n < \omega$ 
such that the set $X := \{ \beta < \mu : n_\beta = n \}$ 
is unbounded in $\mu$.

Fix $\xi \in T$ such that $\lh(\xi) = n$, and let $W := T_\xi$. 
Then $W \le S$. 
We claim that $W$ forces that the sequence 
$\langle \dot a_i : i \in X \rangle$ is in $V$.

Consider $\beta \in X$. 
Since $\xi \in T$, $\xi \in T_{\beta+1}$ and $\lh(\xi) = n = n_\beta$. 
By the choice of $T_{\beta+1}$ and $n_\beta$, 
$(T_{\beta+1})_\xi \in D_\beta$. 
By the definition of $D_\beta$ and since 
$(T_{\beta+1})_\xi \le T_\beta$, there is a set $b_\beta$ 
such that $(T_{\beta+1})_\xi \Vdash \dot a_\beta = \check{b}_\beta$. 
But $T \le T_{\beta+1}$ implies that 
$W = T_\xi \le (T_{\beta+1})_\xi$. 
Hence, $W \Vdash \dot a_\beta = \check{b}_\beta$. 

It follows that $W$ forces that the sequence 
$\langle \dot a_\beta : \beta \in X \rangle$ 
is equal to the sequence $\langle b_\beta : \beta \in X \rangle$. 
Since the latter sequence is in $V$, we are done.
\end{proof}

\begin{corollary}
Suppose that $\mu$ is a regular uncountable cardinal such that  
$\mu < \lim \inf \{ \mu_n : n < \omega \}$. 
Then $\p$ has the weak $\mu$-approximation property.
\end{corollary}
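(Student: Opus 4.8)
The plan is to read this off almost immediately from Proposition 6.16, which carries all the real content; the corollary is essentially a translation of that statement into the language of the weak approximation property.

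First I would fix a regular uncountable cardinal $\mu < \lim \inf \{ \mu_n : n < \omega \}$, let $G$ be a $V$-generic filter on $\p$, and work in $V[G]$. Let $f : \mu \to On$ be a function such that $f \restriction \alpha \in V$ for every $\alpha < \mu$; the goal is to show that $f \in V$. To bring Proposition 6.16 to bear I would consider the sequence $\langle a_i : i < \mu \rangle$ given by $a_i := f \restriction i$. By hypothesis each $a_i$ lies in $V$, so $\langle a_i : i < \mu \rangle$ is exactly the kind of sequence to which Proposition 6.16 applies (this is the only place the assumption $\mu < \lim \inf \{ \mu_n : n < \omega \}$ is needed). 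Proposition 6.16 then yields an unbounded set $X \subseteq \mu$ such that the subsequence $\langle a_i : i \in X \rangle$ lies in $V$.

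The last step is to recover $f$ from this ground-model sequence. Since $X$ is unbounded in $\mu$, we have $f = \bigcup \{ f \restriction i : i \in X \} = \bigcup \{ a_i : i \in X \}$, and the right-hand side is definable in $V$ from the sequence $\langle a_i : i \in X \rangle$, so $f \in V$. As $G$ was an arbitrary generic filter, $\p$ forces that $(V, V^\p)$ has the weak $\mu$-approximation property, which is precisely the assertion that $\p$ has the weak $\mu$-approximation property. I do not expect any real obstacle here: beyond citing Proposition 6.16, the only points to verify are the routine facts that the union of a $V$-sequence is again in $V$ and that the unboundedness of $X$ forces that union to be all of $f$.
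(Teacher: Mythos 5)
Your proposal is correct and follows essentially the same route as the paper: apply the preceding proposition (Proposition 6.15 in the paper's numbering, not 6.16) to the sequence of initial segments $\langle f \restriction i : i < \mu \rangle$, then recover $f$ as the union of the ground-model subsequence indexed by the unbounded set $X$. The only cosmetic difference is that you argue semantically in $V[G]$ while the paper phrases the same argument with a condition $T$ and names, concluding $T \Vdash \dot f = \check g$.
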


\begin{proof}
Suppose that $S$ forces that $\dot f : \mu \to On$ is a function 
such that for all $\alpha < \mu$, $\dot f \restriction \alpha \in V$. 
Consider the sequence 
$\langle \dot f \restriction \alpha : \alpha < \mu \rangle$. 
Then $S$ forces that every member of this sequence is in $V$. 
By Proposition 6.15, there exist $T \le S$, an unbounded set 
$X \subseteq \mu$, and a sequence 
$\langle g_\alpha : \alpha \in X \rangle$ such that 
$T$ forces that $\dot f \restriction \alpha = g_\alpha$ for all $\alpha \in X$. 
In particular, for each $\alpha \in X$, $g_\alpha$ a function from 
$\alpha$ to $On$, and for all $\alpha < \beta$ in $X$, 
$g_\alpha = g_\beta \restriction \alpha$. 
It follows that 
$g := \bigcup \{ g_\alpha : \alpha \in X \}$ is a total function on $\mu$ 
and $T \Vdash \dot f = \check g$.
\end{proof}

\section{The main theorem}

We are now ready to complete the main result of the paper, which is to 
construct a model in which there are stationarily many 
$N \in P_{\omega_2}(H(\aleph_{\omega+1}))$ 
such that $N$ is indestructibly weakly guessing, has uniform cofinality $\omega_1$, 
and is not internally unbounded.

We will use the following well-known facts.

\begin{thm}[Larson]
Martin's maximum is preserved after forcing with any 
$\omega_2$-directed closed forcing poset.
\end{thm}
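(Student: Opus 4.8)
The plan is to deduce the theorem from \textsf{MM} in the ground model by a two-step-iteration absorption argument. Work in $V$, assume \textsf{MM}, and let $\p$ be an $\omega_2$-directed closed forcing poset. To conclude that $\p$ forces \textsf{MM}, it suffices to show that there is no condition $q \in \p$ together with names $\dot \q$ and $\langle \dot D_\alpha : \alpha < \omega_1 \rangle$ such that $q$ forces that $\dot \q$ is stationary set preserving, each $\dot D_\alpha$ is dense in $\dot \q$, and no filter on $\dot \q$ meets every $\dot D_\alpha$. Suppose toward a contradiction that such a $q$, $\dot \q$, and $\langle \dot D_\alpha : \alpha < \omega_1 \rangle$ exist.

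First I would pass, in $V$, to the two-step iteration $\R := \p_{\le q} * \dot \q$, where $\p_{\le q}$ is $\p$ below $q$. Since $\p$ is $\omega_2$-directed closed it is in particular $\omega_1$-closed, hence proper and so stationary set preserving; as $q$ forces $\dot \q$ to be stationary set preserving, and stationary set preservation is closed under two-step iteration, $\R$ is stationary set preserving in $V$. For each $\alpha < \omega_1$ put
$$
E_\alpha := \{ (r,\dot d) \in \R : r \Vdash \dot d \in \dot D_\alpha \}.
$$
Using that $q$ forces $\dot D_\alpha$ to be dense in $\dot \q$, one checks that $E_\alpha$ is dense in $\R$. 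Now I would apply \textsf{MM} in $V$ to $\R$ and the family $\langle E_\alpha : \alpha < \omega_1 \rangle$ of dense sets, obtaining a filter $F$ on $\R$ with $F \cap E_\alpha \neq \emptyset$ for every $\alpha$; fix $(r_\alpha, \dot d_\alpha) \in F \cap E_\alpha$ for each $\alpha$.

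Next I would shrink $F$ to a small directed piece. Build a $\subseteq$-increasing chain $\langle F_\xi : \xi < \omega_1 \rangle$ of subsets of $F$, each of size at most $\omega_1$, by setting $F_0 := \{ (r_\alpha, \dot d_\alpha) : \alpha < \omega_1 \}$, adjoining at each successor stage a common lower bound in $F$ for every pair of elements of the current set (such a lower bound exists since $F$ is a filter), and taking unions at limits. Then $F^* := \bigcup_{\xi < \omega_1} F_\xi$ is a directed subset of $\R$ of size at most $\omega_1$ containing every $(r_\alpha, \dot d_\alpha)$. Let $\bar F := \{ r : \exists \dot d \ (r, \dot d) \in F^* \}$. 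Since $F^*$ is directed, $\bar F$ is a directed subset of $\p_{\le q}$, and $|\bar F| \le \omega_1 < \omega_2$, so by the $\omega_2$-directed closure of $\p$ there is $p_\infty \in \p$ with $p_\infty \le r$ for all $r \in \bar F$; in particular $p_\infty \le q$. Let $G$ be $\p$-generic over $V$ with $p_\infty \in G$; then $\bar F \subseteq G$ and $q \in G$. In $V[G]$, put $\q := \dot \q^G$, $D_\alpha := \dot D_\alpha^G$, and
$$
H := \{ \dot d^G : \exists r \in G \ (r, \dot d) \in F^* \}.
$$
Using the directedness of $F^*$ and $\bar F \subseteq G$, $H$ is a directed subset of $\q$, hence generates a filter on $\q$; and for each $\alpha$, since $r_\alpha \in \bar F \subseteq G$ and $r_\alpha \Vdash \dot d_\alpha \in \dot D_\alpha$, we get $\dot d_\alpha^G \in H \cap D_\alpha$. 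So $V[G]$ contains a filter on $\q$ meeting every $D_\alpha$, which contradicts the choice of $q$ (together with $q \in G$). This contradiction would complete the proof.

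The step I expect to be the crux is the one in the previous paragraph. The filter $F$ handed back by \textsf{MM} in $V$ lives on the iteration $\R$, and its projection to the $\p$-coordinate need not be contained in any $V$-generic filter; what saves the argument is that this projection is merely a \emph{directed} set of size $\le \omega_1$, which is exactly the kind of object that $\omega_2$-directed closure turns into a single condition $p_\infty$. Forcing $p_\infty$ into a genuine generic $G$ then ``revives'' the $\dot \q$-coordinate of $F$ as a filter meeting all the prescribed dense sets. This is precisely where directed closure, rather than mere $\omega_2$-closure, is essential, since the projected filter is directed but in general not well-ordered by $\le$.
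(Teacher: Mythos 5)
Your proof is correct. The paper does not prove this theorem itself but simply cites Larson, and your argument --- absorbing the name $\dot{\q}$ into the two-step iteration $\p_{\le q} * \dot{\q}$, applying \textsf{MM} in $V$ to that stationary set preserving iteration, and then using $\omega_2$-directed closure to find a single condition $p_\infty$ below the ($\le\omega_1$-sized, directed) $\p$-projection of the filter so that the $\dot{\q}$-coordinate revives as a filter meeting all the prescribed dense sets --- is essentially the standard proof from the cited source.
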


\begin{proof}
See \cite[Theorem 4.3]{larson}.
\end{proof}

\begin{thm}[Laver]
Let $\mu < \kappa$, where $\mu$ is regular uncountable and $\kappa$
is a measurable cardinal. 
Then $\coll(\mu,<\! \kappa)$ forces that there exists a $\kappa$-complete 
uniform ideal $I$ on $\kappa = \mu^+$ and a set $P \subseteq I^+$ satisfying:
\begin{enumerate}
\item for all $A \in I^+$, there is $B \in P$ such that $B \subseteq A$;
\item whenever 
$\langle B_i : i < \delta \rangle$ is a $\subseteq$-decreasing sequence 
of sets in $P$, where $\delta < \mu$, then 
$\bigcap \{ B_i : i < \delta \} \in I^+$.
\end{enumerate}
\end{thm}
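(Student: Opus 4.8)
The plan is to realize $I$ as the ideal induced on $\kappa$ by lifting an ultrapower embedding of $V$ through the L\'evy collapse, so that the quotient $P(\kappa)^{V[G]}/I$ is forcing-equivalent to the tail of the collapse; the dense $<\mu$-closed set $P$ will then be read off from that tail, which is $<\mu$-closed because $\mu$ is regular. In detail: work in $V$, fix a normal ultrafilter $U$ on $\kappa$, and let $j : V \to M \cong \ult(V,U)$, so $\crit(j) = \kappa$, $j(\kappa) > \kappa$ and ${}^{\kappa}M \subseteq M$. Put $\mathrm{Col} := \coll(\mu,<\!\kappa)$. Since every condition of $\mathrm{Col}$ is a function of size $<\mu$ with ordinal values bounded below $\kappa$, we get $j(p) = p$ for all $p \in \mathrm{Col}$; together with ${}^{\kappa}M \subseteq M$ and elementarity this gives the factorization $j(\mathrm{Col}) = \mathrm{Col} \times \s$, where $\s := \coll(\mu,[\kappa,j(\kappa)))^{M}$, $\mathrm{Col}$ is a complete subposet of $j(\mathrm{Col})$, and $j \restriction \mathrm{Col} = \mathrm{id}$. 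As $\mu$ is regular, $\s$ is $<\mu$-closed; as $\mathrm{Col}$ is $<\mu$-closed, the models $V$, $M$, $V[G]$, $M[G]$ agree on $<\mu$-sequences of ordinals for any $V$-generic $G \subseteq \mathrm{Col}$, so $\s$ remains $<\mu$-closed in $M[G]$. Finally, $\mathrm{Col}$ is $\kappa$-c.c.\ (as $\kappa$ is inaccessible), so $\kappa = \mu^{+}$ in $V[G]$.

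Fix a $V$-generic $G \subseteq \mathrm{Col}$ and work in $V[G]$. For $A \subseteq \kappa$ choose a $\mathrm{Col}$-name $\dot A \in V$ for $A$, and let $j(\dot A)_{G}$ denote the $\s$-name in $M[G]$ obtained by interpreting the $j(\mathrm{Col}) = \mathrm{Col} \times \s$-name $j(\dot A)$ by $G$ on the $\mathrm{Col}$-coordinate. Define
$$
A \in I \ \iff \ \Vdash^{M[G]}_{\s}\ \check\kappa \notin j(\dot A)_{G}.
$$
Because some $p \in G$ forces any equality of two names for $A$ and $j(p) = p$, this is independent of the choice of $\dot A$. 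Elementarity of $j$ together with $j \restriction \kappa = \mathrm{id}$ (so $j$ commutes with unions of $<\kappa$-sequences of subsets of $\kappa$) shows that $I$ is a $\kappa$-complete ideal; it is proper since $\Vdash_{\s} \check\kappa \in j(\kappa)$, and uniform since any $A \in V[G]$ with $|A| \le \mu$ is bounded in $\kappa$ and hence forced by a condition in $G$ to be sent by $j$ into some $\check\gamma$ with $\gamma < \kappa$.

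The heart of the argument is the following ``self-genericity'' statement: forcing with $\s$ over $V[G]$ lifts $j$ to an elementary $j^{+} : V[G] \to M[G][H]$ (where $H$ is the $\s$-generic; this uses only $j[G] = G$ and the factorization), and every element of $M[G][H]$ has the form $j^{+}(f)(\kappa)$ for some $f : \kappa \to V[G]$ in $V[G]$ --- that is, $M[G][H]$ is the ultrapower of $V[G]$ by $U_{H} := \{A \in V[G] : \kappa \in j^{+}(A)\}$, and $U_{H}$ is $V[G]$-generic for $P(\kappa)^{V[G]}/I$. This should be proved by the standard representation: an element of $M[G]$ has a name $\tau \in M = \ult(V,U)$, write $\tau = j(g)(\kappa)$ with $g \in V$ and (by elementarity) $g(\alpha)$ a $\mathrm{Col}$-name for $U$-almost every $\alpha$, so $\tau_{G} = j^{+}(\langle g(\alpha)_{G} : \alpha < \kappa\rangle)(\kappa)$; the $\s$-coordinate is handled the same way. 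I expect this self-genericity fact to be the main obstacle; everything else is bookkeeping. Granting it, the map
$$
\Phi : P(\kappa)^{V[G]}/I \ \longrightarrow\ \mathbb{B}(\s)^{M[G]}, \qquad [A]\ \longmapsto\ \| \check\kappa \in j(\dot A)_{G} \|^{M[G]}_{\s}
$$
(the Boolean value computed in $M[G]$, in the Boolean completion of $\s$) is a well-defined order-embedding which reflects order --- since $[A] \le [B]$ iff $A \setminus B \in I$ iff $\Phi([A]) \wedge \neg\Phi([B]) = 0$ --- and has dense range, exactly because $M[G][H]$ is the generic ultrapower.

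It remains to extract $P$. Put
$$
P := \{\, A \in I^{+} :\ \Phi([A]) = \|p\|_{\s}\ \text{for some}\ p \in \s \,\}.
$$
For density: given $A \in I^{+}$, $\Phi([A]) \ne 0$, so $\|p\| \le \Phi([A])$ for some $p \in \s$; as $\ran(\Phi)$ is dense, fix $B$ with $\Phi([B]) = \|p\|$, and then $B \cap A \subseteq A$ lies in $I^{+}$ and satisfies $\Phi([B \cap A]) = \|p\| \wedge \Phi([A]) = \|p\|$, so $B \cap A \in P$. For closure under $<\mu$-decreasing intersections: if $\langle B_{i} : i < \delta\rangle$ is $\subseteq$-decreasing in $P$ with $\delta < \mu$, then the witnesses $p_{i}$ (with $\Phi([B_{i}]) = \|p_{i}\|$) may be taken $\le$-decreasing in $\s$ by separativity; the sequence $\langle p_{i} : i < \delta\rangle$ lies in $M[G]$ by agreement on $<\mu$-sequences, so $<\mu$-closure of $\s$ in $M[G]$ yields $p_{\infty}$ with $p_{\infty} \le p_{i}$ for all $i$. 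Since $\kappa$-completeness of $I$ gives $[\bigcap_{i} B_{i}] = \bigwedge_{i}[B_{i}]$ in $P(\kappa)^{V[G]}/I$ and $\Phi$ reflects order and has dense range, we conclude $\Phi([\bigcap_{i}B_{i}]) \ge \|p_{\infty}\| \ne 0$, i.e.\ $\bigcap_{i} B_{i} \in I^{+}$. Hence $I$ and $P$ satisfy the conclusion.
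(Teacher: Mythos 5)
Your overall architecture is sound: it is essentially Laver's original argument, which the paper itself does not reprove --- the paper's proof of Theorem 7.2 is a citation of Foreman's handbook chapter (Theorem 7.6 there) for a $\kappa$-complete uniform ideal $I$ whose quotient $P(\kappa)/I$ has a dense $\mu$-closed subset, plus the remark that $\kappa$-completeness transfers this to a dense $\mu$-closed subset of $(I^{+},\subseteq)$. Your factorization $j(\coll(\mu,<\!\kappa)) \cong \coll(\mu,<\!\kappa)\times\mathbb{S}$, the definition and basic properties of the induced ideal, the lifting of $j$, and the final extraction of $P$ together with its closure under $<\mu$-decreasing intersections (via $<\mu$-closure of $\mathbb{S}$ in $M[G]$ and ${}^{\kappa}M\subseteq M$) are all correct in substance.

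The genuine gap is at the one step you yourself flag and then wave through: the claim that $\Phi$ ``has dense range, exactly because $M[G][H]$ is the generic ultrapower.'' That inference is not valid. If $\mathbb{S}$ forces $U_{\dot H}$ to be a $V[G]$-generic ultrafilter on $P(\kappa)^{V[G]}/I$, what follows is only that $\Phi$ is a \emph{complete} embedding of the quotient algebra into $\mathbb{B}(\mathbb{S})$, and complete embeddings need not have dense range; likewise, knowing that every element of $M[G][H]$ is $j^{+}(f)(\kappa)$ with $f\in V[G]$ (so each generic computes the other) is compatible with the embedding being merely complete rather than dense. Moreover, the genericity of $U_{\dot H}$, which you package into the same ``self-genericity'' statement, is not established by your representation sketch: representing elements of $M[G][H]$ by functions in $V[G]$ is the easy half, while genericity requires producing, below any condition of $\mathbb{S}$, an $I$-positive set whose $\Phi$-value lies below that condition --- which is the density statement itself. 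The missing concrete ingredient is the reflection of tail conditions to positive sets: given $p\in\mathbb{S}$, write $p=j(g)(\kappa)$ with $g\in V$ and $g(\alpha)\in\coll(\mu,[\alpha,\kappa))$ for $U$-almost every $\alpha$, and set $A_{p}:=\{\alpha<\kappa : g(\alpha)\in G\restriction[\alpha,\kappa)\}$, using $\coll(\mu,<\!\kappa)\cong\coll(\mu,<\!\alpha)\times\coll(\mu,[\alpha,\kappa))$. A short computation with the lifted embedding gives that $\kappa\in j^{+}(A_{p})$ holds iff $p\in H$, i.e.\ $\Phi([A_{p}])=\|p\|$, so in particular $A_{p}\in I^{+}$. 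This single computation yields at once the genericity of $U_{\dot H}$, the density of $\ran(\Phi)$, and (after intersecting with a given positive set) exactly the members of your $P$; note that no soft argument can avoid it, since any attempt to represent $\dot H$ or a condition $q\in\mathbb{S}$ canonically must pass to a function in $V$ (as $j^{+}(q)\neq q$), which is precisely the $A_{p}$ device. With this lemma inserted, the rest of your proof goes through.
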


\begin{proof}
By \cite[Theorem 7.6]{foreman}, there is a $\kappa$-complete 
uniform ideal $I$ on $\kappa = \mu^+$ 
such that the forcing poset $P(\kappa) / I$ has a dense, $\mu$-closed subset. 
A straightforward argument using the $\kappa$-completeness of $I$ shows 
that this implies that the forcing poset $(I^+,\subseteq)$ has a dense, $\mu$-closed subset.
\end{proof}

\begin{thm}[Hamkins]
If $\dot \q$ is an $\add(\omega)$-name for an $\omega_1$-closed 
forcing poset, then $\add(\omega) * \dot \q$ has the 
$\omega_1$-approximation property.
\end{thm}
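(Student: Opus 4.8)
The plan is to verify the $\omega_1$-approximation property directly. Let $\dot{d}$ be an $\add(\omega) * \dot{\q}$-name for a bounded subset of $V \cap On$, say $\dot{d} \subseteq \check{\lambda}$, and let $(p_0, \dot{q}_0)$ force that $\dot{d} \cap a \in V$ for every $a \in V$ which is countable in $V$. I want to conclude $(p_0, \dot{q}_0) \Vdash \dot{d} \in V$. Suppose toward a contradiction that $(p_1, \dot{q}_1) \le (p_0, \dot{q}_0)$ forces $\dot{d} \notin V$. I will produce a countable set $a \in V$ together with an extension of $(p_1, \dot{q}_1)$ forcing $\dot{d} \cap a \notin V$, contradicting the hypothesis on $(p_0, \dot{q}_0)$.

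The first ingredient is that any \emph{countable} forcing poset $\mathbb{R}$ has the $\omega_1$-approximation property. Indeed, if $r \Vdash_{\mathbb{R}} \dot{e} \notin V$ where $\dot{e}$ names a subset of $V \cap On$, enumerate $\{ r' \in \mathbb{R} : r' \le r \}$ as $\langle r_n : n < \omega \rangle$; since $r_n \Vdash \dot{e} \notin V$, the condition $r_n$ cannot decide ``$\beta \in \dot{e}$'' for every ordinal $\beta$, as otherwise it would force $\dot{e}$ equal to a ground model set, so fix $\beta_n$ not decided by $r_n$ and set $a := \{ \beta_n : n < \omega \} \in V$. If some $r' \le r$ forced $\dot{e} \cap \check{a} = \check{b}$ for $b \in V$, then writing $r' = r_n$ and noting $\beta_n \in a$, the condition $r_n$ would decide ``$\beta_n \in \dot{e}$'', a contradiction; hence $r \Vdash \dot{e} \cap \check{a} \notin V$. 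Applying this to $\add(\omega)$ and to each restriction $\add(\omega) \restriction p$ (all countable) gives the key dichotomy at any $(p, \dot{q}) \le (p_1, \dot{q}_1)$: \emph{either} $\dot{d}$ is, below $(p,\dot{q})$, read off the $\add(\omega)$-generic (some $\dot{\q}$-name $\dot{s} \le \dot{q}$ has $(p, \dot{s})$ forcing, over $\dot{\q}$, that $\dot{d}$ equals the interpretation of an $\add(\omega) \restriction p$-name $\dot{e}$) --- which is impossible, since such an $\dot{e}$ would name a new bounded subset of $V \cap On$ all of whose $V$-countable approximations lie in $V$ (both properties passing down from $(p_1, \dot{q}_1) \le (p_0, \dot{q}_0)$, as neither depends on the $\dot{\q}$-coordinate), contradicting the first ingredient --- \emph{or else} there are a $\dot{\q}$-name $\dot{s} \le \dot{q}$, an ordinal $\beta < \lambda$, and $\dot{\q}$-names $\dot{s}_0, \dot{s}_1 \le \dot{s}$ with $(p, \dot{s}_0) \Vdash \beta \in \dot{d}$ and $(p, \dot{s}_1) \Vdash \beta \notin \dot{d}$. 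So the splitting alternative always holds.

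The remaining step is a fusion that exploits both the countability of $\add(\omega)$ and the $\omega_1$-closure of $\dot{\q}$. Fix an enumeration of $\add(\omega) \restriction p_1$ in which every condition recurs cofinally often, and iterate the splitting from the dichotomy along a tree of $\dot{\q}$-names (with first coordinates drawn from that enumeration, refined as needed to pin down the splitting ordinals), passing at limit stages to a name, below $p_1$, for a common lower bound of each descending $\omega$-branch of $\dot{\q}$-names --- legitimate because $\dot{\q}$ is forced $\omega_1$-closed. Collecting the ordinals witnessing the splittings into a single countable $a \in V$, the bookkeeping is arranged so that at the bottom of the fusion, for every $p \le p_1$ and every $b \in V$, some extension of the relevant branch condition with first coordinate $p$ forces $\dot{d} \cap \check{a} \ne \check{b}$; the resulting condition then forces $\dot{d} \cap \check{a} \notin V$, giving the contradiction. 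The main obstacle --- and the heart of Hamkins' gap-forcing argument --- is exactly this last construction: because the Cohen coordinate is not closed one cannot settle facts about $\dot{d}$ by descending in $\dot{\q}$ alone, so one must interleave the countably many Cohen conditions with the $\omega_1$-closed limits of $\dot{\q}$ carefully enough that a single countable set $a$ defeats all ground model candidates for $\dot{d} \cap a$ simultaneously.
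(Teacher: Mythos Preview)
The paper does not prove this theorem at all; it simply cites Hamkins' paper (\cite[Lemma 13]{hamkins}) for the proof. So there is no argument in the paper to compare against, and your proposal is an attempt to reconstruct Hamkins' original proof rather than to match anything here.

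Your outline is faithful to Hamkins' gap-forcing argument and the two preliminary ingredients are correct: countable forcing has the $\omega_1$-approximation property for exactly the reason you give, and the dichotomy (either $\dot d$ is already determined by the Cohen coordinate, or there is a $\dot\q$-side splitting on some ordinal) is the right organizing principle. One small imprecision: as you state Alternative~2, you keep the Cohen coordinate fixed at $p$ while demanding a \emph{specific} ordinal $\beta \in V$. In general one must extend $p$ to pin down $\beta$ (the splitting ordinal is a priori only an $\add(\omega)$-name below $p$); you seem to recognize this when you write ``refined as needed to pin down the splitting ordinals'' in the fusion step, but the dichotomy paragraph should already reflect it.

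The genuine incompleteness is the fusion itself, which you candidly flag as ``the main obstacle.'' What you describe --- enumerating Cohen conditions with cofinal repetition, building a descending $\omega$-sequence of $\dot\q$-names while recording splitting ordinals into a countable $a \in V$, and using $\omega_1$-closure to pass to a limit --- is the right shape, but the bookkeeping that guarantees \emph{every} $p \le p_1$ and \emph{every} ground-model $b$ are eventually handled is exactly the content of Hamkins' Lemma~13, and you have not carried it out. So your proposal is a correct high-level summary of the intended proof with the crux honestly left as a sketch; since the paper itself only gives a citation, that is arguably more than what appears here, but it is not yet a proof.
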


\begin{proof}
See \cite[Lemma 13]{hamkins}.
\end{proof}

We start with a ground model $V$ in which there is a supercompact cardinal 
$\kappa$ and an increasing sequence 
$\langle \kappa_n : n < \omega \rangle$ of measurable 
cardinals which are above $\kappa$.
Let $\kappa_{-1} := \omega_1$.

Let $\mathbb M$ be the standard forcing poset 
which collapses $\kappa$ to become $\omega_2$ and 
forces Martin's maximum (see \cite[Section 1]{fms}). 
Let $G$ be a generic filter on $\mathbb{M}$. 
Then in $V[G]$, $\kappa = \omega_2$ and Martin's maximum holds. 
Since $\mathbb{M}$ has size $\kappa$ in $V$, in $V[G]$ we still have that 
$\kappa_n$ is measurable for all $n < \omega$. 

In $V[G]$, let $\langle \p_n, \dot \q_m : n \le \omega, \ m < \omega \rangle$ 
be the full support forcing iteration such that for all $n < \omega$, 
$$
\Vdash_{\p_n} \dot \q_n = \coll(\kappa_{n-1}^+,<\! \kappa_n).
$$
By standard arguments, $\p_\omega$ is $\omega_2$-directed closed, and 
for each $n < \omega$, $\p_\omega$ forces that 
$\kappa_n = \omega_{2n+3}$.

Let $H$ be a $V[G]$-generic filter on $\p_\omega$. 
Since $\p_\omega$ is $\omega_2$-directed closed, Theorem 7.1 implies 
that Martin's maximum holds in $V[G][H]$.

Consider $n < \omega$. 
Then in $V[G]$, 
$\p_\omega$ is forcing equivalent to a three-step forcing iteration of the form 
$$
\p_n * \coll(\kappa_{n-1}^+,<\! \kappa_n) * \p^n,
$$
where $|\p_n| < \kappa_n$, and $\p^n$ is forced to be $\kappa_n^+$-closed. 
Let $H_n * H(n) * H^n$ be a $V[G]$-generic filter for the above forcing poset 
such that 
$$
V[G][H] = V[G][H_n][H(n)][H^n].
$$

Since $\p_n$ has size less than $\kappa_n$, in $V[G][H_n]$, $\kappa_n$ is 
a measurable cardinal. 
And $H(n)$ is a $V[G][H_n]$-generic filter on 
$\coll(\kappa_{n-1}^+,<\! \kappa_n) = \coll(\omega_{2n+2},<\! \kappa_n)$. 
By Theorem 7.2, in $V[G][H_n][H(n)]$ 
there exists a $\kappa_n$-complete uniform ideal 
$I_n$ on $\kappa_n = \omega_{2n+3}$ 
and a set $P_n \subseteq I_n^+$ satisfying:
\begin{enumerate}
\item for all $A \in I_n^+$, there is $B \in P_n$ such that $B \subseteq A$;
\item whenever 
$\langle B_i : i < \delta \rangle$ is a $\subseteq$-decreasing sequence 
of sets in $P_n$, where $\delta < \omega_{2n+2}$, then 
$\bigcap \{ B_i : i < \delta \} \in I_n^+$.
\end{enumerate}
Finally, since $\p^n$ is $\kappa_n^+$-closed in $V[G][H_n][H(n)]$, 
it does not add any new subsets of $\kappa_n$, and therefore $I_n$ and $P_n$ 
satisfy exactly the same properties in the final model 
$V[G][H] = V[G][H_n][H(n)][H^n]$.

Let $W := V[G][H]$. 
Then $W$ satisfies the following statements:
\begin{enumerate}
\item Martin's maximum holds;
\item for all $n < \omega$, $\kappa_n = \omega_{2n+3}$;
\item for all $n < \omega$, $I_n$ is a $\kappa_n$-complete uniform ideal 
on $\kappa_n$;
\item for all $n < \omega$, $P_n \subseteq I_n^+$ satisfies 
\begin{enumerate}
\item for all $A \in I_n^+$, there is $B \in P_n$ such that $B \subseteq A$;
\item whenever 
$\langle B_i : i < \delta \rangle$ is a $\subseteq$-decreasing sequence 
of sets in $P_n$, where $\delta < \omega_{2n+2}$, then 
$\bigcap \{ B_i : i < \delta \} \in I_n^+$;
\end{enumerate}
\item $\lim \inf \{ \kappa_n : n < \omega \} = \lim \sup \{ \kappa_n : n < \omega \} 
= \aleph_\omega$.
\end{enumerate}

\bigskip

Working in the model $W$, we let $\p$ be the Namba forcing defined 
in Definition 6.1 using the sequence of cardinals 
$\langle \kappa_n : n < \omega \rangle = 
\langle \omega_{2n+3} : n < \omega \rangle$ 
and the sequence $\langle I_n : n < \omega \rangle$ of ideals just 
described. 
By Proposition 6.7, for any regular cardinal $\lambda > \aleph_\omega$, 
$\p$ forces that $\cf(\lambda) > \omega$.

Observe that Assumption 6.10 is satisfied, 
where we let $\mu_n := \omega_{2n+2}$ for all $n < \omega$. 
And $\lim \inf \{ \mu_n : n < \omega \} = \aleph_\omega$. 
It follows by Proposition 6.14 that $\p$ does not add any bounded subsets 
of $\aleph_\omega$. 
In particular, $\p$ preserves stationary subsets of $\omega_1$. 
By Corollary 6.16, $\p$ has the weak $\omega_n$-approximation property, 
for all $1 \le n < \omega$.

\bigskip

In $W^\p$ consider 
$$
\mathbb C := \add(\omega) * 
\coll(\omega_1,(2^{\aleph_{\omega+1}})^W).
$$
By Theorem 7.3 and Lemma 3.4, 
$\mathbb C$ has the weak $\omega_n$-approximation 
property in $W^\p$, for all $1 \le n < \omega$. 
It easily follows that $\p * \dot{\mathbb{C}}$ has the 
weak $\omega_n$-approximation property in $W$, for all $1 \le n < \omega$.
So $\p * \dot{\mathbb C}$ satisfies all of the assumptions 
of Proposition 5.4, where $\lambda = \aleph_{\omega+1}$ 
and $A = \{ \omega_n : 1 \le n < \omega \}$. 
Fix a set $w$ and a $\p * \dot{\mathbb C}$-name 
$\dot \q$ 
which satisfy the conclusion of Proposition 5.4.

Since $\mathbb C$ is proper in $W^\p$, $\p * \dot{\mathbb{C}}$ 
preserves stationary subsets of $\omega_1$. 
As $\dot \q$ is forced to be $\omega_1$-c.c., 
$\p * \dot{\mathbb{C}} * \dot \q$ 
preserves stationary subsets of $\omega_1$.

\bigskip

We are ready to complete the proof. 
Working in $W$, 
fix a regular cardinal $\chi > \aleph_{\omega+1}$ such that 
$w$ and $\p * \dot{\mathbb C} * \dot \q$ are in $H(\chi)$. 
Let $F : H(\aleph_{\omega+1})^{<\omega} \to H(\aleph_{\omega+1})$ 
be a function.

Using the fact that Martin's maximum holds in $W$, 
apply Theorem 5.1 to find a set 
$N \in P_{\omega_2}(H(\chi))$ such that 
$\omega_1 \subseteq N$, 
$N$ is closed under $F$, 
$N \prec (H(\chi),\in,\p*\dot{\mathbb{C}}*\dot \q,\aleph_{\omega+1},w)$, 
and there exists an $N$-generic filter $J$ on 
$\p*\dot{\mathbb{C}}*\dot \q$. 
Let $M := N \cap H(\aleph_{\omega+1})$.
Note that $M$ is closed under $F$.

By Proposition 5.4, $M$ is indestructibly weakly $\omega_n$-guessing, 
for all $1 \le n < \omega$. 
Hence, $M$ is indestructibly weakly guessing. 
Also note that since $\p*\dot{\mathbb{C}}*\dot \q$ forces that 
$\cf(\aleph_{\omega+1}) > \omega$, $\sup(M \cap \aleph_{\omega+1})$ 
has cofinality $\omega_1$ by an argument similar to the proof of Lemma 5.2. 
By Lemma 3.2, it follows that $M$ has uniform cofinality $\omega_1$. 
By Lemma 5.2 and Proposition 6.8, there is a countable subset of 
$M \cap \aleph_\omega$ which is not covered by any countable set in $M$. 
Since $\aleph_{\omega} \in M$, this set is bounded below 
$\sup(M \cap On)$.\footnote{Observe that 
by Lemma 5.3 and Proposition 6.8, $M$ is not $\omega_1$-guessing.}

\bigskip

There are many possible variations of the above construction. 
For example, let $\kappa$ be supercompact and $\lambda > \kappa$ 
be measurable. 
Let $G$ be a generic filter on the forcing poset 
$\mathbb{M}$ described above, 
and let $H$ be a $V[G]$-generic filter on $\coll(\omega_2,<\! \lambda)$. 
Then in $V[G][H]$, Martin's maximum holds and there is an 
$\omega_3$-complete uniform ideal 
$I$ on $\lambda = \omega_3$ and a set $P \subseteq I^+$ which 
satisfy Assumption 6.10 for $\mu = \omega_2$.

Let $\p$ be the Namba forcing from Definition 6.1, where we let 
$\kappa_n = \omega_3$ and $I_n = I$ for all $n < \omega$. 
Then $\p$ does not add any subsets of $\omega_1$, preserves $\omega_2$, 
changes the cofinality of $\omega_3$ to $\omega$, and preserves the 
uncountable cofinality of any regular cardinal greater than $\omega_3$.
Moreover, $\p$ has the weak $\omega_1$-approximation property.

Arguing as above, in $V[G][H]$ we have that for any large enough 
regular cardinal $\theta$, there are stationarily 
many $N \in P_{\omega_2}(H(\theta))$ such that $\omega_1 \subseteq N$, 
$N$ is indestructibly weakly $\omega_1$-guessing, 
$\cf(\sup(N \cap \omega_3)) = \omega$, 
and for all regular uncountable cardinals 
$\lambda \in N \cup \{ \theta \}$ different from 
$\omega_3$, $\cf(\sup(N \cap \lambda)) = \omega_1$.

As another example, assume that $\kappa < \lambda$, where $\kappa$ 
is supercompact and $\lambda$ is measurable. 
Let $G$ be a generic filter on $\mathbb{M}$. 
In $V[G]$, $\lambda$ is measurable, so we can let $I$ the dual ideal of a normal 
ultrafilter on $\lambda$. 
Let $\kappa_n := \lambda$ and $I_n := I$ for all $n < \omega$. 
Then in $V[G]$, 
we have that for any large enough 
regular cardinal $\theta$, there are stationarily 
many $N \in P_{\omega_2}(H(\theta))$ such that $\omega_1 \subseteq N$, 
$N$ is indestructibly weakly $\mu$-guessing for all regular uncountable cardinals 
$\mu \in N \cap \kappa$, $\cf(\sup(N \cap \lambda)) = \omega$, 
and for all regular uncountable cardinals 
$\nu \in N \cup \{ \theta \}$ different from 
$\lambda$, $\cf(\sup(N \cap \nu)) = \omega_1$.

\bigskip

We end the paper with several questions.
\begin{enumerate}
\item Does the existence of stationarily many indestructibly weakly guessing models which are 
not internally unbounded follow from Martin's maximum alone?
\item Does $\textsf{wGMP}$ imply $\textsf{GMP}$, or $\textsf{wIGMP}$ imply $\textsf{IGMP}$?
\item Viale \cite{viale} proved that the existence of stationarily many 
$\omega_1$-guessing models which are internally unbounded implies 
\textsf{SCH}. Does \textsf{SCH} follow from the existence of stationarily 
many weakly guessing models which are internally unbounded?
\item Is it consistent that there exists a forcing poset which has the 
$\omega_1$-approximation property, but does not have the countable 
covering property?
\item Is it consistent that Namba forcing on $\omega_2$ has 
the weak $\omega_1$-approximation property?
\end{enumerate}

\bibliographystyle{plain}
\bibliography{paper30}

\end{document}